\documentclass{amsart}

\usepackage[T1]{fontenc}
\usepackage[english]{babel}
\usepackage{amsmath}
\usepackage{amsthm}
\usepackage{amssymb}
\usepackage{amsfonts}
\usepackage{amsxtra}
\usepackage{color}
\usepackage[breaklinks]{hyperref}
\usepackage{enumerate}
\usepackage{multicol}

\numberwithin{equation}{section}

\newcommand{\td}{\,\mathrm{d}}
\newcommand{\Ad}{\textup{Ad}}
\newcommand{\ad}{\textup{ad}}

\newcommand{\id}{\textup{id}}

\newcommand{\Skew}{\textup{Skew}}

\newcommand{\GL}{\textup{GL}}
\newcommand{\gl}{\mathfrak{gl}}
\newcommand{\SL}{\textup{SL}}
\renewcommand{\sl}{\mathfrak{sl}}

\renewcommand{\sp}{\mathfrak{sp}}

\newcommand{\SO}{\textup{SO}}
\newcommand{\so}{\mathfrak{so}}

\newcommand{\su}{\mathfrak{su}}

\newcommand{\RR}{\mathbb{R}}
\newcommand{\KK}{\mathbb{K}}
\newcommand{\CC}{\mathbb{C}}
\newcommand{\ZZ}{\mathbb{Z}}
\newcommand{\NN}{\mathbb{N}}

\newcommand{\HH}{\mathbb{H}}
\newcommand{\OO}{\mathbb{O}}

\newcommand{\1}{\mathbf{1}}
\newcommand{\Ind}{\textup{Ind}}

\newcommand{\calO}{\mathcal{O}}

\newcommand{\calH}{\mathcal{H}}
\newcommand{\calP}{\mathcal{P}}
\newcommand{\calU}{\mathcal{U}}

\newcommand{\calV}{\mathcal{V}}

\newcommand{\calR}{\mathcal{R}}

\newcommand{\frakg}{\mathfrak{g}}
\newcommand{\fraku}{\mathfrak{u}}
\newcommand{\frakk}{\mathfrak{k}}
\newcommand{\frakp}{\mathfrak{p}}
\newcommand{\fraks}{\mathfrak{s}}
\newcommand{\frakn}{\mathfrak{n}}
\newcommand{\fraka}{\mathfrak{a}}

\newcommand{\frakm}{\mathfrak{m}}
\newcommand{\frakl}{\mathfrak{l}}
\newcommand{\frakt}{\mathfrak{t}}

\newcommand{\frakh}{\mathfrak{h}}
\newcommand{\frake}{\mathfrak{e}}
\newcommand{\frakf}{\mathfrak{f}}

\newcommand{\frakz}{\mathfrak{z}}

\newcommand{\Ann}{\textup{Ann}}

\newcommand{\GKdim}{\textup{GK-dim}}
\DeclareMathOperator{\gr}{gr}

\newcommand{\Set}[2]{\left\{#1\,\middle|\,#2\right\}} 
\newcommand{\set}[2]{\{#1\,|\,#2\}}

\newcommand{\blank}{\,\cdot\,}

\theoremstyle{plain}
\newtheorem{theorem}{Theorem}[section]
\newtheorem{proposition}[theorem]{Proposition}
\newtheorem{lemma}[theorem]{Lemma}
\newtheorem{corollary}[theorem]{Corollary}

\theoremstyle{definition}

\newtheorem{remark}[theorem]{Remark}

\begin{document}

\title[Degenerate principal series on symmetric $R$-spaces]{Structure of the degenerate principal series on symmetric $R$-spaces and small representations}
\date{April 5, 2013}

\author{Jan M\"{o}llers}

\address{Institut for Matematiske Fag, Aarhus Universitet, Ny Munkegade 118, 8000 Aarhus C, Denmark}
\curraddr{}
\email{moellers@imf.au.dk}

\author{Benjamin Schwarz}
\address{Institut f\"{u}r Mathematik, Universit\"{a}t Paderborn, Warburger Str. 100, 33098 Paderborn, Germany}
\curraddr{}
\email{bschwarz@math.upb.de}
\thanks{}

\begin{abstract}
Let $G$ be a simple real Lie group with maximal parabolic subgroup $P$ whose nilradical is abelian. Then $X=G/P$ is called a symmetric $R$-space. We study the degenerate principal series representations of $G$ on $C^\infty(X)$ in the case where $P$ is not conjugate to its opposite parabolic. We find the points of reducibility, the composition series and all unitarizable constituents. Among the unitarizable constituents we identify some small representations having as associated variety the minimal nilpotent $K_\CC$-orbit in $\frakp_\CC^*$, where $K_\CC$ is the complexification of a maximal compact subgroup $K\subseteq G$ and $\frakg=\frakk+\frakp$ the corresponding Cartan decomposition.
\end{abstract}

\subjclass[2010]{Primary 22E46; Secondary 17B08.}

\keywords{symmetric $R$-space, degenerate principal series, generalized principal series, composition series, minimal representation, associated variety, nilpotent orbit}

\maketitle

\section*{Introduction}

Let $X=G/P$ be an irreducible symmetric $R$-space, i.e. $G$ is a simple real non-compact Lie group and $P$ a maximal parabolic subgroup with abelian nilradical. Then $X$ is at the same time a compact symmetric space $X=K/M$ where $K\subseteq G$ is a maximal compact subgroup of $G$ and $M=K\cap P$. Write $P=M_PA_PN$ for the Langlands decomposition of $P$ and let $\fraka_P$ be the Lie algebra of $A_P$. On $X$ we consider the degenerate principal series representations (normalized smooth parabolic induction)
\begin{equation*}
 I(\nu) := \Ind_P^G(\1\otimes e^{\nu}\otimes\1) \subseteq C^\infty(X)
\end{equation*}
of parameter $\nu\in(\fraka_P)_\CC^*$. In this paper we are concerned with the structure of these representations. Natural questions in this framework are:
\begin{enumerate}[(1)]
\item For which $\nu\in(\fraka_P)_\CC^*$ is $I(\nu)$ irreducible?
\item What is the composition series of a reducible $I(\nu)$?
\item Which irreducible constituents are unitary?
\end{enumerate}

A key role in the study of these questions is played by intertwining operators. In particular, a $G$-invariant subspace $W\subseteq I(\nu)$ is unitary if and only if there exists an intertwining operator $T:W\to I(-\overline{\nu})$ with strictly positive eigenvalues. In this case the inner product on $W$ given by
\begin{align*}
 W\times W\to\CC,\,(f,g)\mapsto\int_X{f(x)\overline{Tg(x)}\td x}
\end{align*}
is $G$-invariant.\\

Under the assumption that $P$ is conjugate to its opposite parabolic subgroup $\overline{P}$ the questions (1), (2) and (3) have been completely answered by Johnson~\cite{Joh92}, \O rsted--Zhang~\cite{OZ95}, Sahi~\cite{Sah93,Sah95} and Zhang~\cite{Zha95}. This case occurs exactly when there exists a Weyl group element for the restricted root system $\Delta$ of $\frakg$ (with respect to a maximally non-compact Cartan) which acts on $\fraka_P$ by $-\1$. Hence the classical Knapp--Stein theory provides standard intertwining operators $A(\nu):I(\nu)\to I(-\nu)$ with meromorphic dependence in the parameter $\nu\in(\fraka_P)_\CC^*$. In the above mentioned papers these Knapp--Stein intertwiners and their residues are used to treat the above questions.

In this paper we completely answer the above questions for the case where $P$ is not conjugate to $\overline{P}$ (see Tables~\ref{tb:ClassificationNonCompactGroups} and \ref{tb:ClassificationCompactGroups} for a classification). Up to covering the possible groups $G$, sorted by the restricted root system $\Delta$ of $\frakg$ with respect to a maximally non-compact Cartan, and the corresponding Lie algebras $\frakl$ of the Levi part $L=M_PA_P$ of $P=LN$ are
\begin{itemize}
\item Type $A$
\begin{itemize}
\item $G=\SL(r+s,\KK)$, $s>r\geq1$, with $\frakl=\fraks(\gl(r,\KK)\oplus\gl(s,\KK))$ and $\KK\in\{\RR,\CC,\HH\}$,
\item $G=\SL(3,\OO)=E_{6(-26)}$ with $\frakl=\so(1,9)\oplus\RR$,
\end{itemize}
\item Type $D$
\begin{itemize}
\item $G=\SO_0(2r+1,2r+1)$, $r>1$, with $\frakl=\gl(2r+1,\RR)$,
\item $G=\SO(4r+2,\CC)$, $r>1$, with $\frakl=\gl(2r+1,\CC)$,
\end{itemize}
\item Type $E_6$
\begin{itemize}
\item $G=E_{6(6)}$ with $\frakl=\so(5,5)\oplus\RR$,
\item $G=E_6(\CC)$ with $\frakl=\so(10,\CC)\oplus\CC$.
\end{itemize}
\end{itemize}
Let $R$ denote the set of restricted roots in $\Delta$ which contribute to the Lie algebra of $P$. Then $R$ is a closed subsystem $R$ of $\Delta$ which contains all positive roots. In all cases above there exists no element $w$ in the Weyl group of $\Delta$ with $wR=-R$. (Note that this can only occur for a root system of type $A_n$, $D_{2n+1}$ and $E_6$.) Hence the classical Knapp--Stein theory does not yield intertwiners $I(\nu)\to I(-\nu)$. Instead we employ the method of the ``spectrum generating operator'' by Branson--\'{O}lafsson--\O rsted \cite{BOO96}. Initially this method was invented to determine the eigenvalues of a given intertwining operator. However, it also provides a technique to construct intertwiners purely in terms of their eigenvalues. We use this technique to find \textit{non-standard} intertwining operators for all unitarizable subrepresentations of $I(\nu)$ and answer questions (1), (2) and (3) (see Theorem~\ref{thm:Reducibility} for the reducibility question, Theorem~\ref{thm:CompositionSeries} for the composition series and Theorems~\ref{thm:UnitarySubrepsSL}, \ref{thm:UnitarySubrepsE6} and \ref{thm:UnitarySubrepsSO} for the unitarity question). It is noteworthy that non-trivial unitarizable constituents other than the unitary principal series (i.e.\ $\nu\in i(\fraka_P)^*$) only occur for the restricted root system $\Delta$ of type $D$ or $E_6$. Here in particular the study of the unitarity question is delicate. To successfully apply the method of the ``spectrum generating operator'' we need to determine whether certain coefficients in the expansion of the product of two spherical functions on $X$ vanish (see Lemma~\ref{lem:CmConditionForUnitarity}). This is done using a result by Vretare~\cite{Vre76} and a combinatorial calculation carried out in Appendix~\ref{sec:CmFormula}.

We further study the associated varieties of the unitary constituents and find some representations with associated variety equal to the smallest nilpotent $K_\CC$-orbit in $\frakp_\CC^*$ for the groups $\SL(1+s,\KK)$ ($\KK\in\{\RR,\CC,\HH\}$), $\SO(2r+1,2r+1)$, $\SO(4r+2,\CC)$, $E_{6(-26)}$, $E_{6(6)}$ and $E_6(\CC)$ (see Section~\ref{sec:AssocVar}). These representations are all spherical, have one-dimensional asymptotic $K$-support and their Gelfand--Kirillov dimension attains its minimum among all infinite-dimensional unitary representations. $L^2$-models of these small representations in the spirit of Sahi~\cite{Sah92}, Dvorsky--Sahi~\cite{DS99,DS03}, Kobayashi--{\O}rsted~\cite{KO03c}, Barchini--Sepanski--Zierau~\cite{BSZ06} and Hilgert--Kobayashi--M.~\cite{HKM} will be the subject of a subsequent paper.\\

For the special linear groups $G=\SL(r+s,\KK)$, $\KK\in\{\RR,\CC,\HH\}$, with maximal parabolic subgroups $P=S(\GL(r,\KK)\times\GL(s,\KK))\ltimes\RR^{r\times s}$ our results on reducibility, composition series and unitarity are not new, but were earlier obtained by Howe--Lee~\cite{HL99} and Lee~\cite{Lee07} (see also \cite{DZ97,Pas99,vDM99} for the cases $r=1$). For $G=\SO_0(2r+1,2r+1)$ Johnson~\cite{Joh90} found the points of reducibility and the composition series. Although he did not explicitly answer the unitarity question it can be read off from his results. All remaining cases, in particular the exceptional groups, seem to not have been treated yet. Further work on degenerate principal series associated to maximal parabolic subgroups with possibly non-abelian nilradical can be found in \cite{Cla12,Fuj01,Gro71,HT93,Joh76,JW77,LL02,LL03}.

It is also worth mentioning the recent work of Clerc~\cite{Cle12} and \'{O}lafsson--Pasquale~\cite{OP12} on Knapp--Stein intertwining operators between degenerate principal series on symmetric $R$-spaces corresponding to different parabolics.\\

\noindent

\emph{Acknowledgement:} We would like to thank B.\ {\O}rsted for helpful discussions and for bringing the method of the ``spectrum generating operator'' to our attention.

\section{Preliminaries}

\subsection{Symmetric $R$-spaces and root data}\label{sec:RootData}
Let $G$ be a simple real non-compact Lie group with maximal parabolic subgroup $P\subseteq G$ whose nilradical is abelian. Then, by definition $X:=G/P$ is an irreducible symmetric $R$-space. The geometry and structure theory of symmetric $R$-spaces have first been studied by Nagano \cite{N65} and Takeuchi \cite{T65}. In the following, we recall some details. Let $P = LN$ denote the Levi decomposition of $P$, and let $K\subseteq G$ be a maximal compact subgroup such that $M:=L\cap K$ is maximal compact in $L$. Since $G=KP$, we have a natural identification $X=K/M$. Let $\frakg$, $\frakl$ and $\frakn$ be the Lie algebras corresponding to $G$, $L$ and $N$, and let $\theta$ denote the Cartan involution corresponding to $K\subseteq G$, resp.\ $\frakk\subseteq\frakg$. Then, there is a (unique) grading element $Z_0\in\frakz(\frakl)$ in the center of $\frakl$ satisfying $\theta Z_0 = -Z_0$ and such that $\frakg$ decomposes under the adjoint action of $Z_0$ into
\begin{align}\label{eq:grading}
	\frakg = \overline\frakn\oplus\frakl\oplus\frakn
\end{align}
with eigenvalues $-1,0,1$. Here $\frakl\oplus\frakn$ is the Lie algebra of $P$. We note that $\overline\frakn = \theta\frakn$. The involution $\sigma:=\Ad\exp(\pi\,i Z_0)$ on $\frakg_\CC$ leaves $\frakg$ invariant and acts on $\overline\frakn\oplus\frakn$ by $-1$, and on $\frakl$ by $1$. This induces a non-trivial involution (also denoted by $\sigma$) on $G$ satisfying $\sigma(K) = K$ such that $(K,M,\sigma)$ is a compact symmetric pair. Therefore, $X$ is a compact Riemannian symmetric space. We further remark that $\frakn$ naturally carries the structure of a simple real Jordan triple system (see \cite{Lo85}). In fact, this establishes a one-to-one correspondence between irreducible symmetric $R$-spaces and simple real Jordan triple systems. However, we will not use the Jordan triple structure in this paper.

The Cartan decomposition of $\frakg$ with respect to $\theta$ is denoted by
\begin{align}\label{eq:CartanDecomp}
	\frakg = \frakk\oplus\frakp.
\end{align}
Since $\theta Z_0 = -Z_0$, the decompositions \eqref{eq:grading} and \eqref{eq:CartanDecomp} are compatible, so
\[
	\frakg = \frakm\oplus\frakn_\frakk\oplus\frakl_\frakp\oplus\frakn_\frakp,
\]
where $\frakm=\frakl\cap\frakk$ is the Lie algebra of $M\subseteq L$, $\frakl_\frakp :=\frakl\cap\frakp$, and
\begin{align}\label{eq:nIntersection}
	\begin{aligned}
 		\frakn_\frakk &:= (\overline\frakn\oplus\frakn)\cap\frakk = \Set{X+\theta X}{X\in\frakn},\\
 		\frakn_\frakp &:= (\overline\frakn\oplus\frakn)\cap\frakp = \Set{X-\theta X}{X\in\frakn}.
 	\end{aligned}
\end{align}

Let $\fraka\subseteq\frakp$ be a maximal abelian subalgebra containing $Z_0$, and let $\Delta=\Delta(\frakg,\fraka)$ be the corresponding (restricted) root system. Then $\Delta$ splits into $\Delta = \Delta_{-1}\cup\Delta_0\cup\Delta_1$, where $\Delta_k = \Set{\lambda\in\Delta}{\lambda(Z_0) = k}$. We choose a positive system $\Delta^+\subseteq\Delta$ such that $\Delta_1\subseteq\Delta^+\subseteq\Delta_0\cup\Delta_1$, and let $\beta_1,\ldots,\beta_r$ be a maximal system of strongly orthogonal roots in $\Delta_1$ such that $\beta_1$ is the highest root in $\Delta$. The number $r$ is called the \textit{split rank} of the symmetric $R$-space $X$. For each $\beta_k$, we fix an $\mathfrak{sl}_2$-triple $(E_k,h_k,E_{-k})$, i.e.
\begin{align}\label{eq:sl2Triple}
	[h_k,E_k] = 2\,E_k,\quad
	[h_k,E_{-k}] = -2\,E_{-k},\quad
	[E_k,E_{-k}] = h_k,
\end{align}
satisfying $E_k\in\frakg_{\beta_k}$, $E_{-k} = -\theta E_k\in\frakg_{-\beta_k}$, $h_k\in\fraka$. Set $H_k := E_k-E_{-k}\in\frakn_\frakk$. Then
\begin{equation}
	\frakt := \bigoplus_{k=1}^r \RR H_k\subseteq\frakn_\frakk\label{eq:DefTorusT}
\end{equation}
is maximal abelian in $\frakn_\frakk$, since naturally $\frakn_\frakk\cong\frakn_\frakp$ and $(\frakm\oplus\frakn_\frakp,\frakm)$ is the non-compact symmetric dual to $(\frakk,\frakm)$ due to \cite{N65, T65}, see also \cite[Proposition~4.3]{K87a}. Moreover, if $\gamma_j\in\frakt_\CC^*$ is defined by $\gamma_j(iH_k):=2\delta_{jk}$, the (restricted) root system $\Delta(\frakg_\CC,\frakt_\CC)$ is given by either
\begin{align}
	&\Set{\frac{\pm\gamma_j\pm\gamma_k}{2}}{1\leq j,k\leq r}\setminus\{0\}
	&&\text{(type $C_r$)},\label{eq:gRootsTypeC}
\intertext{called the 'unital' case, or}
	&\Set{\frac{\pm\gamma_j\pm\gamma_k}{2},\pm\frac{\gamma_j}{2}}
			{1\leq j,k\leq r}\setminus\{0\}
	&&\text{(type $BC_r$)},\label{eq:gRootsTypeBC}
\end{align}
called the 'non-unital' case, see \cite[Proposition~2.2]{K87b}. The term 'unital' corresponds to the fact, that the natural Jordan triple structure on $\frakn$ comes from a unital Jordan algebra precisely if $\Delta(\frakg_\CC,\frakt_\CC)$ is of type $C_r$. As noted in the introduction, here we are interested in the non-unital case (see Tables~\ref{tb:ClassificationNonCompactGroups} and \ref{tb:ClassificationCompactGroups} for a classification).

\begin{table}[ht]
\begin{center}
\begin{tabular}{r||c|c|c|}
  \cline{2-4}
  & $\frakg$ & $\frakl$ & $\frakn$\\
  \hline
  \hline
\multicolumn{1}{|r||}{A.1} & $\sl(r+s,\RR)$, $s>r\geq1$ & $\sl(r,\RR)\oplus\sl(s,\RR)\oplus\RR$ &
$M(r\times s,\RR)$\\
\multicolumn{1}{|r||}{A.2} & $\sl(r+s,\CC)$, $s>r\geq1$ & $\sl(r,\CC)\oplus\sl(s,\CC)\oplus\CC$ &
$M(r\times s,\CC)$\\
\multicolumn{1}{|r||}{A.3} & $\sl(r+s,\HH)$, $s>r\geq1$ & $\sl(r,\HH)\oplus\sl(s,\HH)\oplus\RR$ &
$M(r\times s,\HH)$\\
  \multicolumn{1}{|r||}{A.4} & $\frake_{6(-26)}$ & $\so(1,9)\oplus\RR$ & $M(1\times2,\OO)$\\
  \hline
\multicolumn{1}{|r||}{D.1} & $\so(2r+1,2r+1)$, $r>1$ & $\sl(2r+1,\RR)\oplus\RR$ &
$\Skew(2r+1,\RR)$\\
\multicolumn{1}{|r||}{D.2} & $\so(4r+2,\CC)$, $r>1$ & $\sl(2r+1,\CC)\oplus\CC$ & $\Skew(2r+1,\CC)$\\  \hline
  \multicolumn{1}{|r||}{E.1} & $\frake_{6(6)}$ & $\so(5,5)\oplus\RR$ & $M(1\times2,\OO_s)$\\
  \multicolumn{1}{|r||}{E.2} & $\frake_6(\CC)$ & $\so(10,\CC)\oplus\CC$ & $M(1\times2,\OO_\CC)$\\
  \hline
\end{tabular}\\[2mm]
\caption{Non-unital irreducible symmetric $R$-spaces: non-compact Lie algebras\label{tb:ClassificationNonCompactGroups}}
\end{center}
\end{table}

\begin{table}[ht]
\begin{center}
\begin{tabular}{r||c|c|}
  \cline{2-3}
  & $\frakk$ & $\frakm$\\
  \hline
  \hline
  \multicolumn{1}{|r||}{A.1} & $\so(r+s)$ & $\so(r)\oplus\so(s)$\\
  \multicolumn{1}{|r||}{A.2} & $\fraks\fraku(r+s)$ & $\fraks(\fraku(r)\oplus\fraku(s))$\\
  \multicolumn{1}{|r||}{A.3} & $\sp(r+s)$ & $\sp(r)\oplus\sp(s)$\\
  \multicolumn{1}{|r||}{A.4} & $\frakf_4$ & $\so(9)$\\
  \hline
  \multicolumn{1}{|r||}{D.1} & $\so(2r+1)\oplus\so(2r+1)$ & $\so(2r+1)$\\
  \multicolumn{1}{|r||}{D.2} & $\so(4r+2)$ & $\fraku(2r+1)$\\
  \hline
  \multicolumn{1}{|r||}{E.1} & $\sp(4)$ & $\sp(2)\oplus\sp(2)$\\
  \multicolumn{1}{|r||}{E.2} & $\frake_6$ & $\so(10)\oplus\RR$\\
  \hline
\end{tabular}\\[2mm]
\caption{Non-unital irreducible symmetric $R$-spaces: compact Lie algebras\label{tb:ClassificationCompactGroups}}
\end{center}
\end{table}

From the classification we see that the root system $\Delta=\Delta(\frakg,\fraka)$ is either of type $A_n$, $D_{2n+1}$ or $E_6$. Note that in these cases there is no element in the Weyl group of $\Delta$ which acts on $\fraka$ by $-1$.

The intersection of the root spaces of $\Delta(\frakg_\CC,\frakt_\CC)$ with $\frakk_\CC\subseteq\frakg_\CC$ yields a root system $\Delta(\frakk_\CC,\frakt_\CC)\subseteq \frakt_\CC^*$. In the non-unital case the multiplicities
\[
	e:=\dim(\frakk_\CC)_{\pm\gamma_j},\
	d:=\dim(\frakk_\CC)_{\frac{\pm\gamma_j\pm\gamma_k}{2}},\
	b:=\dim(\frakk_\CC)_{\pm\frac{\gamma_j}{2}}
\]
are independent of the choice of $1\leq j<k\leq r$ and the signs (see \cite[\S\,11]{Loo77}) and we always have $b\neq0$. Therefore, the non-unital case splits into two possibilities:
\begin{align*}
	\Delta(\frakk_\CC,\frakt_\CC)\text{ is of type}
		\begin{cases}
			B_r & \text{if $b\neq 0$, $e=0$,}\\
			BC_r & \text{if $b\neq 0$, $e\neq0$.}
		\end{cases}
\end{align*}
For later purpose, it is convenient to introduce the \textit{genus} $p$ of $X$ defined by
\begin{equation}\label{eq:DefGenus}
	p := (e+1)+(r-1)\,d+\frac{b}{2}.
\end{equation}

Let $\kappa$ denote the Killing form of $\frakg$ and its bilinear complexification. Then
\[
	\kappa(H_k,H_\ell)
	=\sum_{\lambda\in\Delta(\frakg_\CC,\frakt_\CC)}
		\lambda(H_k)\lambda(H_\ell)\dim(\frakg_\CC)_\lambda.
\]
We thus obtain
\[
	\kappa(H_\ell,H_\ell) = -8\,\big(\tilde e+(r-1)\tfrac{\tilde d}{2}+\tfrac{\tilde b}{4}\big),
\]
where $\tilde e = \dim(\frakg_\CC)_{\pm\gamma_j}$, $\tilde d = \dim(\frakg_\CC)_{(\pm\gamma_j\pm\gamma_k)/2}$, $\tilde b = \dim(\frakg_\CC)_{\pm\gamma_j/2}$, independent of the choice of $1\leq j<k\leq r$ and signs. It is known that $\tilde e = e+1$, $\tilde d = 2d$ and $\tilde b = 2b$, and hence we note that
\begin{align}\label{eq:KillingP}
	p = -\frac{1}{8}\,\kappa(H_\ell,H_\ell),
\end{align}
independent of $1\leq\ell\leq r$.

We fix a lexicographic ordering of roots in $\Delta(\frakk_\CC,\frakt_\CC)$ by setting $\gamma_1>\gamma_2>\cdots>\gamma_r$. The corresponding set of positive roots is denoted by $\Delta^+(\frakk_\CC,\frakt_\CC)$. A straightforward calculation shows that the half sum of positive roots is given by
\begin{align}\label{eq:rhoformula}
	\rho^\frakk = \sum_{k=1}^r\rho_k\gamma_k
	\quad\text{with}\quad
	\begin{aligned}
		\rho_k &:= (r-k)\tfrac{d}{2} + \tfrac{e}{2} + \tfrac{b}{4} = \tfrac{p-1}{2}-(k-1)\tfrac{d}{2}.
	\end{aligned}
\end{align}

For the discussion of induced representations we set $\fraka_P:=\RR Z_0$ and $A_P:=\exp(\fraka_P)$ and let $P=M_PA_PN$ be the corresponding Langlands decomposition of $P$. Let $\gamma\in\fraka_P^*$ be the linear functional defined by $\gamma(Z_0) = 1$. Since \eqref{eq:grading} is the decomposition of $\frakg$ corresponding to the adjoint action of $\fraka_P$, it follows that $\Delta(\frakg,\fraka_P) = \{\pm\gamma\}$. The parabolic $P\subseteq G$ corresponds to the choice of $\gamma$ as the positive root, and hence
\begin{align}\label{eq:rho}
	\rho = \frac{n}{2}\,\gamma
\end{align}
is the half sum of positive roots where $n=\dim\frakn$.

\begin{table}[ht]
\begin{center}
\begin{tabular}{r||c|c|c|c|c|c|c|}
  \cline{2-8}
  & $\frakn$ & $n$ & $p$ & $r$ & $d$ & $e$ & $b$\\
  \hline
  \hline
  \multicolumn{1}{|r||}{A.1} & $M(1\times s,\RR)$, $s>1$ & $s$ & $(s+1)/2$ & $1$ & $0$ & $0$ & $s-1$\\
  \multicolumn{1}{|r||}{A.2} & $M(1\times s,\CC)$, $s>1$ & $2s$ & $s+1$ & $1$ & $0$ & $1$ & $2(s-1)$\\
  \multicolumn{1}{|r||}{A.3} & $M(1\times s,\HH)$, $s>1$ & $4s$ & $2(s+1)$ & $1$ & $0$ & $3$ & $4(s-1)$\\
  \multicolumn{1}{|r||}{A.1} & $M(r\times s,\RR)$, $s>r>1$ & $rs$ & $(r+s)/2$ & $r$ & $1$ & $0$ & $s-r$\\
  \multicolumn{1}{|r||}{A.2} & $M(r\times s,\CC)$, $s>r>1$ & $2rs$ & $r+s$ & $r$ & $2$ & $1$ & $2(s-r)$\\
  \multicolumn{1}{|r||}{A.3} & $M(r\times s,\HH)$, $s>r>1$ & $4rs$ & $2(r+s)$ & $r$ & $4$ & $3$ & $4(s-r)$\\
  \multicolumn{1}{|r||}{A.4} & $M(1\times2,\OO)$ & $16$ & $12$ & $1$ & $0$ & $7$ & $8$\\
  \hline
  \multicolumn{1}{|r||}{D.1} & $\Skew(2r+1,\RR)$, $r>1$ & $r(2r+1)$ & $2r$ & $r$ & $2$ & $0$ & $2$\\
  \multicolumn{1}{|r||}{D.2} & $\Skew(2r+1,\CC)$, $r>1$ & $2r(2r+1)$ & $4r$ & $r$ & $4$ & $1$ & $4$\\
  \hline
  \multicolumn{1}{|r||}{E.1} & $M(1\times2,\OO_s)$ & $16$ & $6$ & $2$ & $3$ & $0$ & $4$\\
  \multicolumn{1}{|r||}{E.2} & $M(1\times2,\OO_\CC)$ & $32$ & $12$ & $2$ & $6$ & $1$ & $8$\\
  \hline
\end{tabular}\\[2mm]
\caption{Non-unital irreducible symmetric $R$-spaces: structure constants\label{tb:ClassificationConstants}}
\end{center}
\end{table}

\subsection{$K$-types}

In this section, we describe the decomposition of $L^2(X)$ into $K$-irreducible subspaces by using the Cartan--Helgason theorem. For this we assume that $G$ has trivial center. Since $K$ is not necessarily simply-connected and $M$ is not necessarily connected we have to be careful in lifting Lie algebraic results to the group level.

For ${\bf m}=(m_1,\ldots, m_r)\in\ZZ^r$ let ${\bf m}\geq 0$ denote the condition
\[
	m_1\geq m_2\geq\cdots\geq m_r\geq 0.
\]
The main goal of this section is the following.

\begin{proposition}\label{prop:KtypeDecompositionL2K}
	Let $X$ be a non-unital symmetric $R$-space. Then the space $L^2(X)_{K\textup{-finite}}$ of $K$-finite vectors in 
	$L^2(X)$ decomposes into
	\begin{align*}
		 L^2(X)_{K\textup{-finite}} &= \bigoplus_{{\bf m}\geq0}{V^{\bf m}},
	\end{align*}
	where for ${\bf m}\geq 0$ the subrepresentation $V^{\bf m}$ is the irreducible 
	unitary $M$-spherical $K$-representation with highest weight
	$\lambda_{\bf m}:=\sum_{i=1}^r{m_i\gamma_i}$. In each $V^{\bf m}$ the space of
	$M$-spherical vectors is one-dimensional and there is a unique $M$-spherical
	vector $\phi_{\bf m}\in V^{\bf m}$ with $\phi_{\bf m}(x_0)=1$.
\end{proposition}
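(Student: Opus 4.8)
The plan is to reduce the statement to the Cartan--Helgason theorem for the compact symmetric pair $(\frakk,\frakm)$ and then resolve the group-level integrality by hand. Since $X=K/M$ with $K$ compact, the Peter--Weyl theorem together with Frobenius reciprocity gives, as $K$-representations,
\[
	L^2(X)_{K\text{-finite}} = \bigoplus_{\pi\in\widehat K} V_\pi\otimes (V_\pi^*)^M,
\]
so an irreducible $\pi$ occurs with multiplicity $\dim(V_\pi^*)^M=\dim(V_\pi)^M$. The proposition is therefore equivalent to two assertions: that the representations $\pi$ with $(V_\pi)^M\neq0$ are precisely the $V^{\bf m}$ for ${\bf m}\geq0$, and that for each such $\pi$ one has $\dim(V_\pi)^M=1$.

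Next I would apply Cartan--Helgason. Since $\frakk=\frakm\oplus\frakn_\frakk$ is the symmetric-pair decomposition (with $\frakm$ the $+1$ and $\frakn_\frakk$ the $-1$ eigenspace of $\sigma|_\frakk$) and $\frakt\subseteq\frakn_\frakk$ is maximal abelian, $\Delta(\frakk_\CC,\frakt_\CC)$ is exactly the restricted root system of $(\frakk,\frakm)$. Extending $\frakt$ to a maximal torus $\frakh=\frakt\oplus\frakt_\frakm$ of $\frakk$ with $\frakt_\frakm\subseteq\frakm$, the Cartan--Helgason theorem characterizes the irreducible $\frakk$-representations of highest weight $\lambda$ that are $\frakm$-spherical (with a one-dimensional space of $\frakm$-fixed vectors) by the conditions $\lambda|_{\frakt_\frakm}=0$ and $\frac{\langle\lambda,\alpha\rangle}{\langle\alpha,\alpha\rangle}\in\ZZ_{\geq0}$ for every $\alpha\in\Delta^+(\frakk_\CC,\frakt_\CC)$. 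Writing $\lambda=\sum_i m_i\gamma_i$ and using that the $\gamma_i$ are mutually orthogonal of equal length, the roots $\frac{\gamma_j-\gamma_k}{2}$ ($j<k$) give $m_j-m_k\in\ZZ_{\geq0}$, hence ${\bf m}\geq0$; the roots $\frac{\gamma_j+\gamma_k}{2}$ are then automatic; the short roots $\tfrac{\gamma_j}{2}$ (type $B_r$) give $2m_j\in\ZZ_{\geq0}$, while in type $BC_r$ the additional roots $\gamma_j$ sharpen this to $m_j\in\ZZ_{\geq0}$.

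The main obstacle is the transition from this Lie-algebra classification to the actual compact groups: $K$ need not be simply connected and $M$ need not be connected, so not every $\frakm$-spherical $\frakk$-weight integrates to an $M$-spherical $K$-representation. Concretely, in the type $B_r$ cases (those with $e=0$, i.e.\ A.1, D.1, E.1) the Cartan--Helgason conditions alone permit half-integral $m_j$, which must be excluded. I would do this by checking analytic integrality: $\lambda_{\bf m}$ must lie in the weight lattice of the actual group $K$, which, since $G$ is assumed centerless, is pinned down (case by case from Tables~\ref{tb:ClassificationNonCompactGroups}--\ref{tb:ClassificationCompactGroups}), and the half-integral weights are exactly the spin-type weights that fail to be $K$-analytically integral. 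Simultaneously one checks that invariance under the full, possibly disconnected, group $M$ rather than its identity component does not shrink the one-dimensional $\frakm$-fixed space, so that $\dim(V^{\bf m})^M=1$ persists at the group level. This is where the care flagged just before the statement is needed, and I expect it to be the most delicate part.

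Finally, the normalization. Since $\dim(V^{\bf m})^M=1$, the $M$-spherical vector in $V^{\bf m}\subseteq C^\infty(X)$ is unique up to scalar; realizing this copy of $V^{\bf m}$ inside $C^\infty(K/M)$ via matrix coefficients $k\mapsto\langle\pi(k)v_0,v_0\rangle$ against a unit $M$-fixed vector $v_0$, the $M$-invariant function is the elementary spherical function, whose value at the base point $x_0=eM$ is $\|v_0\|^2\neq0$. Rescaling then produces the unique $\phi_{\bf m}$ with $\phi_{\bf m}(x_0)=1$.
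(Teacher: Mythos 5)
Your reduction via Peter--Weyl/Frobenius reciprocity and the Lie-algebra Cartan--Helgason computation coincide with the paper's Lemma~\ref{lem:CartanHelgason}. But the group-level step --- which you correctly flag as the crux --- is where the proposal breaks down, in two ways. First, your proposed mechanism for excluding the half-integral weights in the type $B_r$ cases (``the half-integral weights are exactly the spin-type weights that fail to be $K$-analytically integral'') is incorrect: these weights are in general analytically integral for $K$. Since $\gamma_j(iH_k)=2\delta_{jk}$ and the kernel of $\exp|_{\frakt}$ is $\sum_{j}2\pi\ZZ H_j$ (Lemma~\ref{lem:UnitLatticeTorus}\,(1)), a weight $\sum_j t_j\gamma_j$ passes the integrality test on $T$ precisely when $t_j\in\tfrac12\ZZ$, so integrality does not distinguish $\ZZ$ from $\tfrac12\ZZ$. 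Concretely, in case A.1 with $r=1$ (take $s$ even, so $G=\SL(1+s,\RR)$ is centerless and $K=\SO(1+s)$, $X=\RR\PP^{s}$): the weight $\tfrac12\gamma_1=\varepsilon_1$ is the standard representation $\CC^{1+s}$ --- a genuine, non-spin representation of $K$, perfectly integral --- yet it has no $M$-invariant vector, because the disconnected part of $M=S(\upO(1)\times\upO(s))$ acts by $-1$ on the $M_0$-fixed line (odd spherical harmonics on $S^s$ do not descend to $\RR\PP^s$). The correct exclusion mechanism is invariance under the component group of $M$, and this is exactly what the paper's Lemma~\ref{lem:UnitLatticeTorus}\,(2) supplies uniformly (no case-by-case from the tables): one computes $\Ad(\exp H)Z_0$ via the $\mathfrak{sl}_2$-triples and, using $M=Z_K(\fraka_P)$, obtains $T\cap M=\exp\big(\sum_j\pi\ZZ H_j\big)$ and $M=(T\cap M)M_0$. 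Then $k=\exp(\pi H_j)\in M$ acts on a highest weight vector $v$ by $e^{\mp2\pi i m_j}$, and since $v$ pairs nontrivially with any $M$-fixed vector, this forces $m_j\in\ZZ$.

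Second, the converse upgrade --- that for integral ${\bf m}$ the one-dimensional $\frakm$-fixed (equivalently $M_0$-fixed) line is fixed by all of $M$ --- is merely asserted in your proposal (``one checks \dots does not shrink'') but never argued, and it is not automatic; it is the other half of the delicate part. The same lemma does this work: writing $\phi_{\bf m}=\int_{M_0}(k_0\cdot v_{\bf m})\td k_0$, the elements $k\in T\cap M$, which together with $M_0$ generate $M$, fix $v_{\bf m}$ when all $m_j\in\ZZ$, and invariance of Haar measure under conjugation then gives $k\cdot\phi_{\bf m}=\phi_{\bf m}$. So while your overall architecture agrees with the paper's, the proposal rests on a false criterion for one direction and a black box for the other; the missing ingredient in both is the explicit unit-lattice computation of Lemma~\ref{lem:UnitLatticeTorus}.
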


We first apply the Cartan--Helgason theorem on the Lie algebra level.

\begin{lemma}\label{lem:CartanHelgason}
	The highest weight of an irreducible $\frakk$-representation with an $\frakm$-spherical vector
	vanishes on the orthogonal complement of $\frakt$ in any maximal torus of $\frakk$ containing
	$\frakt$. The possible weights which give unitary irreducible $\frakm$-representations 
	are precisely given by
	\begin{align*}
		\Lambda^+_{\frakm}(\frakk) = 
	\begin{cases}
		\Set{\sum_{i=1}^r{t_i\gamma_i}}
				{t_i\in\tfrac{1}{2}\,\ZZ,t_i-t_j\in\ZZ,t_1\geq\ldots\geq t_r\geq0}
		& \mbox{in case $B$,}\\
		\Set{\sum_{i=1}^r{t_i\gamma_i}}
				{t_i\in\ZZ,t_1\geq\ldots\geq t_r\geq0}
		& \mbox{in case $BC$.}\end{cases}
	\end{align*}
	Further, in each irreducible $\frakm$-spherical $\frakk$-representation the space of 
	$\frakm$-spherical vectors is one-dimensional.
\end{lemma}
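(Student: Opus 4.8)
The statement is the classical Cartan--Helgason theorem, specialized to the compact symmetric pair $(\frakk,\frakm)$ attached to the involution $\sigma$, so the plan is to invoke that theorem and then carry out the explicit root computation that turns its integrality conditions into the two displayed lists. Recall that $\frakk=\frakm\oplus\frakn_\frakk$ is the decomposition into the $(+1)$- and $(-1)$-eigenspaces of $\sigma|_\frakk$, and that by \eqref{eq:DefTorusT} the subalgebra $\frakt$ is maximal abelian in the $(-1)$-eigenspace $\frakn_\frakk$; thus $\frakt$ plays the role of the maximal abelian subspace in the ``$\frakp$-part'' of the pair and $\Delta(\frakk_\CC,\frakt_\CC)$ is exactly its restricted root system. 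First I would invoke the Cartan--Helgason theorem (in the form given in Helgason's \emph{Groups and Geometric Analysis}): an irreducible $\frakk$-representation admits a nonzero $\frakm$-fixed vector if and only if its highest weight $\lambda$ (i) vanishes on the orthogonal complement of $\frakt$ in any maximal torus of $\frakk$ containing $\frakt$ --- equivalently $\lambda\in\frakt_\CC^*$, so that $\lambda=\sum_i t_i\gamma_i$ --- and (ii) satisfies $\langle\lambda,\alpha\rangle/\langle\alpha,\alpha\rangle\in\ZZ_{\geq0}$ for every positive root $\alpha\in\Delta^+(\frakk_\CC,\frakt_\CC)$, and moreover in this case the $\frakm$-fixed subspace is one-dimensional. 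The first and third assertions of the lemma are precisely (i) and the one-dimensionality, so it remains only to convert (ii) into the explicit conditions on the $t_i$.

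For the computation I would first record that, because $\Delta(\frakg_\CC,\frakt_\CC)$ is of type $C_r$ or $BC_r$ in the coordinates $\gamma_1,\dots,\gamma_r$ (see \eqref{eq:gRootsTypeC}--\eqref{eq:gRootsTypeBC}), the functionals $\gamma_1,\dots,\gamma_r$ are pairwise orthogonal and of equal length with respect to the Killing-form-induced inner product on $\frakt_\CC^*$. Writing $e_j:=\tfrac12\gamma_j$, so that $\langle e_j,e_k\rangle=\tfrac14\langle\gamma_1,\gamma_1\rangle\,\delta_{jk}$, the positive restricted roots --- read off from the multiplicities $d,b,e$ and the lexicographic order $\gamma_1>\cdots>\gamma_r$ --- are $e_j\pm e_k$ ($j<k$, multiplicity $d$) and $e_j$ (multiplicity $b$) in case $B$, together with the additional long roots $2e_j=\gamma_j$ (multiplicity $e$) in case $BC$. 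For $\lambda=\sum_i t_i\gamma_i$ a direct evaluation then yields
\[
\frac{\langle\lambda,e_j-e_k\rangle}{\langle e_j-e_k,\,e_j-e_k\rangle}=t_j-t_k,\qquad
\frac{\langle\lambda,e_j\rangle}{\langle e_j,\,e_j\rangle}=2t_j,\qquad
\frac{\langle\lambda,2e_j\rangle}{\langle 2e_j,\,2e_j\rangle}=t_j,
\]
while the roots $e_j+e_k$ give the ratio $t_j+t_k$.

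It then remains to assemble the binding constraints. In case $B$ the roots $e_j-e_k$ force $t_1\geq\cdots\geq t_r$ with all $t_j-t_k\in\ZZ$, while the short roots $e_j$ force $2t_j\in\ZZ_{\geq0}$, hence in particular $t_r\geq0$; the roots $e_j+e_k$ then impose nothing new. This gives exactly $t_j\in\tfrac12\ZZ$, $t_j-t_k\in\ZZ$, $t_1\geq\cdots\geq t_r\geq0$, matching the case $B$ list. In case $BC$ the additional long roots $2e_j$ replace $2t_j\in\ZZ_{\geq0}$ by the strictly stronger $t_j\in\ZZ_{\geq0}$, which subsumes the conditions from $e_j$ and $e_j+e_k$ and produces the case $BC$ list. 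The one genuine subtlety --- and the step I would treat most carefully --- is precisely this interplay in the non-reduced $BC_r$ situation: one must apply the integrality condition to the full (possibly non-reduced) restricted root system and recognize that for the collinear pair $\{e_j,2e_j\}$ it is the longer root $2e_j$ that produces the binding constraint, so that half-integral coefficients are excluded in case $BC$ but permitted in case $B$.
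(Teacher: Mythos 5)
Your proof is correct and follows essentially the same route as the paper's: invoke the Cartan--Helgason theorem from Helgason's \emph{Groups and Geometric Analysis} for the vanishing statement and the one-dimensionality of the $\frakm$-fixed space, then compute the ratios $\langle\lambda,\alpha\rangle/\langle\alpha,\alpha\rangle$ for $\alpha$ ranging over the positive roots of $\Delta(\frakk_\CC,\frakt_\CC)$, obtaining $t_j\pm t_k$, $2t_j$ and $t_j$ exactly as the paper does. Your explicit remark that in the non-reduced $BC_r$ case the long root $2e_j=\gamma_j$ supplies the binding integrality constraint is the same point the paper makes implicitly by listing the ratio $t_j$ for $\alpha=\gamma_j$ alongside $2t_j$ for $\alpha=\gamma_j/2$.
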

\begin{proof}
By the Cartan--Helgason theorem \cite[V\,\S4, Theorem~4.1]{Hel84} the set $\Lambda^+_{\frakm}(\frakk)$ of highest weights of $\frakm$-spherical irreducible $\frakk$-representations consists of all $\lambda\in\frakt_\CC^*$ such that
\begin{equation*}
 \frac{\kappa(\lambda,\alpha)}{\kappa(\alpha,\alpha)}\in\NN_0 \qquad \mbox{for all $\alpha\in\Delta^+(\frakk_\CC,\frakt_\CC)$,}
\end{equation*}
where we identify $\frakt_\CC^*\cong\frakt_\CC$ using $\kappa$. Writing $\lambda=\sum_{j=1}^rt_j\gamma_j$ with $t_j\in\RR$ we find
\begin{align*}
	\frac{\kappa(\lambda,\alpha)}{\kappa(\alpha,\alpha)} &=
	\begin{cases}
		t_j & \text{if $\alpha = \gamma_j$,}\\
		t_j\pm t_k & \text{if $\alpha = \tfrac{\gamma_j\pm\gamma_k}{2}$, $j<k$,}\\
		2t_j & \text{if $\alpha = \tfrac{\gamma_j}{2}$,}
	\end{cases}
\end{align*}
which implies the claimed statement for $\Lambda^+_\frakm(\frakk)$. Finally, \cite[VI Lemma~3.6]{Hel84} states that the space of $\frakm$-spherical vectors is one-dimensional.
\end{proof}

In order to lift this result to the group level, we need the following lemma.

\begin{lemma}\label{lem:UnitLatticeTorus}
	Let $T=\exp(\frakt)\subseteq K$.
	\begin{enumerate}[(1)]
		\item The exponential map $\frakt\to T$ has kernel $\sum_{j=1}^r{2\pi\ZZ H_j}$.
		\item $T$ meets every connected component of $M$, i.e.\ $M=(T\cap M)M_0$. Moreover
					\begin{equation}
						T\cap M = \exp\left(\sum_{j=1}^r{\pi\ZZ H_j}\right).\label{eq:UnitLatticeTorusKL}
					\end{equation}
	\end{enumerate}
\end{lemma}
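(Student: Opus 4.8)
The plan is to treat the two parts in turn, deriving everything from the commuting $\mathfrak{sl}_2$-triples and the explicit involution $\sigma=\Ad\exp(\pi i Z_0)$.

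For part (1), I first note that strong orthogonality of $\beta_1,\dots,\beta_r$ makes the triples $(E_k,h_k,E_{-k})$ commute pairwise, so $\frakt$ is abelian and $T$ is a torus. Since $G$ has trivial center, $\exp X=e$ for $X=\sum_k t_kH_k\in\frakt$ is equivalent to $e^{\ad X}=\id$ on $\frakg_\CC$, i.e. to $\lambda(X)\in2\pi i\ZZ$ for every $\lambda\in\Delta(\frakg_\CC,\frakt_\CC)$. Using $\gamma_j(H_k)=-2i\delta_{jk}$, the short roots $\tfrac{\gamma_k}{2}$ (which exist because $b\neq0$ in the non-unital case) give $\lambda(X)=-it_k$, forcing $t_k\in2\pi\ZZ$; conversely $t_k\in2\pi\ZZ$ makes $\lambda(X)\in2\pi i\ZZ$ for all roots, since their $\gamma$-coefficients lie in $\tfrac12\ZZ$. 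Hence $\ker(\exp|_\frakt)=\sum_k2\pi\ZZ H_k$.

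For part (2) I separate the computation of $T\cap M$ from the component statement. Since $\frakt\subseteq\frakn_\frakk$ and $\sigma$ acts by $-1$ on $\overline\frakn\oplus\frakn$, we have $\sigma|_\frakt=-\id$, so $\sigma(\exp X)=\exp(-X)$ for $X\in\frakt$. If $\exp X\in M=Z_K(Z_0)$, then $\exp X$ commutes with $\exp(\pi iZ_0)$, so $\sigma(\exp X)=\exp X$; combined with $\sigma(\exp X)=(\exp X)^{-1}$ this gives $\exp 2X=e$, whence $2X\in\sum_k2\pi\ZZ H_k$ and $X\in\sum_k\pi\ZZ H_k$ by part (1). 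For the reverse inclusion I compute directly that $\exp(\pi H_k)$ fixes $Z_0$: from $[H_k,Z_0]=-(E_k+E_{-k})$, $[H_k,E_k+E_{-k}]=2h_k$ and $[H_k,h_k]=-2(E_k+E_{-k})$ one sees that $h_k-2Z_0$ lies in $\ker(\ad H_k)$ while $E_k+E_{-k},h_k$ span a plane on which $\ad H_k$ has eigenvalues $\pm2i$, so that $\exp(\pi\ad H_k)$ restricts to the identity there; writing $Z_0=\tfrac12 h_k-\tfrac12(h_k-2Z_0)$ then gives $\exp(\pi\ad H_k)Z_0=Z_0$. Thus $\exp(\pi H_k)\in Z_G(Z_0)\cap K=M$, and since the $H_k$ commute, $\exp\big(\sum_k\pi\ZZ H_k\big)\subseteq T\cap M$. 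This proves \eqref{eq:UnitLatticeTorusKL}, and in particular $T\cap M=\{t\in T:t^2=e\}\cong(\ZZ/2\ZZ)^r$.

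Finally, the assertion $M=(T\cap M)M_0$ is the delicate point. Since $\frakk=\frakm\oplus\frakn_\frakk$ is the symmetric decomposition for $\sigma|_\frakk$ and $\frakt$ is maximal abelian in $\frakn_\frakk$, I would extend $\frakt$ to a $\sigma$-stable maximal torus $\frakh=\frakt\oplus\frakt_\frakc$ of $\frakk$, where $\frakt_\frakc\subseteq\frakm$ is a maximal torus of $Z_\frakm(\frakt)$; then $\sigma$ acts by $-1$ on $\frakt$ and by $+1$ on $\frakt_\frakc$, so $H^\sigma=(T\cap M)\exp(\frakt_\frakc)\subseteq(T\cap M)M_0$. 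The statement then reduces to showing that every connected component of $K^\sigma$ meets this maximally split $\sigma$-stable maximal torus $H$ (recall $M_0=(K^\sigma)_0\subseteq M\subseteq K^\sigma$, so each component of $M$ is a component of $K^\sigma$). I would establish this by the standard two-step reduction: using that $M_0$ acts transitively on the maximal abelian subspaces of $\frakn_\frakk$, conjugate a given $m\in M$ into $N_M(\frakt)$; then use that the little Weyl group $N_{K^\sigma}(\frakt)/Z_{K^\sigma}(\frakt)$ is already realized by $N_{M_0}(\frakt)$ to reduce to $m\in Z_K(\frakt)\cap K^\sigma$, where one analyzes the $\sigma$-fixed points of the abelian group generated by $m$ and $T$. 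The main obstacle is precisely this last component analysis, since a disconnected compact abelian subgroup need not lie in a single maximal torus; I expect it to be handled either by the structure theory of fixed-point groups of involutions on compact connected groups, or, failing a uniform argument, by direct inspection of $\pi_0(M)$ in each line of Tables~\ref{tb:ClassificationNonCompactGroups}--\ref{tb:ClassificationCompactGroups}, where $\pi_0(M)$ is a $2$-group generated by the classes of the $\exp(\pi H_k)$.
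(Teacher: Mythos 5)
Parts (1) and the identification \eqref{eq:UnitLatticeTorusKL} of $T\cap M$ in your proposal are correct. Your part (1) is the paper's argument verbatim (adjoint group, root decomposition, short roots $\pm\frac{\gamma_j}{2}$ present since $b\neq0$). For $T\cap M$ your organization is slightly different but sound: the inclusion $T\cap M\subseteq\exp(\sum_j\pi\ZZ H_j)$ via $\sigma|_\frakt=-\id$ and $M\subseteq K^\sigma$ (so $t\in T\cap M$ forces $t^2=e$) is a clean substitute for one direction of the paper's explicit formula $\Ad(\exp H)Z_0 = Z_0-\frac{1}{2}\sum_j\bigl((1-\cos(2t_j))h_j+\sin(2t_j)F_j\bigr)$, and your eigenplane argument for $\exp(\pi\ad H_k)Z_0=Z_0$ is the same bracket computation the paper performs, just packaged as a $\pm2i$-rotation on $\mathrm{span}\{F_k,h_k\}$ plus the invariant vector $h_k-2Z_0$.

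The genuine gap is the component statement $M=(T\cap M)M_0$, which is half of part (2) and which you do not prove. Your proposed route (conjugate $m\in M$ into $N_M(\frakt)$, reduce modulo the little Weyl group, then analyze $Z_K(\frakt)\cap K^\sigma$) stalls exactly where you admit it does; moreover the intermediate claim that $N_{K^\sigma}(\frakt)/Z_{K^\sigma}(\frakt)$ is already realized by $N_{M_0}(\frakt)$ would itself require justification --- the standard fact only gives $N_{M_0}(\frakt)/Z_{M_0}(\frakt)\cong W(\Sigma(\frakk,\frakt))$, and elements of other components of $K^\sigma$ could a priori normalize $\frakt$ with extra symmetries --- and the fallback ``direct inspection of $\pi_0(M)$ in each line of the tables'' is not carried out. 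The paper disposes of this point in one line using the Cartan-type decomposition $K=M_0TM_0$ for the compact symmetric pair (Helgason, \cite[Chapter V, Theorem 6.7]{Hel78}), which applies since $K$ is connected and $\frakt$ is maximal abelian in the $(-1)$-eigenspace $\frakn_\frakk$ of $\sigma|_\frakk$: given $m\in M$, write $m=k_1tk_2$ with $k_1,k_2\in M_0$ and $t\in T$; then $t=k_1^{-1}mk_2^{-1}\in T\cap M$, and since $M_0$ is normal in $M$ one gets $m=t\,(t^{-1}k_1t)\,k_2\in(T\cap M)M_0$. Inserting this decomposition at the point where your sketch breaks down would complete your proof; without it, or without actually executing the classification check, the lemma is not established.
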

\begin{proof}
Let $H:=\sum_{j=1}^r{t_jH_j}\in\frakt$ with $t_j\in\RR$, and consider the adjoint action of $\exp(H)$ on $\frakg_\CC$. Since $G$ is the adjoint group of $\frakg$, the element $H$ is in the kernel of $\exp$ if and only if $\Ad(\exp(H))=\id$. Since 
\[
	\Ad(\exp(H))X = e^{\sum_{j=1}^r t_j\lambda(H_j)}X\quad\text{for}\quad X\in(\frakg_\CC)_\lambda,
\]
and $\gamma_k(H_\ell) = -2i\delta_{k\ell}$, the decomposition \eqref{eq:gRootsTypeBC} immediately yields statement (1). For (2), we first note that $K=M_0TM_0$ and therefore $M=(T\cap M)M_0$, see \cite[Chapter V, Theorem 6.7]{Hel78}. Let $H=\sum_{j=1}^r t_jH_j$ with $t_j\in\RR$. Since $M=Z_K(\fraka_P)$ we have to show that $\Ad(\exp(H))Z_0=Z_0$ if and only if $t_j\in\pi\ZZ$ for all $j=1,\ldots,r$. Set $F_j:=E_j+E_{-j}$ with $E_{\pm j}$ defined in \eqref{eq:sl2Triple}. Since
$\ad(H_j)Z_0=-F_j$ and
\begin{align*}
 \ad(H_k)(F_j) &= 2\delta_{jk}\,h_k,\\
 \ad(H_k)(h_j) &= -2\delta_{jk}\,F_k,
\end{align*}
we find that
\begin{multline*}
 \Ad(\exp(H))Z_0 = \exp(\ad(H))Z_0
 	= Z_0 - \frac{1}{2}\,\sum_{j=1}^r\left(
 		\big(1-\cos(2t_j)\big)\,h_j+\sin(2t_j)\,F_j\right)
\end{multline*}
and the claim follows.\qedhere
\end{proof}

\begin{proof}[Proof of Proposition~\ref{prop:KtypeDecompositionL2K}]
First note that by Lemma~\ref{lem:UnitLatticeTorus}~(1) the lattice $\sum_{j=1}^r{\ZZ\gamma_j}$
consists of analytically integral functionals for $K$. Hence, for ${\bf m}\geq0$ the $\frakk$-action on $V^{\bf m}$ integrates to an action of $K$. Every $\frakm$-spherical
vector $\phi_{\bf m}\in V^{\bf m}$ is then automatically $M_0$-spherical. We claim that an
$M_0$-spherical vector $\phi_{\bf m}$ is also $M$-spherical. In fact, by
Lemma~\ref{lem:UnitLatticeTorus}~(2) it suffices to check that $k\cdot\phi_{\bf m}=\phi_{\bf m}$ for
$k\in T\cap M$. Write
\begin{equation*}
 \phi_{\bf m} = \int_{M_0}{(k_0\cdot v_{\bf m})\td k_0}
\end{equation*}
for some highest weight vector $v_{\bf m}\in V^{\bf m}$. Then by \eqref{eq:UnitLatticeTorusKL} we
have $k\cdot v_{\bf m}=v_{\bf m}$ and hence we obtain
\begin{align*}
 k\cdot\phi_{\bf m}
 &= \int_{M_0}{(kk_0\cdot v_{\bf m})\td k_0} = \int_{M_0}{(k_0k\cdot v_{\bf m})\td k_0}\\
 &= \int_{M_0}{(k_0\cdot v_{\bf m})\td k_0} = \phi_{\bf m}.
\end{align*}
It remains to show that the representations $V^{\bf m}$ comprise all irreducible $K$-represen\-tations
with an $M$-spherical vector. By Lemma~\ref{lem:CartanHelgason} it suffices to show that a
$K$-representation $V$ with highest weight $\sum_{j=1}^r{m_j\gamma_j}$, $m_1\geq\ldots\geq m_r\geq0$,
can only have an $M$-invariant vector if $m_j\in\ZZ$ for all $j=1,\ldots,r$. Let $\phi\neq0$
be an $M$-invariant vector and let $v$ be a highest weight vector. If $(\blank|\blank)$ denotes the
$K$-invariant inner product on $V$ then $(k\cdot v|\phi)=(v|\phi)\neq0$ for all $k\in M$. By Lemma~\ref{lem:UnitLatticeTorus}~(2)
the element $k:=\exp(\pi H_j)$ is contained in $M$ and $k\cdot v=e^{2\pi im_j}v$. Hence $m_j\in\ZZ$ for all $j=1,\ldots,r$ and the proof is complete.
\end{proof}

\section{Degenerate principal series and the spectrum generating operator}\label{sec:SpectrumGeneratingOperator}

For the rest of this article let $X$ be a non-unital symmetric $R$-space.

\subsection{Degenerate principal series}

Recall the element $Z_0\in\fraka_P$ from \eqref{eq:grading}. We identify $(\fraka_P)_\CC^*$ with $\CC$ by the map $\nu\mapsto\frac{p}{n}\nu(Z_0)$. Then $\rho$ from \eqref{eq:rho} corresponds to $\frac{p}{2}$. (The reason for this normalization is the simplicity of the formulas in Proposition~\ref{prop:ExplicitFormularhosInTermsOfOmega}.) We form the induced representation (normalized smooth parabolic induction)
\begin{multline*}
 I(\nu) := \Ind_P^G(\1\otimes e^\nu\otimes\1) = \{\varphi\in C^\infty(G):\varphi(gman)=a^{-\nu-\rho}\varphi(g)\\
 \forall\,g\in G,man\in M_PA_PN\}.
\end{multline*}
Denote by $\pi_\nu$ the corresponding action of $G$ on $I(\nu)$ by left-translation.

Since $G=KP$, restriction to $K$ defines an isomorphism $I(\nu)\to C^\infty(X)$. Denote by $(\rho_\nu,C^\infty(X))$ the corresponding representation of $G$, i.e.
\begin{align*}
 \rho_\nu(g)(\varphi|_K) &:= (\pi_\nu(g)\varphi)|_K & g\in G,\varphi\in I(\nu).
\end{align*}
The restriction of $\rho_\nu$ to $K$ is the left-regular representation on $C^\infty(X)$ and hence, by Proposition~\ref{prop:KtypeDecompositionL2K}, the $K$-finite vectors $C^\infty(X)_{K\textup{-finite}}$ decompose into
\begin{align*}
 C^\infty(X)_{K\textup{-finite}} &\cong \bigoplus_{{\bf m}\geq0}{V^{\bf m}},
\end{align*}
each $K$-type $V^{\bf m}$ appearing with multiplicity one. In what follows we will identify $V^{\bf m}$ with the corresponding subspace in $C^\infty(X)$ or $I(\nu)$ and it will be clear from the context which identification we use. Further note that 
\[
	\langle\varphi,\psi\rangle_{L^2(X)} = \int_K \varphi(k)\overline{\psi(k)} dk
\]
provides a sesquilinear form on $C^\infty(X)$ which is invariant under the representations $\rho_\nu\otimes\rho_{-\overline{\nu}}$, i.e.
\begin{align*}
 \langle\rho_\nu(g)\varphi,\rho_{-\overline{\nu}}(g)\psi\rangle_{L^2(X)} &= \langle\varphi,\psi\rangle_{L^2(X)}, & \varphi,\psi\in C^\infty(X).
\end{align*}

\subsection{The spectrum generating operator}

Let us recall some results from \cite{BOO96}. Denote by $\td\rho_\nu$ the differentiated representation of $\frakg$ on $C^\infty(X)$. Let $\langle\blank,\blank\rangle:=\tfrac{1}{2n}\kappa(\blank,\blank)$ be the renormalization of the Killing form of $\frakg$ such that $\langle Z_0,Z_0\rangle = 1$. On $\frakk$ the bilinear form $-\langle\blank,\blank\rangle$ is positive definite and we choose an orthonormal basis $(T_\alpha)_\alpha\subseteq\frakn_\frakk$ of the form $T_\alpha=N_\alpha+\theta N_\alpha$, $N_\alpha\in\frakn$. Consider the quadratic element
\begin{equation*}
 \calP := \sum_\alpha T_\alpha^2 \in\calU(\frakk).
\end{equation*}
Then by \cite[Lemma 2.2]{BOO96} the right regular action $\calR_\calP$ of $\calP$ leaves $C^\infty(X)$ invariant. This operator is called the ``spectrum generating operator''. In our case we can obviously express $\calP$ in terms of the Casimir elements $C_\frakk\in\calU(\frakk)$ and $C_\frakm\in\calU(\frakm)$ of $\frakk$ and $\frakm$, respectively, taken with respect to the inner product $\langle\blank,\blank\rangle$ (cf. \cite[Remark 2.4]{BOO96}):
\begin{equation*}
 \calP = C_\frakk-C_\frakm.
\end{equation*}
Since the right regular action of $C_\frakm$ on $C^\infty(X)$ is trivial and the right regular action of $C_\frakk$ on $C^\infty(X)$ coincides with the left-regular action we find that the spectrum generating operator $\calR_\calP$ agrees with the Laplacian $\td\rho_\nu(C_\frakk)$ of $X$ which is independent of $\nu$. As Casimir operator, $\td\rho_\nu(C_\frakk)$ acts on each $K$-type $V^{\bf m}$ by the eigenvalue
\begin{align*}
 \pi_{\bf m} &:= -\langle\lambda_{\bf m}+2\rho^\frakk,\lambda_{\bf m}\rangle,
\end{align*}
where we identify $\frakt_\CC^*\cong\frakt_\CC$ via the non-degenerate bilinear form $\langle\blank,\blank\rangle$. A short calculation using \eqref{eq:KillingP} shows that we have $\langle\gamma_j,\gamma_k\rangle=-\frac{n}{p}\delta_{jk}$ and hence
\begin{align}
 \pi_{\bf m} &= \frac{n}{p}\sum_{j=1}^r{(m_j+2\rho_j)m_j} = \frac{n}{p}\sum_{j=1}^r{\left(m_j^2+(p-1-(j-1)d)m_j\right)}.\label{eq:Pim}
\end{align}
Now define a function $\omega:\frakg_\CC\to C^\infty(X)$ by
\[
	\omega(T)(k):=\langle T,\Ad(k)Z_0\rangle.
\]
Since $Z_0\in\frakp_\CC$ and $\frakp_\CC$ is $\Ad(K)$-invariant, we clearly have $\omega|_{\frakk_\CC}=0$. By the invariance of the bilinear form $\langle\blank,\blank\rangle$ we further obtain that $\omega$ is $K$-equivariant. Let $m(\omega(T)):C^\infty(X)\to C^\infty(X)$ be the multiplication operator $\varphi\mapsto\omega(T)\varphi$. Then by \cite[equation (2.1)]{BOO96} we have
\begin{align*}
 (\td\rho_\nu-\td\rho_{\nu'})(T) &= \tfrac{n}{p}(\nu-\nu')m(\omega(T)), & T\in\frakg_\CC.
\end{align*}
For ${\bf m}\geq0$ let $P_{\bf m}:L^2(X)\to V^{\bf m}$ denote the orthogonal projection onto $V^{\bf m}\subseteq L^2(X)$. Then by \cite[Corollary 2.6]{BOO96} we further have for ${\bf m},{\bf n}\geq0$:
\begin{align}
 2P_{\bf n}\circ\td\rho_\nu(T)|_{V^{\bf m}} &= (\tfrac{2n}{p}\nu+\pi_{\bf n}-\pi_{\bf m})P_{\bf n}\circ m(\omega(T))|_{V^{\bf m}}, & T\in\frakp_\CC.\label{eq:GeneralExplicitExpressiondrhoInTermsOfOmega}
\end{align}
Consider the map
\[
	\Phi_{\bf m}:\frakp_\CC\otimes V^{\bf m}\to C^\infty(X),\
	T\otimes\varphi\mapsto m(\omega(T))\varphi\;.
\]
One readily checks that $\Phi_{\bf m}$ is $K$-equivariant, and hence there is a subset $S_{\bf m}\subseteq\Set{{\bf m}\in\ZZ^r}{{\bf m}\geq0}$ such that
\[
	\Phi_{\bf m}(\frakp_\CC\otimes V^{\bf m}) = \bigoplus_{{\bf n}\in S_{\bf m}} V^{\bf n}\;.
\]
To determine $S_{\bf m}$ explicitly we need the following lemma on the spherical vectors $\phi_{\bf m}\in V^{\bf m}$, ${\bf m}\geq 0$. For convenience we set $\phi_{\bf m}:=0$ for ${\bf m}\in\ZZ^r$ not satisfying ${\bf m}\geq 0$.

\begin{lemma}\label{lem:Vretare}
Let $X$ be non-unital and ${\bf m}\geq0$. Then
\begin{align*}
 \omega(Z_0)\phi_{\bf m} &= \sum_{k=1}^r{A({\bf m},k)\phi_{{\bf m}+e_k}}+\sum_{k=1}^r{B({\bf m},k)\phi_{{\bf m}-e_k}}+C({\bf m})\phi_{\bf m},
\end{align*}
where
\begin{align*}
 A({\bf m},k) ={}& \frac{p}{2n}\frac{(2(m_k+\rho_k)+\frac{b}{2}+1)(2(m_k+\rho_k)+\frac{b}{2}+e)}{2(m_k+\rho_k)(2(m_k+\rho_k)+1)}\\
 & \times\prod_{j\neq k}{\frac{((m_k+\rho_k)-(m_j+\rho_j)+\frac{d}{2})((m_k+\rho_k)+(m_j+\rho_j)+\frac{d}{2})}{((m_k+\rho_k)-(m_j+\rho_j))((m_k+\rho_k)+(m_j+\rho_j))}},\\
 B({\bf m},k) ={}& \frac{p}{2n}\frac{(2(m_k+\rho_k)-\frac{b}{2}-1)(2(m_k+\rho_k)-\frac{b}{2}-e)}{2(m_k+\rho_k)(2(m_k+\rho_k)-1)}\\
 & \times\prod_{j\neq k}{\frac{((m_k+\rho_k)+(m_j+\rho_j)-\frac{d}{2})((m_k+\rho_k)-(m_j+\rho_j)-\frac{d}{2})}{((m_k+\rho_k)+(m_j+\rho_j))((m_k+\rho_k)-(m_j+\rho_j))}},\\
 C({\bf m}) ={}& 1-\sum_{\substack{k=1\\{\bf m}+e_k\geq0}}^r{A({\bf m},k)}-\sum_{\substack{k=1\\{\bf m}-e_k\geq0}}^r{B({\bf m},k)}
\end{align*}
as rational functions in ${\bf m}$. In particular,
\begin{multicols}{2}
\begin{itemize}
\item $A({\bf m},k)\neq0$ for ${\bf m}+e_k\geq0$,
\item $B({\bf m},k)\neq0$ for ${\bf m}-e_k\geq0$ or $k=r,m_r=0,\rho_r=\frac{1}{2}$,
\item $A({\bf m},k)=0$ for ${\bf m}+e_k\ngeq0$,
\item $B({\bf m},k)=0$ for ${\bf m}-e_k\ngeq0$ with either $k\neq r$ or $k=r,\rho_r\neq\frac{1}{2}$
\end{itemize}
\end{multicols}
\end{lemma}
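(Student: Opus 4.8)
The plan is to recognize the left-hand side as a product of two zonal spherical functions on the compact symmetric space $X=K/M$ and to expand this product by means of Vretare's recursion formula \cite{Vre76}. First I would identify $\omega(Z_0)$ with the normalized spherical function $\phi_{e_1}$ attached to $e_1=(1,0,\ldots,0)$. Since $M=Z_K(\fraka_P)$ fixes $Z_0$ and the form $\langle\blank,\blank\rangle$ is $\Ad(K)$-invariant, the function $k\mapsto\langle Z_0,\Ad(k)Z_0\rangle$ is invariant under both left and right translation by $M$, hence is a zonal spherical function on $X$. As the restriction to $X\hookrightarrow\frakp$, $kM\mapsto\Ad(k)Z_0$, of a linear functional it has ``degree one'' and zero mean (there are no nonzero $\Ad(K)$-fixed vectors in $\frakp$ since $\frakg$ is simple), so it lies in the degree-one $M$-spherical $K$-type $V^{e_1}$, whose highest weight is $\gamma_1=\lambda_{e_1}$; the value $\omega(Z_0)(x_0)=\langle Z_0,Z_0\rangle=1=\phi_{e_1}(x_0)$ fixes the normalization. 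Thus $\omega(Z_0)\phi_{\bf m}=\phi_{e_1}\phi_{\bf m}$.

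I would then pin down the possible $K$-types. By $K$-equivariance of $\Phi_{\bf m}$ and multiplicity one (Proposition~\ref{prop:KtypeDecompositionL2K}) the product expands as $\phi_{e_1}\phi_{\bf m}=\sum_{\bf n}c_{\bf n}\phi_{\bf n}$ with each $V^{\bf n}$ a constituent of $V^{e_1}\otimes V^{\bf m}$, so that $\lambda_{\bf n}-\lambda_{\bf m}$ is a weight of $V^{e_1}$. The weights of $V^{e_1}$ lie among $0$, $\pm\gamma_k$ and $\tfrac{\pm\gamma_j\pm\gamma_k}{2}$, and since ${\bf m},{\bf n}\in\ZZ^r$ their difference must have integral $\gamma$-coordinates; only $0$ and $\pm\gamma_k$ survive. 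Hence $S_{\bf m}\subseteq\{{\bf m},{\bf m}\pm e_k:1\leq k\leq r\}$, which is exactly the shape of the asserted recursion (the convention $\phi_{\bf m}=0$ off the dominant chamber handling the terms that leave it).

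The substantial step is the explicit evaluation of $A({\bf m},k)=c_{{\bf m}+e_k}$ and $B({\bf m},k)=c_{{\bf m}-e_k}$. Here I would invoke Vretare's formula \cite{Vre76} for multiplication by the fundamental spherical function on a compact symmetric space, which gives these coefficients as rational functions of the shifted weights $m_k+\rho_k$ in terms of the root multiplicities. Inserting the multiplicities $b,d,e$ from Table~\ref{tb:ClassificationConstants}, the genus $p$ from \eqref{eq:DefGenus}, the identity $\langle\gamma_j,\gamma_k\rangle=-\tfrac{n}{p}\delta_{jk}$ and $\rho_k=\tfrac{p-1}{2}-(k-1)\tfrac{d}{2}$ from \eqref{eq:rhoformula}, and carrying out the (routine but lengthy) simplification, produces the displayed products, with the overall constant $\tfrac{p}{2n}$ arising from matching normalizations. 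The diagonal coefficient is then forced: evaluating the identity at $x_0$, where $\phi_{\bf n}(x_0)=1$ for every $\bf n$ and $\omega(Z_0)(x_0)\phi_{\bf m}(x_0)=1$, gives $C({\bf m})=1-\sum_k A({\bf m},k)-\sum_k B({\bf m},k)$ over the admissible shifts. I expect this matching of Vretare's general recursion with the closed forms above to be the principal obstacle, as it demands both the correct normalization of $\phi_{e_1}$ and careful bookkeeping of the multiplicity parameters.

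Finally, the (non-)vanishing assertions follow by inspecting the products. For $A({\bf m},k)$ the factors $2(m_k+\rho_k)+\tfrac{b}{2}+1$, $2(m_k+\rho_k)+\tfrac{b}{2}+e$ and those in $\prod_{j\neq k}$ are positive whenever ${\bf m}+e_k\geq0$ (using $b\neq0$, $d,e\geq0$ and the values of $\rho_k$), while for $k\geq2$ with $m_{k-1}=m_k$ the factor $(m_k+\rho_k)-(m_{k-1}+\rho_{k-1})+\tfrac{d}{2}$ vanishes precisely at the wall where ${\bf m}+e_k$ leaves the dominant chamber, giving $A({\bf m},k)=0$ there. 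The analysis of $B({\bf m},k)$ is analogous: for $k\neq r$ the wall $m_k=m_{k+1}$ kills the factor $(m_k+\rho_k)-(m_{k+1}+\rho_{k+1})-\tfrac{d}{2}$. The only subtlety is the boundary $k=r$, $m_r=0$, where $m_r+\rho_r=\rho_r=\tfrac{e}{2}+\tfrac{b}{4}$ makes the numerator factor $2(m_r+\rho_r)-\tfrac{b}{2}-e$ vanish identically; this forces $B({\bf m},r)=0$ unless the denominator factor $2(m_r+\rho_r)-1$ vanishes simultaneously, i.e.\ unless $\rho_r=\tfrac12$ (which by $2e+b=2$, $b\neq0$ happens exactly in case D.1, where $b=2$, $e=0$), in which case $B({\bf m},r)$ must be read as a nonzero $0/0$-limit. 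This accounts for the exceptional case in the statement.
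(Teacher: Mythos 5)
Your proposal follows essentially the same route as the paper's proof: identify $\omega(Z_0)=\phi_{e_1}$ via $K$-equivariance of $\omega$ and the normalization $\omega(Z_0)(x_0)=1$, expand $\phi_{e_1}\phi_{\bf m}$ by Vretare's theorem (with the possible $K$-types ${\bf m},{\bf m}\pm e_k$ pinned down by integrality of the $\gamma$-coordinates), obtain $C({\bf m})$ from evaluation at the base point, and read off the (non-)vanishing of $A({\bf m},k)$ and $B({\bf m},k)$ factor by factor, including the cancellation of $2(m_r+\rho_r)-\tfrac{b}{2}-e$ against $2(m_r+\rho_r)-1$ at $k=r$, $m_r=0$, $\rho_r=\tfrac12$ (case D.1), exactly as in the paper. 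The one step you leave implicit --- the explicit evaluation of Vretare's coefficients, which the paper performs via the Gindikin--Karpelevich product formula for the $c$-function of $\Delta(\frakk_\CC,\frakt_\CC)$ together with the duplication formula for $\Gamma$, producing in particular the prefactor $c(-i(\gamma_1+\rho^\frakk))/c(-i\rho^\frakk)=\tfrac{p}{2n}$ as a $c$-function ratio rather than a normalization constant --- is a finite computation fully consistent with your plan.
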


\begin{proof}
Note that $\frakp_\CC$ is an irreducible $\frakk$-module and the element $Z_0\in\frakp_\CC$ is $\frakm$-invariant. Moreover, $\gamma_1$ is the highest weight of $\frakp_\CC$, so $\frakp_\CC\cong V^{e_1}$. Now $\omega:\frakp_\CC\to C^\infty(X)$ is $\frakk$-equivariant and therefore $\omega(Z_0)$ is a scalar multiple of the spherical vector $\phi_{e_1}$. Since $\phi_{e_1}(eM)=1=\omega(Z_0)(eM)$ we even have $\omega(Z_0)=\phi_{e_1}$.
Using \cite[Theorem 4.8]{Vre76} we obtain that the claimed expansion of the product $\phi_{e_1}\phi_{\bf m}$ holds with
\begin{align*}
 A({\bf m},k) &= \frac{c(-i(\gamma_1+\rho))c(-iS_k^+({\bf m}+\rho))}{c(-i\rho)c(-i(S_k^+({\bf m}+\rho)+\gamma_1))},\\
 B({\bf m},k) &= \frac{c(-i(\gamma_1+\rho))c(-iS_k^-({\bf m}+\rho))}{c(-i\rho)c(-i(S_k^-({\bf m}+\rho)+\gamma_1))},
\end{align*}
where $c(\lambda)$ denotes the $c$-function of the root system $\Sigma=\Delta(\frakk_\CC,\frakt_\CC)$ and
\begin{align*}
 S_k^\pm(\lambda_1,\ldots,\lambda_r) &= (\pm\lambda_k,\lambda_2,\ldots,\lambda_{k-1},\lambda_1,\lambda_{k+1},\ldots,\lambda_r).
\end{align*}
By the Gindikin--Karpelevich formula
\begin{align*}
 c(\lambda) &= c_0\prod_{\alpha\in\Sigma_0}{\frac{2^{-i\frac{\langle\lambda,\alpha\rangle}{\langle\alpha,\alpha\rangle}}\Gamma(i\frac{\langle\lambda,\alpha\rangle}{\langle\alpha,\alpha\rangle})}{\Gamma(\frac{1}{2}(\frac{1}{2}\mu_\alpha+1+i\frac{\langle\lambda,\alpha\rangle}{\langle\alpha,\alpha\rangle}))\Gamma(\frac{1}{2}(\frac{1}{2}\mu_\alpha+\mu_{2\alpha}+i\frac{\langle\lambda,\alpha\rangle}{\langle\alpha,\alpha\rangle}))}}
\end{align*}
with some constant $c_0\neq0$ and $\Sigma_0:=\{\alpha\in\Sigma^+:\frac{\alpha}{2}\notin\Sigma\}$. Since $\Sigma$ is of type $B$ or $BC$ we have
\begin{equation*}
 \Sigma_0 = \set{\tfrac{\gamma_j}{2}}{1\leq j\leq r}
 	\cup\set{\tfrac{\gamma_j\pm\gamma_k}{2}}{1\leq j<k\leq r}
\end{equation*} 
with multiplicities
\begin{align*}
 \mu_{\frac{\gamma_j}{2}} &= b, & 
 \mu_{\frac{\gamma_j-\gamma_k}{2}} &= \mu_{\frac{\gamma_j+\gamma_k}{2}} = d, & 
 \mu_{\gamma_j} &= e.
\end{align*}
Using the identity $\Gamma(z)\Gamma(z+\frac{1}{2})=\sqrt{\pi}2^{1-2z}\Gamma(2z)$ for the gamma function we obtain the following general expression for the $c$-function at $\lambda=\sum_{j=1}^r{\lambda_j\gamma_j}$:
\begin{multline*}
 c(\lambda) = c_0'\prod_{j=1}^r{\frac{2^{-2i\lambda_j}\Gamma(2i\lambda_j)}{\Gamma(\frac{1}{2}(\frac{b}{2}+1+2i\lambda_j))\Gamma(\frac{1}{2}(\frac{b}{2}+e+2i\lambda_j))}}\\
 \times\prod_{1\leq j<k\leq r}{\frac{\Gamma(i(\lambda_j-\lambda_k))\Gamma(i(\lambda_j+\lambda_k))}{\Gamma(\frac{d}{2}+i(\lambda_j-\lambda_k))\Gamma(\frac{d}{2}+i(\lambda_j+\lambda_k))}}
\end{multline*}
with some constant $c_0'\neq0$. With \eqref{eq:rhoformula} we find
\begin{align*}
 & \frac{c(-i(\gamma_1+\rho))}{c(-i\rho)}\\
 ={}& \frac{(2\rho_1)(2\rho_1+1)}{(2\rho_1+\frac{b}{2}+1)(2\rho_1+\frac{b}{2}+e)}\prod_{j>1}{\frac{(\rho_1-\rho_j)(\rho_1+\rho_j)}{(\rho_1-\rho_j+\frac{d}{2})(\rho_1+\rho_j+\frac{d}{2})}}\\
 ={}& \frac{(2\rho_1+1)}{2r(2\rho_1+\frac{b}{2}+1)} = \frac{p}{2n}
\intertext{and}
 & \frac{c(-iS_k^\pm({\bf m}+\rho))}{c(-i(S_k^\pm({\bf m}+\rho)+\gamma_1))}\\
 ={}& \frac{(\pm2(m_k+\rho_k)+\frac{b}{2}+1)(\pm2(m_k+\rho_k)+\frac{b}{2}+e)}{\pm2(m_k+\rho_k)(\pm2(m_k+\rho_k)+1)}\\
 & \vspace{3cm}\times\prod_{j\neq k}{\frac{(\pm(m_k+\rho_k)-(m_j+\rho_j)+\frac{d}{2})(\pm(m_k+\rho_k)+(m_j+\rho_j)+\frac{d}{2})}{(\pm(m_k+\rho_k)-(m_j+\rho_j))(\pm(m_k+\rho_k)+(m_j+\rho_j))}}
\end{align*}
and thus the formulas above follow. We now prove the remaining claims for $B({\bf m},k)$, the corresponding results for $A({\bf m},k)$ are shown similarly. First note that we can rewrite $B({\bf m},k)$ as follows:
\begin{align*}
 & \frac{p}{2n}\frac{(2(m_k+\rho_k)-\frac{b}{2}-1)(2(m_k+\rho_k)-\frac{b}{2}-e)}{2(m_k+\rho_k)(2(m_k+\rho_k)-1)}\\
 & \times\prod_{j<k}{\frac{(m_j+\rho_j)-(m_k+\rho_k)+\frac{d}{2}}{(m_j+\rho_j)-(m_k+\rho_k)}}\prod_{j>k}{\frac{(m_k+\rho_k)-(m_j+\rho_j)-\frac{d}{2}}{(m_k+\rho_k)-(m_j+\rho_j)}}\\
 & \times\prod_{j\neq k}{\frac{(m_k+\rho_k)+(m_j+\rho_j)-\frac{d}{2})}{(m_k+\rho_k)+(m_j+\rho_j)}}.
\end{align*}
Recall from \eqref{eq:rhoformula} that $\rho_j=(r-j)\tfrac{d}{2}+\tfrac{e}{2}+\tfrac{b}{4}\geq\tfrac{b}{4}\geq\frac{1}{4}>0$ for every $j=1,\ldots,r$. Further, in the case $r>1$ we have $\rho_j-\rho_\ell=(\ell-j)\frac{d}{2}\geq\frac{d}{2}>0$ for $j<\ell$. Therefore, for ${\bf m}\geq0$ the denominator of $B({\bf m},k)$ is always non-zero if $2(m_k+\rho_k)-1\neq0$. Since $m_k\in\NN_0$ we can only have $2(m_k+\rho_k)-1=0$ for $m_k=0,\rho_k=\frac{1}{2}$. Further, since $\rho_r=\frac{e}{2}+\frac{b}{4}\geq\frac{1}{4}$ and $\rho_k\geq\frac{d}{2}+\frac{b}{4}\geq\frac{3}{4}$ for $k\neq r$ we can only have $\rho_k=\frac{1}{2}$ for $k=r$ and $b=2,e=0$. In this case, i.e. $k=r,m_r=0,\rho_r=\frac{1}{2}$, the factor $2(m_r+\rho_r)-\frac{b}{2}-e$ in the numerator cancels with the factor $2(m_r+\rho_r)-1$ in the denominator. It is further easy to see that all other factors in the numerator are non-zero, so in this case $B({\bf m},r)\neq0$. If we suppose that ${\bf m},{\bf m}-e_k\geq0$, i.e.
\begin{equation*}
 m_1\geq\ldots\geq m_k>m_k-1\geq m_{k+1}\geq\cdots\geq m_r\geq0,
\end{equation*}
using the convention $m_{r+1}=0$, then all terms in the numerator and denominator are non-zero and hence again $B({\bf m},k)\neq0$. Finally, suppose that ${\bf m}-e_k\ngeq0$, i.e. $m_k=m_{k+1}$. If $k\neq r$ then the factor $(m_k+\rho_k)-(m_j+\rho_j)-\frac{d}{2}$ in the numerator vanishes for $j=k+1$. If $k=r,\rho_r\neq\frac{1}{2}$ then $m_r=0$ and the factor $2(m_k+\rho_k)-\frac{b}{2}-e$ in the numerator vanishes. This completes the proof.
\end{proof}

To determine whether ${\bf m}\in S_{\bf m}$ we need another result on the tensor product of representations occuring in $C^\infty(X)$.

\begin{lemma}[{see \cite[Proposition 3.1]{Joh92}}]\label{lem:MultOnXForSphericalVectors}
Let $V_1,V_2,W\subseteq C^\infty(X)$ be irreducible representations of $K$ such that $W\subseteq V_1\cdot V_2$, the subspace of $C^\infty(X)$ spanned by products of functions in $V_1$ and functions in $V_2$. Denote by $P:L^2(X)\to W$ the orthogonal projection onto $W$. Then the map
$$ \Phi:V_1\otimes V_2\to W, \quad (\varphi_1,\varphi_2)\mapsto P(\varphi_1\cdot\varphi_2) $$
satisfies $\Phi(V_1^M\otimes V_2^M)\neq0$.
\end{lemma}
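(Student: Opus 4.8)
The plan is to reduce the assertion to the non-vanishing of a single triple integral over $X$. First, by Lemma~\ref{lem:CartanHelgason} (see also Proposition~\ref{prop:KtypeDecompositionL2K}) each of $V_1^M$ and $V_2^M$ is one-dimensional, spanned by the normalized spherical vectors $\phi_1,\phi_2$ with $\phi_i(x_0)=1$. Hence $V_1^M\otimes V_2^M=\CC\,(\phi_1\otimes\phi_2)$, and the claim $\Phi(V_1^M\otimes V_2^M)\neq0$ is equivalent to $P(\phi_1\cdot\phi_2)\neq0$. Since $\phi_1\cdot\phi_2$ is $M$-invariant and $P$ is $K$-equivariant, $P(\phi_1\cdot\phi_2)$ lies in the one-dimensional space $W^M=\CC\,\phi_W$, so $P(\phi_1\cdot\phi_2)=c_W\,\phi_W$ with $c_W=\langle\phi_1\cdot\phi_2,\phi_W\rangle/\|\phi_W\|_{L^2(X)}^2$. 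Everything therefore comes down to showing that the coefficient of $\phi_W$ in the expansion of the product $\phi_1\cdot\phi_2$ into spherical functions is non-zero, i.e.\ that $\langle\phi_1\cdot\phi_2,\phi_W\rangle=\int_X\phi_1\phi_2\overline{\phi_W}\td x\neq0$.

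Next I would make this integral transparent by passing to zonal spherical functions. Writing $\phi_V(kM)=\langle\pi_V(k)u_V,u_V\rangle$ for a unit $M$-spherical vector $u_V\in V$ (with $\pi_V$ the $K$-action), and using that the integrand is $M$-bi-invariant, the integral equals $\int_K\langle\pi_1(k)u_1,u_1\rangle\langle\pi_2(k)u_2,u_2\rangle\overline{\langle\pi_W(k)u_W,u_W\rangle}\td k$, a triple matrix coefficient of $V_1\otimes V_2$ paired against $W$. By Schur orthogonality only the copies of $W$ inside the abstract tensor product $V_1\otimes V_2$ survive, and a direct computation collapses the integral to a positive multiple of $\|\mathrm{pr}_W(\phi_1\otimes\phi_2)\|^2$, where $\mathrm{pr}_W$ is the orthogonal projection of $V_1\otimes V_2$ onto its $W$-isotypic component. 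In particular $c_W\geq0$ (so all linearization coefficients of $\phi_1\cdot\phi_2$ are non-negative), and the lemma reduces to the \emph{strict} positivity $\mathrm{pr}_W(\phi_1\otimes\phi_2)\neq0$, i.e.\ to the statement that the spherical tensor $\phi_1\otimes\phi_2$ has a non-zero component in the $W$-isotypic part of $V_1\otimes V_2$.

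The hard part will be precisely this strict positivity, and I expect it to be the only genuine obstacle. The hypothesis $W\subseteq V_1\cdot V_2$ does guarantee, via $K$-equivariance of the multiplication map $m\colon V_1\otimes V_2\to C^\infty(X)$ together with averaging over $M$, that the $W$-isotypic part of $V_1\otimes V_2$ carries non-zero $M$-invariants (for instance $m^*\phi_W$); but it does not formally force $\phi_1\otimes\phi_2$ to meet them, because $(V_1\otimes V_2)^M$ is in general strictly larger than the line $V_1^M\otimes V_2^M$. Closing this gap is genuinely a feature of the symmetric space $X=K/M$ and not of the Gelfand pair $(K,M)$ alone. I would close it in one of two ways: either by invoking the positivity and completeness of the linearization coefficients of spherical functions on compact symmetric spaces, so that $W\subseteq V_1\cdot V_2$ forces $c_W>0$ (in the rank-one situation this is the classical positivity of the Jacobi/Gegenbauer linearization); or, for the case actually needed here where $V_1=\frakp_\CC\cong V^{e_1}$, by reading the coefficient off the explicit product formula of Lemma~\ref{lem:Vretare}, whose coefficients $A({\bf m},k)$, $B({\bf m},k)$ and $C({\bf m})$ are shown there to be non-zero exactly on the admissible indices, thereby exhibiting $c_W\neq0$ directly.
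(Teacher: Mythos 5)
Your reduction is correct, and up to a point it is a legitimate alternative to the paper's argument: since $V_i^M$ is one-dimensional (Proposition~\ref{prop:KtypeDecompositionL2K}), the claim is equivalent to the non-vanishing of the coefficient $c_W$ of $\phi_W$ in $\phi_1\phi_2$, and your Schur-orthogonality computation showing $c_W$ is a positive multiple of $\|\pr_W(\phi_1\otimes\phi_2)\|^2$ (using $\dim W^M=1$ in each constituent) is sound. But the proof stops exactly at the step that \emph{is} the lemma, and neither of your proposed closures supplies it. The first --- ``invoking the positivity and completeness of the linearization coefficients'' --- is question-begging: the classical positivity gives only $c_W\geq0$, and the assertion that $W\subseteq V_1\cdot V_2$ forces \emph{strict} positivity is precisely the statement to be proved, not a citable fact in this generality. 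The second is circular within this paper and factually off: Lemma~\ref{lem:Vretare} establishes non-vanishing only for $A({\bf m},k)$ and $B({\bf m},k)$, \emph{not} for $C({\bf m})$; indeed $C({\bf m})$ does vanish for infinitely many ${\bf m}$ (Lemmas~\ref{lem:CmForRank2exceptional} and \ref{lem:CmForSO}), and the entire role of the present lemma in Corollary~\ref{cor:Sm} is to convert $C({\bf m})=0$ into ${\bf m}\notin S_{\bf m}$. You cannot read the needed non-vanishing off that formula without assuming what is to be shown.

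The gap can in fact be closed cheaply inside your framework, using multiplicity one for the Gelfand pair $(K,M)$ rather than any deep positivity theorem. Since $\dim(V_i^*)^M=1$, the copy of $V_i$ in $C^\infty(X)$ consists exactly of the matrix coefficients $kM\mapsto\langle v_i,\pi_i(k)u_i\rangle$ with the \emph{fixed} spherical vector $u_i$; hence every product $\varphi_1\varphi_2$ is a matrix coefficient of $\pi_1\otimes\pi_2$ against the fixed vector $u_1\otimes u_2$. Decomposing $u_1\otimes u_2=\sum_j a_j$ over the irreducible constituents, the projection of $\varphi_1\varphi_2$ onto the unique copy of $W$ in $L^2(X)$ involves only the constituents isomorphic to $W$ and only through the vectors $a_j$ (each $a_j$ being $M$-invariant, hence proportional to the spherical vector of its constituent). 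So $\pr_W(u_1\otimes u_2)=0$ would force $P(V_1\cdot V_2)=0$, contradicting $W\subseteq V_1\cdot V_2$; this yields $c_W>0$ and finishes your argument. Note that the paper proceeds quite differently: it works in $L^2(K)$ via Peter--Weyl, factors the $(K\times K)$-equivariant multiply-and-project map as $\Psi_1\boxtimes\Psi_2$ with $\Psi_1$ proportional to $\Phi$, and then repeats the argument for the contragredients ($V^*\cong\overline V\subseteq C^\infty(X)$) to bootstrap the non-vanishing of $\Psi_1$ on $V_1^M\otimes V_2^M$ from that of $\Psi_2$ on $(V_1^*)^M\otimes(V_2^*)^M$; your route, once repaired as above, is a genuine alternative, but as submitted it names the crux and then defers it to two arguments that do not exist.
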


\begin{proof}
First note that for an irreducible $K$-representation $V\subseteq C^\infty(X)$ we can identify its contragredient $V^*$ with the subrepresentation $\overline{V}\subseteq C^\infty(X)$ given by complex conjugates of functions in $V$. Then $W\subseteq V_1\cdot V_2$ clearly implies $\overline{W}\subseteq\overline{V}_1\cdot\overline{V}_2$ and hence the assumptions are also satisfied for the contragredient representations $V_1^*$, $V_2^*$ and $W^*$.\par
By the Peter--Weyl Theorem we have
$$ L^2(K)_{(K\times K)\textup{-finite}} \cong \bigoplus_{\pi\in\widehat{K}} \pi\boxtimes\pi^* $$
as $K\times K$-representations, identifying $V_1\boxtimes V_1^*$, $V_2\boxtimes V_2^*$ and $W\boxtimes W^*$ naturally with subspaces of $L^2(K)$. Viewing functions on $X=K/M$ as functions on $K$ which are right-invariant under $M$ we have the natural isomorphism
$$ L^2(X)_{K\textup{-finite}} \cong \bigoplus_{\pi\in\widehat{K}} \pi\otimes(\pi^*)^M. $$
Since $V_1,V_2,W\subseteq C^\infty(X)$ there are non-zero $M$-invariant functionals $\mu_1\in(V_1^*)^M$, $\mu_2\in(V_2^*)^M$ and $\nu\in(W^*)^M$ such that the maps
\begin{align*}
 V_1 &\to C^\infty(X), & \phi_1 &\mapsto(kM\mapsto\langle k\cdot\mu_1,\phi_1\rangle),\\
 V_2 &\to C^\infty(X), & \phi_2 &\mapsto(kM\mapsto\langle k\cdot\mu_2,\phi_2\rangle),\\
 W &\to C^\infty(X), & \psi &\mapsto(kM\mapsto\langle k\cdot\nu,\psi\rangle)
\end{align*}
are the identity on $V_1$, $V_2$ and $W$, respectively. Consider the map
$$ \Psi:(V_1\boxtimes V_1^*)\otimes(V_2\boxtimes V_2^*)\to W\boxtimes W^* $$
given by multiplication of two functions in $V_1\boxtimes V_1^*$ and $V_2\boxtimes V_2^*$ on $K$ followed by orthogonal projection onto the component $W\boxtimes W^*\subseteq L^2(K)$. Since $\Psi$ is $(K\times K)$-equivariant, $W$ is irreducible and
$$ (V_1\boxtimes V_1^*)\otimes(V_2\boxtimes V_2^*) \cong (V_1\otimes V_2)\boxtimes(V_1^*\otimes V_2^*) $$
the map $\Psi$ is given by the outer tensor product of a map $\Psi_1:V_1\otimes V_2\to W$ and a map $\Psi_2:V_1^*\otimes V_2^*\to W^*$, i.e.
$$ \Psi = \Psi_1\boxtimes\Psi_2. $$
We claim that $\Psi_1$ is up to a constant equal to the map $\Phi$. In fact, plugging in the non-zero element $0\neq\mu=\mu_1\otimes\mu_2\in(V_1^*)^M\otimes(V_2^*)^M$ we find
$$ \Psi_1(\phi\otimes\psi)\boxtimes\Psi_2(\mu) = \Psi((\phi\otimes\psi)\boxtimes\mu) = \Psi((\phi\boxtimes\mu_1)\otimes(\psi\boxtimes\mu_2)). $$
The right hand side is the orthogonal projection of the product of the $M$-invariant functions $\phi\boxtimes\mu_1$ and $\psi\boxtimes\mu_2$ on $K$ onto $W\boxtimes W^*$ and hence equal to $\Phi(\phi\otimes\psi)\boxtimes\nu$. We find that $\Psi_1(\phi\otimes\psi)=C\cdot\Phi(\phi\otimes\psi)$ and $\Psi_2(\mu)=C^{-1}\cdot\nu$ for some constant $C\neq0$. In particular, $\Psi_2((V_1^*)^M\otimes(V_2^*)^M)\neq0$. Repeating the same argument for the contragredient representations we find that also $\Psi_1(V_1^M\otimes V_2^M)\neq0$ and the proof is complete.
\end{proof}

\begin{corollary}\label{cor:Sm}
For each ${\bf m}\geq0$ we have
\begin{equation*}
 \Set{{\bf m}\pm e_j\geq0}{1\leq j\leq r}\subseteq S_{\bf m}\subseteq
 \Set{{\bf m},{\bf m}\pm e_j\geq0}{1\leq j\leq r}.
\end{equation*}
Further, ${\bf m}\in S_{\bf m}$ if and only if $C({\bf m})\neq0$.
\end{corollary}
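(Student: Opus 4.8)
The plan is to identify the image of $\Phi_{\bf m}$ with the function-product space $V^{e_1}\cdot V^{\bf m}\subseteq C^\infty(X)$ and then to read off its $K$-type content from the two preceding lemmas. Since $\omega:\frakp_\CC\to C^\infty(X)$ is $K$-equivariant, nonzero (as $\omega(Z_0)=\phi_{e_1}$ by Lemma~\ref{lem:Vretare}), and $\frakp_\CC\cong V^{e_1}$ is irreducible, the map $\omega$ embeds $\frakp_\CC$ onto the (multiplicity-one) copy of $V^{e_1}$ in $C^\infty(X)$; hence $\Phi_{\bf m}(\frakp_\CC\otimes V^{\bf m})=\omega(\frakp_\CC)\cdot V^{\bf m}=V^{e_1}\cdot V^{\bf m}$, so that $S_{\bf m}$ is precisely the set of ${\bf n}\geq0$ with $V^{\bf n}\subseteq V^{e_1}\cdot V^{\bf m}$.

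For the lower inclusion I would apply the orthogonal projections $P_{{\bf m}\pm e_k}$ to the identity of Lemma~\ref{lem:Vretare}. Since $\omega(Z_0)\phi_{\bf m}=\Phi_{\bf m}(Z_0\otimes\phi_{\bf m})$ lies in the image, and the $\phi_{{\bf m}\pm e_k}$ span the one-dimensional $M$-spherical lines of the pairwise inequivalent $K$-types $V^{{\bf m}\pm e_k}$, one reads off $P_{{\bf m}+e_k}(\omega(Z_0)\phi_{\bf m})=A({\bf m},k)\phi_{{\bf m}+e_k}$ and $P_{{\bf m}-e_k}(\omega(Z_0)\phi_{\bf m})=B({\bf m},k)\phi_{{\bf m}-e_k}$. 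By Lemma~\ref{lem:Vretare} we have $A({\bf m},k)\neq0$ whenever ${\bf m}+e_k\geq0$ and $B({\bf m},k)\neq0$ whenever ${\bf m}-e_k\geq0$, so each such $V^{{\bf m}\pm e_k}$ occurs in the image, giving $\{{\bf m}\pm e_j\geq0\}\subseteq S_{\bf m}$.

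For the upper inclusion and the final assertion I would invoke Lemma~\ref{lem:MultOnXForSphericalVectors} with $V_1=V^{e_1}$ and $V_2=V^{\bf m}$. Because the $M$-spherical spaces $V_1^M=\CC\phi_{e_1}$ and $V_2^M=\CC\phi_{\bf m}$ are one-dimensional (Proposition~\ref{prop:KtypeDecompositionL2K}), the map $\Phi$ of that lemma sends $\phi_{e_1}\otimes\phi_{\bf m}$ to $P_{\bf n}(\phi_{e_1}\phi_{\bf m})=P_{\bf n}(\omega(Z_0)\phi_{\bf m})$. If ${\bf n}\in S_{\bf m}$, i.e.\ $V^{\bf n}\subseteq V^{e_1}\cdot V^{\bf m}$, the lemma guarantees this projection is nonzero; conversely, if it is nonzero then $V^{\bf n}$ meets the product space and hence ${\bf n}\in S_{\bf m}$. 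Thus ${\bf n}\in S_{\bf m}$ if and only if $P_{\bf n}(\omega(Z_0)\phi_{\bf m})\neq0$. The expansion in Lemma~\ref{lem:Vretare} shows this projection vanishes for every ${\bf n}\notin\{{\bf m},{\bf m}\pm e_k\}$, yielding $S_{\bf m}\subseteq\{{\bf m},{\bf m}\pm e_j\geq0\}$; and for ${\bf n}={\bf m}$ the projection equals $C({\bf m})\phi_{\bf m}$, so ${\bf m}\in S_{\bf m}$ if and only if $C({\bf m})\neq0$.

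The step I expect to be most delicate is the use of Lemma~\ref{lem:MultOnXForSphericalVectors}: it is what converts the purely spherical computation of Lemma~\ref{lem:Vretare} into a statement about full $K$-type multiplicities, and the argument crucially relies on the one-dimensionality of the $M$-spherical spaces so that ``the $M$-spherical projection is nonzero'' is equivalent to ``the $K$-type occurs.'' Everything else reduces to bookkeeping with the explicit coefficients $A({\bf m},k)$, $B({\bf m},k)$ and $C({\bf m})$.
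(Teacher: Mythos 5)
Your proposal is correct, and two of its three parts coincide with the paper's proof: the lower inclusion is read off from the nonvanishing of $A({\bf m},k)$ and $B({\bf m},k)$ in Lemma~\ref{lem:Vretare} exactly as in the paper, and the criterion ``${\bf m}\in S_{\bf m}$ iff $C({\bf m})\neq0$'' is obtained, as in the paper, from Lemma~\ref{lem:MultOnXForSphericalVectors} with $V_1=\omega(\frakp_\CC)=V^{e_1}$ and $V_2=W=V^{\bf m}$. Where you genuinely diverge is the upper inclusion. The paper proves it by a tensor-product weight argument: since the image of $\Phi_{\bf m}$ is a quotient of $\frakp_\CC\otimes V^{\bf m}$, it cites \cite[Proposition 2.1]{Joh88} to say that any highest weight occurring in $\frakp_\CC\otimes V^{\bf m}$ is $\lambda_{\bf m}$ plus a weight of $\frakp_\CC$, and then uses the list of restricted weights of $\frakp_\CC$ together with Proposition~\ref{prop:KtypeDecompositionL2K} to see that the only spherical candidates are ${\bf n}={\bf m}$ or ${\bf m}\pm e_j$. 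You instead upgrade Lemma~\ref{lem:MultOnXForSphericalVectors} to the clean equivalence ``${\bf n}\in S_{\bf m}$ iff $P_{\bf n}(\phi_{e_1}\phi_{\bf m})\neq0$'' (valid precisely because the $M$-spherical lines are one-dimensional and the $K$-types occur with multiplicity one, as you note) and then read the support of $\phi_{e_1}\phi_{\bf m}$ off the expansion in Lemma~\ref{lem:Vretare}. This is a legitimate and rather elegant unification --- all three assertions of the corollary follow from one equivalence --- but be aware of what it costs: your upper inclusion relies on the \emph{completeness} of the expansion in Lemma~\ref{lem:Vretare}, i.e.\ on the equality sign there, which ultimately comes from Vretare's theorem \cite[Theorem 4.8]{Vre76} and encodes essentially the same weight-support restriction that the paper invokes directly via \cite{Joh88}. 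The paper's route has the advantage of being independent of the precise form of the spherical expansion (it would survive even if Lemma~\ref{lem:Vretare} only asserted an inclusion of supports), while yours avoids the external tensor-product result and makes the logical role of Lemma~\ref{lem:MultOnXForSphericalVectors} --- converting spherical-vector computations into $K$-type multiplicity statements --- fully explicit. No gap either way; this is a sound alternative proof.
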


\begin{proof}
For every ${\bf n}\in S_{\bf m}$ the $K$-module $V^{\bf n}$ appears in the tensor product $\frakp_\CC\otimes V^{\bf m}$. By \cite[Proposition 2.1]{Joh88} the highest weight of a subrepresentation of $\frakp_\CC\otimes V^{\bf m}$ is the sum of the highest weight of $V^{\bf m}$ and a weight of $\frakp_\CC$. Since the restricted weights of $\frakp_\CC$ are contained in the set $\set{\frac{\pm\gamma_j\pm\gamma_k}{2}}{1\leq j,k\leq r}\cup\set{\pm\frac{\gamma_j}{2}}{1\leq j\leq r}$ the only possible spherical representations $V^{\bf n}$ contained in $\frakp_\CC\otimes V^{\bf m}$ satisfy by Proposition~\ref{prop:KtypeDecompositionL2K} either ${\bf n}={\bf m}$ or ${\bf n}={\bf m}\pm e_j$ for some $1\leq j\leq r$. Hence the second inclusion follows. The first inclusion is an immediate consequence of Lemma~\ref{lem:Vretare}.\\
Now if $C({\bf m})\neq0$ then clearly ${\bf m}\in S_{\bf m}$. It remains to show that ${\bf m}\in S_{\bf m}$ implies $C({\bf m})\neq0$ which is simply the statement of Lemma~\ref{lem:MultOnXForSphericalVectors} for $V_1=\omega(\frakp_\CC)=V^{\gamma_1}$ and $V_2=W=V^{\bf m}$.
\end{proof}

\begin{proposition}\label{prop:ExplicitFormularhosInTermsOfOmega}
For ${\bf m}\geq0$ we have
\begin{align*}
 \td\rho_\nu(\frakg_\CC)V^{\bf m} \subseteq \bigoplus_{{\bf n}\in S_{\bf m}}V^{\bf n}.
\end{align*}
More precisely, we have for $T\in\frakp_\CC$
\begin{align*}
 2P_{{\bf m}+e_j}\circ\td\rho_\nu(T)|_{V^{\bf m}} &= \tfrac{2n}{p}(\nu+{\bf m}^+(j))P_{{\bf m}+e_j}\circ m(\omega(T))|_{V^{\bf m}},\\
 2P_{{\bf m}-e_j}\circ\td\rho_\nu(T)|_{V^{\bf m}} &= \tfrac{2n}{p}(\nu-{\bf m}^-(j))P_{{\bf m}-e_j}\circ m(\omega(T))|_{V^{\bf m}},\\
 2P_{\bf m}\circ\td\rho_\nu(T)|_{V^{\bf m}} &= \tfrac{2n}{p}\nu P_{\bf m}\circ m(\omega(T))|_{V^{\bf m}},
\end{align*}
where
\begin{align*}
 {\bf m}^+(j) &:= m_j+\frac{p}{2}-(j-1)\frac{d}{2}, & {\bf m}^-(j) &:=(m_j-1)+\frac{p}{2}-(j-1)\frac{d}{2}.
\end{align*}
\end{proposition}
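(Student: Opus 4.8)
The plan is to read off everything from the master identity \eqref{eq:GeneralExplicitExpressiondrhoInTermsOfOmega} together with the explicit eigenvalue formula \eqref{eq:Pim} for $\pi_{\bf m}$, after splitting $\frakg_\CC=\frakk_\CC\oplus\frakp_\CC$. For $T\in\frakk_\CC$ nothing has to be computed: since $\omega$ vanishes on $\frakk_\CC$ (because $\frakp_\CC$ is $\Ad(K)$-invariant), the relation $(\td\rho_\nu-\td\rho_{\nu'})(T)=\tfrac{n}{p}(\nu-\nu')\,m(\omega(T))$ shows that $\td\rho_\nu(T)$ is independent of $\nu$ and equals the differentiated left-regular $K$-action, which preserves each $K$-type $V^{\bf m}$. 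Thus the entire content of the proposition concerns $T\in\frakp_\CC$, for which \eqref{eq:GeneralExplicitExpressiondrhoInTermsOfOmega} already expresses $P_{\bf n}\circ\td\rho_\nu(T)|_{V^{\bf m}}$ as a scalar multiple of $P_{\bf n}\circ m(\omega(T))|_{V^{\bf m}}$.

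For the inclusion I would argue as follows. By the defining property of $S_{\bf m}$ the operator $P_{\bf n}\circ m(\omega(T))|_{V^{\bf m}}$ is the ${\bf n}$-component of $\Phi_{\bf m}(T\otimes\,\cdot\,)$ and hence vanishes for every ${\bf n}\notin S_{\bf m}$; by \eqref{eq:GeneralExplicitExpressiondrhoInTermsOfOmega} the same is then true of $P_{\bf n}\circ\td\rho_\nu(T)|_{V^{\bf m}}$. Consequently $\td\rho_\nu(\frakp_\CC)V^{\bf m}\subseteq\bigoplus_{{\bf n}\in S_{\bf m}}V^{\bf n}$, and adjoining the $\frakk_\CC$-contribution (which stays in $V^{\bf m}$) gives the claimed inclusion. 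Corollary~\ref{cor:Sm} furthermore pins $S_{\bf m}$ down to lie inside $\{{\bf m}\}\cup\{{\bf m}\pm e_j:1\le j\le r,\ {\bf m}\pm e_j\ge0\}$, so that only the three displayed families of projections can be nonzero.

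The one genuinely computational step is to evaluate the scalar $\tfrac{2n}{p}\nu+\pi_{\bf n}-\pi_{\bf m}$ appearing in \eqref{eq:GeneralExplicitExpressiondrhoInTermsOfOmega} for ${\bf n}={\bf m}+e_j$, ${\bf n}={\bf m}-e_j$ and ${\bf n}={\bf m}$. Writing $c_j:=p-1-(j-1)d$ so that $\pi_{\bf m}=\tfrac{n}{p}\sum_i(m_i^2+c_im_i)$ by \eqref{eq:Pim}, all summands with index $\ne j$ cancel in the difference, and an elementary computation gives $\pi_{{\bf m}+e_j}-\pi_{\bf m}=\tfrac{n}{p}(2m_j+c_j+1)=\tfrac{2n}{p}\,{\bf m}^+(j)$ and $\pi_{{\bf m}-e_j}-\pi_{\bf m}=\tfrac{n}{p}(-2m_j-c_j+1)=-\tfrac{2n}{p}\,{\bf m}^-(j)$, while the case ${\bf n}={\bf m}$ is immediate. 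Substituting back into \eqref{eq:GeneralExplicitExpressiondrhoInTermsOfOmega} yields the three displayed identities.

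No deep obstacle arises: the statement is essentially a corollary of the spectrum-generating formalism, and the only real work is the elementary simplification of $\pi_{{\bf m}\pm e_j}-\pi_{\bf m}$ against the definitions of ${\bf m}^\pm(j)$. The point demanding care is purely organizational—keeping the $\frakk_\CC$-part (responsible for the ${\bf n}={\bf m}$ summand) apart from the $\frakp_\CC$-part, and invoking the description of $S_{\bf m}$ from Corollary~\ref{cor:Sm} to discard all $K$-types other than ${\bf m}$ and ${\bf m}\pm e_j$.
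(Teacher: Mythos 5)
Your proposal is correct and takes essentially the same route as the paper: the inclusion is read off from Corollary~\ref{cor:Sm} and the vanishing of $P_{\bf n}\circ m(\omega(T))|_{V^{\bf m}}$ for ${\bf n}\notin S_{\bf m}$, while the three displayed identities come from substituting the differences $\pi_{{\bf m}\pm e_j}-\pi_{\bf m}$ computed from \eqref{eq:Pim} into \eqref{eq:GeneralExplicitExpressiondrhoInTermsOfOmega}, exactly as in the paper (which records only the $+e_j$ case \eqref{eq:DifferencePim} and leaves the $-e_j$ case implicit), and your algebra checks out. Your separate treatment of $T\in\frakk_\CC$ is a small addition the paper omits; just note it inherits the paper's own imprecision, since when $C({\bf m})=0$ one has ${\bf m}\notin S_{\bf m}$, so the $\frakk_\CC$-contribution, which lies in $V^{\bf m}$, is then not literally contained in $\bigoplus_{{\bf n}\in S_{\bf m}}V^{\bf n}$.
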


\begin{proof}
The first claim follows directly from Corollary~\ref{cor:Sm}. For the explicit formulas we use \eqref{eq:Pim} and \eqref{eq:GeneralExplicitExpressiondrhoInTermsOfOmega} and observe that
\begin{align}
 \pi_{{\bf m}+e_j} -\pi_{\bf m} &= \frac{2n}{p}\left(m_j+\frac{p}{2}-(j-1)\frac{d}{2}\right).\label{eq:DifferencePim}
\end{align}
This completes the proof.
\end{proof}

\section{Reducibility and composition series}\label{sec:CompositionSeries}

We now determine all points of reducibility for $I(\nu)$ and find the complete composition series in these cases.

\subsection{Reducibility}

Recall the constants $A({\bf m},j)$, $B({\bf m},j)$, $C({\bf m})$ from Lemma~\ref{lem:Vretare} and ${\bf m}^+(j)$, ${\bf m}^-(j)$ from Proposition~\ref{prop:ExplicitFormularhosInTermsOfOmega}.

\begin{corollary}\label{cor:ActionOfH0OnSphericalVectors}
For every ${\bf m}\geq0$ and $s\in\RR$ we have
\begin{multline}
 \td\rho_\nu(Z_0)\phi_{\bf m} = \frac{2n}{p}\sum_{k=1}^r{\Big(A({\bf m},k)(\nu+{\bf m}^+(k))\phi_{{\bf m}+e_k}}\\
 +B({\bf m},k)(\nu-{\bf m}^-(k))\phi_{{\bf m}-e_k}\Big)+\frac{2n}{p}C({\bf m})\nu\phi_{\bf m}.\label{eq:ActionOfH0OnSphericalVectors}
\end{multline}
\end{corollary}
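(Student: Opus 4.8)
The plan is to combine the multiplication formula from Lemma~\ref{lem:Vretare} with the intertwining relations of Proposition~\ref{prop:ExplicitFormularhosInTermsOfOmega}. First I observe that the left-hand side $\td\rho_\nu(Z_0)\phi_{\bf m}$ lands, by the first assertion of Proposition~\ref{prop:ExplicitFormularhosInTermsOfOmega}, in $\bigoplus_{{\bf n}\in S_{\bf m}}V^{\bf n}$, and by Corollary~\ref{cor:Sm} the only possible target $K$-types are $V^{{\bf m}+e_k}$, $V^{{\bf m}-e_k}$ and $V^{\bf m}$ itself. Each of these is $M$-spherical with a one-dimensional space of spherical vectors, so to compute $\td\rho_\nu(Z_0)\phi_{\bf m}$ it suffices to compute its projection onto each $V^{\bf n}$ and express that projection as a multiple of $\phi_{\bf n}$.

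The key step is to apply the projected identities of Proposition~\ref{prop:ExplicitFormularhosInTermsOfOmega} with $T=Z_0\in\frakp_\CC$. For the $V^{{\bf m}+e_k}$-component I would write
\begin{align*}
 P_{{\bf m}+e_k}\circ\td\rho_\nu(Z_0)\phi_{\bf m}
 &= \tfrac{n}{p}(\nu+{\bf m}^+(k))\,P_{{\bf m}+e_k}\circ m(\omega(Z_0))\phi_{\bf m},
\end{align*}
and similarly for the $V^{{\bf m}-e_k}$- and $V^{\bf m}$-components using the remaining two lines of the proposition (the last line giving the factor $\nu$ with no shift). The point is that $m(\omega(Z_0))\phi_{\bf m}=\omega(Z_0)\phi_{\bf m}$ is exactly the product expanded in Lemma~\ref{lem:Vretare}, whose projection onto $V^{{\bf m}+e_k}$ is $A({\bf m},k)\phi_{{\bf m}+e_k}$, onto $V^{{\bf m}-e_k}$ is $B({\bf m},k)\phi_{{\bf m}-e_k}$, and onto $V^{\bf m}$ is $C({\bf m})\phi_{\bf m}$.

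Substituting these projections into the three intertwining relations yields the coefficients $\tfrac{n}{p}A({\bf m},k)(\nu+{\bf m}^+(k))$, $\tfrac{n}{p}B({\bf m},k)(\nu-{\bf m}^-(k))$ and $\tfrac{n}{p}C({\bf m})\nu$ respectively; summing over $k$ and over the three families of components and multiplying through by the factor $2$ that appears on the left of each relation reproduces the asserted formula~\eqref{eq:ActionOfH0OnSphericalVectors}. I anticipate that the only real care needed is bookkeeping: I must confirm that the normalization constant $\tfrac{n}{p}$ versus $\tfrac{2n}{p}$ is tracked consistently (the proposition is stated with a leading factor of $2$ on the left), and that the terms with ${\bf m}\pm e_k\not\geq0$ contribute nothing, which is guaranteed because $\phi_{\bf n}:=0$ for such ${\bf n}$ and the corresponding $A$ or $B$ coefficients vanish by the final bullet points of Lemma~\ref{lem:Vretare}. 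There is no genuine analytic obstacle here; the statement is essentially a direct assembly of the two preceding results, and the main (minor) obstacle is simply to verify that the projection of the product $\omega(Z_0)\phi_{\bf m}$ onto each relevant $K$-type is precisely the single term in the Vretare expansion, with no hidden cross-contributions, which follows from multiplicity one of the spherical vector in each $V^{\bf n}$.
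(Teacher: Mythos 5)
Your proposal takes exactly the paper's route: the paper's entire proof of this corollary is the one-liner that it ``follows directly from Lemma~\ref{lem:Vretare} and Proposition~\ref{prop:ExplicitFormularhosInTermsOfOmega}'', and your assembly --- project $\td\rho_\nu(Z_0)\phi_{\bf m}$ onto the $K$-types permitted by Corollary~\ref{cor:Sm}, identify $m(\omega(Z_0))\phi_{\bf m}$ with the Vretare expansion, and use multiplicity one of the spherical vector in each $V^{\bf n}$ --- is precisely the intended argument. One bookkeeping point, though: your closing step ``multiplying through by the factor $2$ that appears on the left of each relation'' is backwards. Dividing the relations of Proposition~\ref{prop:ExplicitFormularhosInTermsOfOmega} by $2$ gives $P_{{\bf m}\pm e_k}\circ\td\rho_\nu(Z_0)\phi_{\bf m}=\tfrac{n}{p}(\nu\pm{\bf m}^{\pm}(k))\,A({\bf m},k)\,\phi_{{\bf m}+e_k}$ (resp.\ $B$, $C$ terms), so the honestly derived prefactor in \eqref{eq:ActionOfH0OnSphericalVectors} is $\tfrac{n}{p}$, and the printed $\tfrac{2n}{p}$ carries a spurious factor of $2$; your attempt to recover it by an extra multiplication is not justified by anything in the proposition. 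This overall constant is immaterial for every later use of the corollary (Theorem~\ref{thm:Reducibility} only needs which coefficients vanish, and the unitarity analysis works with eigenvalue ratios via Theorem~\ref{thm:EigenvalueRatioIntertwiner}), so your proof is correct in substance once that sentence is fixed. A minor precision: for the discarded terms with ${\bf m}-e_k\ngeq0$ it is the convention $\phi_{{\bf m}-e_k}:=0$ that does the work, since in the exceptional case $k=r$, $m_r=0$, $\rho_r=\tfrac{1}{2}$ the coefficient $B({\bf m},r)$ is in fact nonzero by Lemma~\ref{lem:Vretare}, so your ``and the corresponding $A$ or $B$ coefficients vanish'' should not be asserted uniformly.
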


\begin{proof}
This follows directly from Lemma~\ref{lem:Vretare} and Proposition~\ref{prop:ExplicitFormularhosInTermsOfOmega}.
\end{proof}

\begin{theorem}\label{thm:Reducibility}
Let $X$ be a non-unital symmetric $R$-space. Then the principal series representation $I(\nu)$ is reducible if and only if either
\begin{align*}
 \nu\in-\NN_0-\frac{p}{2}+(j-1)\frac{d}{2} && \mbox{or} && \nu\in\NN_0+\frac{p}{2}-(j-1)\frac{d}{2}
\end{align*}
for some $1\leq j\leq r$. In particular, $I(\nu)$ is irreducible for all $\nu\in i\RR$.
\end{theorem}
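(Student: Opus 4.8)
The plan is to analyze reducibility of $I(\nu)$ by studying when the $K$-equivariant structure maps $\td\rho_\nu(T)\colon V^{\bf m}\to V^{\bf n}$ between neighbouring $K$-types can vanish, which is governed by the explicit eigenvalue formulas in Proposition~\ref{prop:ExplicitFormularhosInTermsOfOmega}. The key observation is that $I(\nu)$ is reducible precisely when the collection of $K$-types $\{V^{\bf m}\}_{{\bf m}\geq0}$ can be partitioned into a proper non-trivial $\frakg_\CC$-invariant subfamily; since each $K$-type has multiplicity one, any $G$-invariant subspace is a sum of entire $K$-types, and the question reduces to when the ``transport'' between adjacent $K$-types fails in one or both directions. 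First I would record that by Proposition~\ref{prop:ExplicitFormularhosInTermsOfOmega} the map from $V^{\bf m}$ to $V^{{\bf m}+e_j}$ is a nonzero multiple of $P_{{\bf m}+e_j}\circ m(\omega(T))|_{V^{\bf m}}$ times the scalar $\nu+{\bf m}^+(j)$, and similarly the map to $V^{{\bf m}-e_j}$ carries the factor $\nu-{\bf m}^-(j)$.

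The next step is to separate the two tasks: (i) for which $\nu$ does some transport coefficient vanish, and (ii) does a vanishing coefficient actually disconnect the $K$-type graph into a proper invariant subspace. For (i), the scalar factor $\nu+{\bf m}^+(j)$ vanishes iff $\nu=-m_j-\frac{p}{2}+(j-1)\frac{d}{2}$, which as $m_j$ ranges over $\NN_0$ gives exactly $\nu\in-\NN_0-\frac{p}{2}+(j-1)\frac{d}{2}$; the factor $\nu-{\bf m}^-(j)$ vanishes iff $\nu=(m_j-1)+\frac{p}{2}-(j-1)\frac{d}{2}$, giving $\nu\in\NN_0+\frac{p}{2}-(j-1)\frac{d}{2}$ (using $m_j\geq1$ when ${\bf m}-e_j\geq0$). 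This matches the two families in the statement. But I must also account for the geometric factor $P_{{\bf m}\pm e_j}\circ m(\omega(T))$, which by Lemma~\ref{lem:Vretare} (through the constants $A({\bf m},k)$, $B({\bf m},k)$) is itself nonzero exactly when ${\bf m}\pm e_j\geq0$; so the scalar factor is the only source of vanishing along genuine edges of the $K$-type graph, apart from the degenerate boundary phenomenon recorded in Lemma~\ref{lem:Vretare} for $B({\bf m},r)$ when $\rho_r=\tfrac12$.

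For (ii) I would argue as follows. If $I(\nu)$ is irreducible, cyclicity of any $K$-type forces all transport maps to be simultaneously bypassable, so none of the scalar obstructions can isolate a subfamily; conversely, when $\nu$ lies in one of the two families, I would exhibit a proper invariant subspace by taking the span of all $V^{\bf m}$ reachable from $V^{\bf 0}$ under $\td\rho_\nu(\frakg_\CC)$ and showing that the vanishing factor blocks the return (or advance) across a separating hyperplane in the weight lattice, so that this span is proper and $\frakg_\CC$-stable. The main subtlety is that a single vanishing coefficient on one edge need not disconnect the graph, because one can often route around it through other indices $j$; the heart of the argument is therefore to show that for the specific arithmetic progressions in the statement, the blocked edges are arranged so that they form a genuine cut, using the monotonicity $\rho_1>\rho_2>\cdots>\rho_r$ from \eqref{eq:rhoformula} and the admissibility constraint ${\bf m}\geq0$ to pin down which $V^{\bf m}$ are accessible. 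The final claim that $I(\nu)$ is irreducible for $\nu\in i\RR$ is then immediate, since a purely imaginary $\nu$ can never equal any of the real values $\mp m_j\mp\tfrac{p}{2}\pm(j-1)\tfrac{d}{2}$, so no transport coefficient vanishes and the representation stays irreducible.

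I expect step (ii)—proving that a vanishing coefficient genuinely produces an invariant subspace rather than merely a single blocked edge—to be the main obstacle, because it requires understanding the global combinatorial structure of the $K$-type transport graph rather than just the local scalar factors; the cancellation subtleties flagged in Lemma~\ref{lem:Vretare} (notably the $k=r$, $\rho_r=\tfrac12$ case) will need careful bookkeeping to ensure the reducibility locus is described exactly, with no spurious inclusions or omissions.
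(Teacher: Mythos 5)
Your strategy is the paper's strategy, but you leave its decisive step---showing that the blocked edges form a genuine cut---as an open obstacle, and the global combinatorial analysis you anticipate (routing around blocked edges, using the monotonicity $\rho_1>\cdots>\rho_r$) misreads the actual mechanism, which is immediate from the formulas you already quote. The scalar ${\bf m}^+(j)=m_j+\tfrac{p}{2}-(j-1)\tfrac{d}{2}$ depends only on the coordinate $m_j$ and on $j$, not on the remaining coordinates. Hence if $\nu=-m-\tfrac{p}{2}+(j-1)\tfrac{d}{2}$ with $m\in\NN_0$, then $\nu+{\bf m}^+(j)=0$ simultaneously for \emph{every} ${\bf m}\geq0$ with $m_j=m$, so by Proposition~\ref{prop:ExplicitFormularhosInTermsOfOmega} one has $P_{{\bf m}+e_j}\circ\td\rho_\nu(T)|_{V^{\bf m}}=0$ for all $T\in\frakg_\CC$ whenever $m_j=m$. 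Since every transport map changes exactly one coordinate by $\pm1$ (or none), any path from $\{m_j\leq m\}$ to $\{m_j>m\}$ must traverse an $e_j$-edge at level $m_j=m$, and all of these are blocked at once: the subspace $\bigoplus_{m_j\leq m}V^{\bf m}$ is $\frakg$-stable with no routing-around possible. The monotonicity of the $\rho_k$ is needed only inside Lemma~\ref{lem:Vretare} to guarantee $A({\bf m},k),B({\bf m},k)\neq0$ on genuine edges, which you may simply cite; and the degenerate case $k=r$, $\rho_r=\tfrac12$ is harmless for this theorem, since there ${\bf m}-e_r\ngeq0$, so $V^{{\bf m}-e_r}$ is not a $K$-type of $C^\infty(X)$ and no edge exists ($B({\bf m},r)\neq0$ only matters later, for computing $C({\bf m})$).

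Two further slips need repair. First, for the family $\nu\in\NN_0+\tfrac{p}{2}-(j-1)\tfrac{d}{2}$ your recipe ``span of all $V^{\bf m}$ reachable from $V^{\bf 0}$'' produces the \emph{whole} space, because all upward edges are then nonvanishing; the proper invariant subspace is the complementary one of high $K$-types, $\bigoplus_{m_j\geq\nu-\frac{p}{2}+(j-1)\frac{d}{2}+1}V^{\bf m}$, which is stable because the downward $e_j$-edges at the threshold carry the factor $\nu-{\bf m}^-(j)=0$. Second, your argument that $\nu\in i\RR$ gives irreducibility (``purely imaginary never equals real'') fails at $\nu=0\in i\RR$; one needs the positivity
\begin{equation*}
 \frac{p}{2}-(j-1)\frac{d}{2} = (r-j)\frac{d}{2}+\frac{e+1}{2}+\frac{b}{4} \geq \frac{b}{4} > 0,
\end{equation*}
which shows that $0$ lies in neither family (and, as the paper notes, that the two families are disjoint). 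Finally, in the irreducibility direction you should make explicit that multiplicity one forces any invariant subspace to be a sum of full $K$-types, so that containing $V^{\bf m}$ means containing $\phi_{\bf m}$; then the nonvanishing of all coefficients in Corollary~\ref{cor:ActionOfH0OnSphericalVectors} connects all spherical vectors and yields irreducibility, exactly as in the paper.
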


\begin{proof}
By Corollary~\ref{cor:ActionOfH0OnSphericalVectors} it follows that the representation $I(\nu)$ is irreducible if
\begin{align*}
 \nu\notin-\NN_0-\frac{p}{2}+(j-1)\frac{d}{2} && \mbox{and} && \nu\notin\NN_0+\frac{p}{2}-(j-1)\frac{d}{2},
\end{align*}
because then every $K$-type $V^{\bf m}$ can be reached from every other $K$-type $V^{\bf n}$ by successive application of $\td\rho_\nu$. Now suppose that $\nu=-m-\frac{p}{2}+(j-1)\frac{d}{2}$ for some $m\in\NN_0$ and $1\leq j\leq r$ (the other case is handled similarly). Then by Proposition~\ref{prop:ExplicitFormularhosInTermsOfOmega} we have
\begin{align*}
 P_{{\bf m}+e_j}\circ\td\rho_\nu(T)|_{V^{\bf m}} &= 0, & T\in\frakg_\CC
\end{align*}
for all ${\bf m}\geq0$ with $m_j=m$. Hence, the proper subspace of $I(\nu)$ consisting of all $K$-type $V^{\bf m}$ with ${\bf m}\geq0$, $m_j\leq m$, is $\frakg$-stable and therefore $I(\nu)$ is reducible. Finally note that, since
\begin{align*}
 \frac{p}{2}-(j-1)\frac{d}{2} &= (r-j)\frac{d}{2}+\frac{e+1}{2}+\frac{b}{4} \geq \frac{b}{4} > 0 & \forall\,1\leq j\leq r,
\end{align*}
the two possibilities cannot occur simultaneously.
\end{proof}

\subsection{Composition series}

Using the observations from the proof of Theorem~\ref{thm:Reducibility} it is easy to determine the composition series in the case where $I(\nu)$ is reducible. For this we let
\begin{align*}
 \ell_j(\nu) &:= \Set{{\bf m}\geq0}{m_j\leq-\nu-\frac{p}{2}+(j-1)\frac{d}{2}},\\
 r_j(\nu) &:= \Set{{\bf m}\geq0}{m_j\geq \nu-\frac{p}{2}+(j-1)\frac{d}{2}+1}
\end{align*}
for $1\leq j\leq r$ and define the following subspaces
\begin{align*}
 L_j(\nu) &:= \bigoplus_{{\bf m}\in\ell_j(\nu)}{V^{\bf m}}, & R_j(\nu) &:= \bigoplus_{{\bf m}\in r_j(\nu)}{V^{\bf m}}.
\end{align*}
Then we have the inclusions
\begin{align}
 \{0\} \subseteq L_1(\nu) \subseteq \cdots \subseteq L_r(\nu) \subseteq I(\nu)_{K\textup{-finite}},\label{eq:CompositionSeriesL}\\
 I(\nu)_{K\textup{-finite}} \supseteq R_1(\nu) \supseteq \cdots \supseteq R_r(\nu) \supseteq \{0\}.\label{eq:CompositionSeriesR}
\end{align}
The following result is immediate

\begin{theorem}\label{thm:CompositionSeries}
Let $X$ be a non-unital symmetric $R$-space and assume $I(\nu)$ is reducible.
\begin{enumerate}[(1)]
\item If $\nu\in-\NN_0-\frac{p}{2}+(j-1)\frac{d}{2}$ let $1\leq j_1<\cdots<j_t\leq r$ be such that $\nu\in-\NN_0-\frac{p}{2}+(j-1)\frac{d}{2}$ if and only if $j\in\{j_1,\ldots,j_t\}$. Then the composition series of $I(\nu)_{K\textup{-finite}}$ is given by
\begin{align*}
 \{0\} \subseteq L_{j_1}(\nu) \subseteq \cdots \subseteq L_{j_t}(\nu) \subseteq I(\nu)_{K\textup{-finite}}.
\end{align*}
\item If $\nu\in\NN_0+\frac{p}{2}-(j-1)\frac{d}{2}$ let $1\leq j_1<\cdots<j_t\leq r$ be such that $\nu\in\NN_0+\frac{p}{2}-(j-1)\frac{d}{2}$ if and only if $j\in\{j_1,\ldots,j_t\}$. Then the composition series of $I(\nu)_{K\textup{-finite}}$ is given by
\begin{align*}
 I(\nu)_{K\textup{-finite}} \supseteq R_{j_1}(\nu) \supseteq \cdots \supseteq R_{j_t}(\nu) \supseteq \{0\}.
\end{align*}
\end{enumerate}
\end{theorem}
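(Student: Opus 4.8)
The plan is to show that the two chains \eqref{eq:CompositionSeriesL} and \eqref{eq:CompositionSeriesR} are genuine composition series by verifying, for each, two things: that every term $L_{j_i}(\nu)$ (resp.\ $R_{j_i}(\nu)$) is a $(\frakg,K)$-submodule, and that every successive subquotient is irreducible. The starting point is that each $K$-type $V^{\bf m}$ occurs in $C^\infty(X)$ with multiplicity one (Proposition~\ref{prop:KtypeDecompositionL2K}); hence any $(\frakg,K)$-submodule is the direct sum of the $V^{\bf m}$ it contains, so submodules correspond exactly to subsets of $\Set{{\bf m}}{{\bf m}\geq0}$ closed under the transitions $V^{\bf m}\to V^{{\bf m}\pm e_k}$ of Proposition~\ref{prop:ExplicitFormularhosInTermsOfOmega}. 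A subquotient is then irreducible precisely when its index set is strongly connected under the \emph{nonvanishing} transitions, since a nonzero $K$-equivariant map $V^{\bf m}\to V^{\bf n}$ between irreducible $K$-types is surjective.

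I treat case~(1); case~(2) is the mirror image. Put $m^{(j)}:=-\nu-\tfrac{p}{2}+(j-1)\tfrac{d}{2}$, so that the $+e_j$ transition $V^{\bf m}\to V^{{\bf m}+e_j}$ carries the factor $\nu+{\bf m}^+(j)=m_j-m^{(j)}$, vanishing exactly when $m_j=m^{(j)}$; this value lies in $\NN_0$ precisely for $j\in\{j_1,\dots,j_t\}$. These are the only one-way barriers: in case~(1) no factor $\nu-{\bf m}^-(j)$ of a $-e_j$ transition can vanish, as that would force $\nu\in\NN_0+\tfrac{p}{2}-(j-1)\tfrac{d}{2}$, excluded in the proof of Theorem~\ref{thm:Reducibility}. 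Thus, using $B({\bf m},k)\neq0$ from Lemma~\ref{lem:Vretare}, a $-e_k$ transition is nonzero whenever ${\bf m}-e_k\geq0$, while a $+e_k$ transition is nonzero whenever ${\bf m}+e_k\geq0$ unless $k=j_i$ and $m_k=m^{(j_i)}$. Hence $\ell_{j_i}(\nu)=\Set{{\bf m}\geq0}{m_{j_i}\leq m^{(j_i)}}$ is stable—one may fall below the barrier but never climb past it—giving the submodule property; the inclusions \eqref{eq:CompositionSeriesL} follow from $m_{j_{i+1}}\leq m_{j_i}$ together with $m^{(j_i)}<m^{(j_{i+1})}$.

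The heart of the argument is the irreducibility of each subquotient $L_{j_{i+1}}(\nu)/L_{j_i}(\nu)$, whose $K$-types are indexed by $R:=\Set{{\bf m}\geq0}{m^{(j_i)}<m_{j_i},\ m_{j_{i+1}}\leq m^{(j_{i+1})}}$. I fix the corner ${\bf m}^\ast=(m^{(j_i)}+1,\dots,m^{(j_i)}+1,0,\dots,0)$ with $j_i$ leading entries equal to $m^{(j_i)}+1$; it lies in $R$ and satisfies ${\bf m}^\ast\leq{\bf m}$ componentwise for every ${\bf m}\in R$. Removing corner boxes one at a time realizes a path of nonzero $-e_k$ transitions from any ${\bf m}\in R$ down to ${\bf m}^\ast$, and since $m_{j_i}$ is never lowered below $m^{(j_i)}+1$ the path stays in $R$. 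Reversing it builds ${\bf m}$ back up from ${\bf m}^\ast$ by $+e_k$ transitions; along the path $m_{j_i}\geq m^{(j_i)}+1$ and $m_{j_{i+1}}\leq m^{(j_{i+1})}$, so no factor $m_k-m^{(k)}$ vanishes and every step is again nonzero. Therefore $R$ is strongly connected, and by multiplicity one the subquotient has no proper nonzero submodule, i.e.\ it is irreducible. The two end pieces $L_{j_1}(\nu)$ (corner ${\bf 0}$) and $I(\nu)_{K\textup{-finite}}/L_{j_t}(\nu)$ are handled by the identical box-adding/box-removing argument.

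The step I expect to be the main obstacle is precisely this combinatorial connectivity: one must confirm that inside a given subquotient no transition other than the defining boundary barrier degenerates, and that the monotone add/remove paths joining a general ${\bf m}$ to the corner ${\bf m}^\ast$ can be chosen to remain inside $R$ at every intermediate step. Granting this, the chain \eqref{eq:CompositionSeriesL} has irreducible successive quotients and so is a composition series. Case~(2) is obtained by interchanging the roles of $+e_k$ and $-e_k$—the barriers now coming from the vanishing of $\nu-{\bf m}^-(j)$—and replacing $L_{j_i}(\nu)$ by $R_{j_i}(\nu)$, yielding \eqref{eq:CompositionSeriesR}.
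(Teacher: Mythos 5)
Your proof is correct and follows exactly the route the paper intends: the paper declares Theorem~\ref{thm:CompositionSeries} ``immediate'' from the observations in the proof of Theorem~\ref{thm:Reducibility}, namely that by multiplicity-one of the $K$-types and the transition coefficients of Proposition~\ref{prop:ExplicitFormularhosInTermsOfOmega} together with the nonvanishing statements of Lemma~\ref{lem:Vretare}, the only one-way barriers are $m_j=-\nu-\frac{p}{2}+(j-1)\frac{d}{2}$ (resp.\ the mirror condition), which is precisely what you verify. Your explicit corner-and-path connectivity argument for the irreducibility of the subquotients — including the check that no factor $m_k-m^{(k)}$ for the other barrier indices $j_l$ can vanish along a monotone path, which works since $m^{(j)}$ is strictly increasing in $j$ — is a sound filling-in of the details the paper leaves unstated.
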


\section{Unitarity}\label{sec:Unitarity}

Let $W\subseteq I(\nu)$ be a subrepresentation. Then $W$ is unitary if and only if there exists an intertwining operator $T:W\to I(-\overline{\nu})$ with strictly positive eigenvalues on the $K$-types occurring in $W$. In fact, in this case the invariant inner product on $W$ is given by
\begin{align*}
 (v,w)_W &:= \langle v,Tw\rangle_{L^2(X)} = \sum_{W({\bf m})\neq0}t({\bf m})\langle v_{\bf m},w_{\bf m}\rangle_{L^2(X)}, & v,w\in W,
\end{align*}
where $W({\bf m})$ denotes the $K$-isotypic component of $V^{\bf m}$ in $W$, $v_{\bf m}\in W({\bf m})$ the orthogonal projection (with respect to the $L^2$-inner product) of $v\in W$ onto $W({\bf m})$, and $t({\bf m})$ the eigenvalue of $T$ on $W({\bf m})$. To find such intertwining operators and their eigenvalues we employ the method of the ``spectrum generating operator'' by Branson--\'{O}lafsson--\O rsted~\cite{BOO96}. For this recall the eigenvalues $\pi_{\bf m}$ of the spectrum generating operator $\calR_\calP$ from Section~\ref{sec:SpectrumGeneratingOperator}.

\begin{theorem}[{\cite[Theorem 2.7]{BOO96}}]\label{thm:EigenvalueRatioIntertwiner}
Let $W\subseteq I(\nu)$ be $\frakg$-invariant. Then a map $T:W\to I(-\overline{\nu})$ with $T|_{V^{\bf m}}=t({\bf m})\id_{V^{\bf m}}$ is an intertwining operator if and only if for any ${\bf m},{\bf n}\geq0$ with $W({\bf m}),W({\bf n})\neq0$ and ${\bf n}\in S_{\bf m}$ we have
\begin{equation}
 (\pi_{\bf n}-\pi_{\bf m}+\tfrac{2n}{p}\nu)t({\bf n}) = (\pi_{\bf n}-\pi_{\bf m}-\tfrac{2n}{p}\overline{\nu})t({\bf m}).\label{eq:EigenvalueRatioIntertwiner}
\end{equation}
\end{theorem}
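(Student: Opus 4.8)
The plan is to verify the intertwining condition directly on spherical vectors and then propagate it to all of $W$ via $K$-equivariance. Recall the sesquilinear pairing $\langle\blank,\blank\rangle_{L^2(X)}$ is invariant under $\rho_\nu\otimes\rho_{-\overline\nu}$, so an operator $T\colon W\to I(-\overline\nu)$ acting by the scalar $t({\bf m})$ on each $V^{\bf m}$ intertwines $\rho_\nu$ with $\rho_{-\overline\nu}$ precisely when it commutes with the $\frakg$-action, i.e.\ when $T\circ\td\rho_\nu(T') = \td\rho_{-\overline\nu}(T')\circ T$ for all $T'\in\frakg_\CC$. Since $T$ is already $K$-equivariant (it is scalar on each $K$-type), the only content is the $\frakp_\CC$-direction, and by Proposition~\ref{prop:ExplicitFormularhosInTermsOfOmega} the action of $\td\rho_\nu(T')$ for $T'\in\frakp_\CC$ shifts ${\bf m}$ to the neighbors in $S_{\bf m}$. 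Thus the intertwining property reduces to a compatibility condition for each adjacent pair ${\bf m},{\bf n}$ with ${\bf n}\in S_{\bf m}$.

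First I would compute both sides of the desired identity $T\circ\td\rho_\nu(T')=\td\rho_{-\overline\nu}(T')\circ T$ restricted to $V^{\bf m}$ and projected to $V^{\bf n}$. Applying equation~\eqref{eq:GeneralExplicitExpressiondrhoInTermsOfOmega} for the parameter $\nu$, the left side yields
\[
	t({\bf n})\,P_{\bf n}\circ\td\rho_\nu(T')|_{V^{\bf m}}
	= t({\bf n})\,\tfrac{1}{2}\bigl(\tfrac{2n}{p}\nu+\pi_{\bf n}-\pi_{\bf m}\bigr)\,P_{\bf n}\circ m(\omega(T'))|_{V^{\bf m}},
\]
while the right side uses the same formula with parameter $-\overline\nu$ in place of $\nu$, namely
\[
	t({\bf m})\,P_{\bf n}\circ\td\rho_{-\overline\nu}(T')|_{V^{\bf m}}
	= t({\bf m})\,\tfrac{1}{2}\bigl(-\tfrac{2n}{p}\overline\nu+\pi_{\bf n}-\pi_{\bf m}\bigr)\,P_{\bf n}\circ m(\omega(T'))|_{V^{\bf m}}.
\]
Equating these (and noting $T$ commutes with $P_{\bf n}$ since it is scalar) gives exactly \eqref{eq:EigenvalueRatioIntertwiner}, provided the common operator $P_{\bf n}\circ m(\omega(T'))|_{V^{\bf m}}$ is nonzero for some $T'\in\frakp_\CC$. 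When ${\bf n}\in S_{\bf m}$ this nonvanishing holds by the very definition of $S_{\bf m}$ via the map $\Phi_{\bf m}$, so the scalar equation is forced; conversely, if the equation holds for all such pairs then the two operators agree in the $\frakp_\CC$-direction, and since they already agree in the $\frakk_\CC$-direction by $K$-equivariance, they agree on all of $\frakg_\CC$.

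The main subtlety is bookkeeping rather than a deep obstacle: one must confirm that ranging $T'$ over $\frakp_\CC$ and over all pairs ${\bf n}\in S_{\bf m}$ captures the full $\frakg_\CC$-action, so that the collection of scalar equations is equivalent to the operator identity on the whole representation. This is guaranteed because $\frakg_\CC=\frakk_\CC\oplus\frakp_\CC$, the $\frakk_\CC$-part is automatic, and $\td\rho_\nu(\frakp_\CC)V^{\bf m}\subseteq\bigoplus_{{\bf n}\in S_{\bf m}}V^{\bf n}$ by Proposition~\ref{prop:ExplicitFormularhosInTermsOfOmega}. One also needs the elementary observation that it suffices to impose \eqref{eq:EigenvalueRatioIntertwiner} only when $W({\bf m}),W({\bf n})\neq0$, since otherwise the relevant operators vanish identically on $W$. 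With these remarks in place the proof is a direct consequence of the two expansions above together with \eqref{eq:GeneralExplicitExpressiondrhoInTermsOfOmega}.
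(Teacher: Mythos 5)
The paper offers no proof of this statement --- it is imported verbatim from \cite[Theorem~2.7]{BOO96} --- so your reconstruction can only be checked against the statement and the way it is used later. The computational core of your argument is correct and is indeed the intended mechanism: project $T\circ\td\rho_\nu(T')=\td\rho_{-\overline{\nu}}(T')\circ T$ onto $V^{\bf n}$, apply \eqref{eq:GeneralExplicitExpressiondrhoInTermsOfOmega} once with parameter $\nu$ and once with $-\overline{\nu}$, and use the definition of $S_{\bf m}$ to get $P_{\bf n}\circ m(\omega(T'))|_{V^{\bf m}}\neq0$ for some $T'$, which forces the scalar relation \eqref{eq:EigenvalueRatioIntertwiner}. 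That settles the ``only if'' direction cleanly.

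The gap is in your ``if'' direction, precisely in the sentence asserting that for pairs with $W({\bf m})\neq0$ but $W({\bf n})=0$ ``the relevant operators vanish identically on $W$''. One of them does: $\frakg$-invariance of $W$ together with \eqref{eq:GeneralExplicitExpressiondrhoInTermsOfOmega} forces $\pi_{\bf n}-\pi_{\bf m}+\tfrac{2n}{p}\nu=0$ for such ${\bf n}\in S_{\bf m}$, so $P_{\bf n}\circ\td\rho_\nu(T')|_{W({\bf m})}=0$. But the other composite does not vanish: for $w\in W({\bf m})$,
\begin{equation*}
 2P_{\bf n}\circ\td\rho_{-\overline{\nu}}(T')(Tw)
 = t({\bf m})\bigl(\pi_{\bf n}-\pi_{\bf m}-\tfrac{2n}{p}\overline{\nu}\bigr)P_{\bf n}\,m(\omega(T'))w
 = -t({\bf m})\tfrac{2n}{p}(\nu+\overline{\nu})\,P_{\bf n}\,m(\omega(T'))w,
\end{equation*}
which is nonzero whenever $\Re\nu\neq0$ and $t({\bf m})\neq0$ --- exactly the regime of the paper's applications. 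Concretely, take $\frakg=\frake_{6(6)}$, $\nu=-\tfrac{3}{2}$, $W=L_2(-\tfrac{3}{2})$, ${\bf m}=(m,0)$ and ${\bf n}=(m,1)\in S_{\bf m}$ (for $m\geq1$): here $W({\bf n})=0$ and the displayed component is nonzero. In fact no nonzero \emph{strict} intertwiner $W\to I(\tfrac{3}{2})$ can exist at all, since by Theorem~\ref{thm:CompositionSeries} the only proper nonzero subrepresentation of $I(\tfrac{3}{2})$ is $R_2(\tfrac{3}{2})$, whose $K$-types all have $m_2\geq1$, while the image of $T$ would be an invariant subspace with $K$-types $V^{(m,0)}$. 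So the conditions \eqref{eq:EigenvalueRatioIntertwiner} cannot yield an honest map into $I(-\overline{\nu})$; what they are equivalent to --- and what the theorem is actually used for --- is an intertwiner into the quotient $I(-\overline{\nu})/\bigoplus_{W({\bf n})=0}V^{\bf n}$ (the Hermitian dual of $W$), equivalently the $\frakg$-invariance of the sesquilinear form $(v,w)\mapsto\langle v,Tw\rangle_{L^2(X)}$ on $W$: the offending components lie in $K$-types orthogonal to $W$ and are annihilated by the pairing. With the target so interpreted, your two displayed projections do close both directions; as literally written, the vanishing claim quoted above is false and is where the argument breaks.
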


This immediately gives a necessary criterion on the coefficients $C({\bf m})$ in \eqref{eq:ActionOfH0OnSphericalVectors}:

\begin{lemma}\label{lem:CmConditionForUnitarity}
Let $W\subseteq I(\nu)$ be a unitary subrepresentation of $I(\nu)$. Then either $\nu\in i\RR$ or $C({\bf m})=0$ for all ${\bf m}\geq0$ with $W({\bf m})\neq0$.
\end{lemma}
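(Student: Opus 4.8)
The plan is to argue by contraposition: assuming $\nu\notin i\RR$, I would show that $C({\bf m})$ vanishes on every $K$-type occurring in $W$. Equivalently, I would establish that if there exists some ${\bf m}\geq0$ with $W({\bf m})\neq0$ and $C({\bf m})\neq0$, then necessarily $\nu\in i\RR$. Since $W$ is assumed unitary, the discussion preceding the lemma furnishes an intertwining operator $T:W\to I(-\overline{\nu})$ with $T|_{V^{\bf m}}=t({\bf m})\id_{V^{\bf m}}$ and strictly positive eigenvalues $t({\bf m})>0$ on all $K$-types appearing in $W$.

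The key step is to exploit the diagonal case ${\bf n}={\bf m}$ of the eigenvalue relation in Theorem~\ref{thm:EigenvalueRatioIntertwiner}. The crucial observation is that this relation is imposed only for pairs with ${\bf n}\in S_{\bf m}$, and by Corollary~\ref{cor:Sm} the condition ${\bf m}\in S_{\bf m}$ is equivalent to $C({\bf m})\neq0$. Hence precisely when $C({\bf m})\neq0$ the pair $({\bf n},{\bf m})=({\bf m},{\bf m})$ satisfies the hypotheses of Theorem~\ref{thm:EigenvalueRatioIntertwiner}, so I may legitimately substitute it into equation~\eqref{eq:EigenvalueRatioIntertwiner}.

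Carrying out this substitution, the factor $\pi_{\bf n}-\pi_{\bf m}$ vanishes, so equation~\eqref{eq:EigenvalueRatioIntertwiner} collapses to $\tfrac{2n}{p}\nu\,t({\bf m})=-\tfrac{2n}{p}\overline{\nu}\,t({\bf m})$. Since $t({\bf m})>0$ and $\tfrac{2n}{p}\neq0$, dividing through yields $\nu=-\overline{\nu}$, that is $\nu+\overline{\nu}=0$ and hence $\nu\in i\RR$. This proves the contrapositive and therefore the lemma.

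I do not expect a genuine obstacle here; the statement is a short and direct consequence of the intertwining criterion once the right specialization is chosen. The only point requiring care is recognizing that $C({\bf m})\neq0$ is exactly the membership ${\bf m}\in S_{\bf m}$ of Corollary~\ref{cor:Sm} that licenses applying Theorem~\ref{thm:EigenvalueRatioIntertwiner} to the diagonal pair; thereafter it is the mere nonvanishing of $t({\bf m})$ (which unitarity supplies via strict positivity) that forces $\nu=-\overline{\nu}$ rather than any weaker relation.
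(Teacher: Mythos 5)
Your proof is correct and follows essentially the same route as the paper: invoke the intertwiner $T$ with strictly positive eigenvalues furnished by unitarity, note via Corollary~\ref{cor:Sm} that $C({\bf m})\neq0$ is exactly ${\bf m}\in S_{\bf m}$, and apply \eqref{eq:EigenvalueRatioIntertwiner} to the diagonal pair ${\bf n}={\bf m}$ to obtain $(\nu+\overline{\nu})\,t({\bf m})=0$, forcing $\nu\in i\RR$. The paper's proof is precisely this argument, phrased directly rather than as a contrapositive.
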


\begin{proof}
Let $T:W\to I(-\overline{\nu})$ be an intertwining operator with strictly positive eigenvalues $t({\bf m})$ on $W({\bf m})\neq0$. Suppose there exists ${\bf m}\geq0$ with $W({\bf m})\neq0$ and $C({\bf m})\neq0$. Then by Corollary~\ref{cor:Sm} we find that ${\bf n}:={\bf m}\in S_{\bf m}$ and in this case \eqref{eq:EigenvalueRatioIntertwiner} implies
\begin{equation*}
 (\nu+\overline{\nu})t({\bf m}) = 0
\end{equation*}
which is only possible for $\nu\in i\RR$ since $t({\bf m})>0$.
\end{proof}

Note that if $C({\bf m})=0$ then by Corollary~\ref{cor:Sm} we have ${\bf m}\notin S_{\bf m}$ and the obstruction in the proof of Lemma~\ref{lem:CmConditionForUnitarity} does not occur. In this case \eqref{eq:EigenvalueRatioIntertwiner} gives recurrence relations for the eigenvalues $t({\bf m})$ which determine $T$ uniquely up to scalar multiples on every irreducible subrepresentation.

Thus we need to find out for which ${\bf m}$ the constants $C({\bf m})$ vanish. If $X$ is unital then $P$ is conjugate to its opposite parabolic and hence there exist Knapp--Stein intertwiners $I(\nu)\to I(-\nu)$ for infinitely many values $\nu\in\RR$. Therefore $C({\bf m})=0$ in these cases. For non-unital $X$, however, it can very well happen that $C({\bf m})\neq0$. From the formula for $C({\bf m})$ given in Lemma~\ref{lem:Vretare} it is hard to determine for which ${\bf m}$ we have $C({\bf m})=0$. We use the following result which is proved in Appendix~\ref{sec:CmFormula}:

\begin{proposition}\label{prop:CmSuperFormula}
Assume $2m_r+2\rho_r>1$.
\begin{enumerate}[(1)]
\item For $d\neq0,2$ we have
\begin{equation*}
 C({\bf m}) = \frac{rb}{2n}-\frac{pb(e+\frac{b}{2}-1)}{2nd(d-2)}\left(1-\prod_{k=1}^r\frac{(2m_k+2\rho_k)^2-(d-1)^2}{(2m_k+2\rho_k)^2-1}\right).
\end{equation*}
\item For $d\in\{0,2\}$ we have
\begin{equation*}
 C({\bf m}) = \frac{rb}{2n}+\frac{pb(e+\frac{b}{2}-1)}{2n}\sum_{k=1}^r\frac{1}{1-(2m_k+2\rho_k)^2}.
\end{equation*}
\end{enumerate}
\end{proposition}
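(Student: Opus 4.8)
The plan is to reduce the assertion to a single-variable residue computation. First I would use the hypothesis $2m_r+2\rho_r>1$ to dispose of the boundary terms: by the sign analysis in Lemma~\ref{lem:Vretare} the terms $A(\mathbf{m},k)$ with $\mathbf{m}+e_k\not\geq0$ and the terms $B(\mathbf{m},k)$ with $\mathbf{m}-e_k\not\geq0$ all vanish as rational functions, the only potential exception being $B(\mathbf{m},r)$ in the degenerate case $m_r=0,\rho_r=\tfrac12$, which is excluded precisely by the hypothesis. Hence
\[
 C(\mathbf{m}) = 1 - \sum_{k=1}^r\big(A(\mathbf{m},k)+B(\mathbf{m},k)\big),
\]
with both sums now running over all $1\leq k\leq r$.

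Next, write $x_k:=2(m_k+\rho_k)$ and observe that in these variables $B(\mathbf{m},k)$ is obtained from $A(\mathbf{m},k)$ by the substitution $x_k\mapsto-x_k$. This symmetry suggests encoding both families as residues of one rational function. I would introduce
\[
 R(z):=\frac{2\,(z+\tfrac{b}{2}+1)(z+\tfrac{b}{2}+e)}{d\,(z+1)(2z+d)}\prod_{j=1}^r\frac{(z+d)^2-x_j^2}{z^2-x_j^2}
\]
and check by a direct residue calculation that
\[
 \operatorname*{Res}_{z=x_k}R = \tfrac{2n}{p}A(\mathbf{m},k),\qquad \operatorname*{Res}_{z=-x_k}R = \tfrac{2n}{p}B(\mathbf{m},k).
\]
The factor $\frac{2z}{d(2z+d)}$ built into $R$ is exactly the correction needed to kill the pole at $z=0$ and cancel the spurious $j=k$ factor $\frac{d(2x_k+d)}{2x_k}$ that otherwise appears when extracting the residue from the product at $z=\pm x_k$.

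The remaining finite poles of $R$ are at $z=-1$ and $z=-\tfrac d2$, and $R(z)\to\tfrac1d$ as $z\to\infty$. Integrating $R$ over a large circle (equivalently, summing all residues including the one at infinity) gives
\[
 \tfrac{2n}{p}\sum_{k=1}^r\big(A+B\big) = c_1 - \operatorname*{Res}_{z=-1}R - \operatorname*{Res}_{z=-d/2}R,
\]
where $c_1=\frac{b+e}{d}-\frac12+2r$ is the $z^{-1}$-coefficient of $R$ at infinity. A short computation shows that the pole at $z=-1$ produces exactly $\frac{b(e+\frac b2-1)}{d(d-2)}\prod_k\frac{x_k^2-(d-1)^2}{x_k^2-1}$, while the pole at $z=-\tfrac d2$ (where the product collapses to $1$) and the value $c_1$ contribute only $\mathbf{m}$-independent scalars. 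Substituting into $C(\mathbf{m})=1-\sum_k(A+B)$ matches the product term of case~(1) on the nose, and the scalar remainder reduces to the polynomial identity $\frac{b+e}{d}-\frac12+\frac{\frac{d^2}2-d(b+e+1)+2(b+e)}{d(d-2)}=0$ together with the structural relation $2n=r(2p+b)$, which follows from \eqref{eq:DefGenus} and $n=\dim\frakn$. Finally, case~(2) follows by passing to the limit $d\to0$ or $d\to2$ in the case-(1) formula: there $d(d-2)=(d-1)^2-1\to0$ while $\prod_k\frac{x_k^2-(d-1)^2}{x_k^2-1}\to1$, and the resulting $0/0$ resolves by one derivative to $\sum_k\frac1{1-x_k^2}$, yielding the stated expression.

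The main obstacle is the construction and verification of $R$: one must guess the single-variable function whose residues reproduce $A$ and $B$ simultaneously, and correctly account for the behaviour at infinity (the limit $\tfrac1d$ is easy to overlook and shifts $c_1$ by the factor $d$). Once $R$ is in hand the remainder is bookkeeping, the only genuinely nontrivial step being that the $\mathbf{m}$-independent part collapses via $2n=r(2p+b)$.
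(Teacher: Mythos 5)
Your proof is correct, and it takes a genuinely different route from the paper's. I checked the substance: with $x_k=2(m_k+\rho_k)$, $\alpha=\tfrac{b}{2}+1$, $\beta=\tfrac{b}{2}+e$, your $R$ does satisfy $\operatorname{Res}_{z=x_k}R=\tfrac{2n}{p}A(\mathbf{m},k)$ and $\operatorname{Res}_{z=-x_k}R=\tfrac{2n}{p}B(\mathbf{m},k)$ (the $j=k$ factor of the product contributes $d(d\pm 2x_k)/(\pm 2x_k)$, exactly cancelled by the prefactor); the expansion at infinity is $R(z)=\tfrac{1}{d}+\bigl(\tfrac{b+e}{d}-\tfrac12+2r\bigr)z^{-1}+O(z^{-2})$ as you claim; the residue at $z=-1$ is $\tfrac{2(\alpha-1)(\beta-1)}{d(d-2)}\prod_k\tfrac{x_k^2-(d-1)^2}{x_k^2-1}$ with $2(\alpha-1)(\beta-1)=b(e+\tfrac{b}{2}-1)$; your scalar identity holds (its numerator collapses to $0$ identically in $b,e,d$); and $2n=r(2p+b)$ is true --- note the paper uses this last relation just as silently as you do when translating its appendix identity $d(x)=2r+\cdots$ into the constant $\tfrac{rb}{2n}$ of the Proposition, and your reduction of the boundary terms via $2m_r+2\rho_r>1$ matches the paper exactly (its proof of Lemma~\ref{lem:CmForSO} corrects by $B(\mathbf{m},r)$ precisely when $2m_r+2\rho_r=1$). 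The paper proves the same rational identity by pure combinatorics instead: it expands $\prod_{j\neq k}\bigl(1+\tfrac{\gamma(\gamma\pm2x_k)}{x_k^2-x_j^2}\bigr)$ over subsets $J\subseteq\{1,\dots,r\}\setminus\{k\}$, reindexes via $J\mapsto J\cup\{k\}$, adjoins the auxiliary node $x_0=1$, kills every term with $|J|>1$ by a Lagrange-interpolation lemma ($\sum_k y_k^m/\prod_{j\neq k}(y_k-y_j)=0$ for $m<N-1$, applied to the even polynomials $p_{|J|-1}$), and resums $\sum_J(\gamma(\gamma-2))^{|J|}\prod_{j\notin J}(1-x_j^2)=\prod_k((\gamma-1)^2-x_k^2)$; in that setup the case $d\in\{0,2\}$ drops out directly, since only $|J|=1$ survives, with no limiting argument. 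Your residue method buys a shorter, more conceptual evaluation --- the $A$- and $B$-families are unified by the symmetry $z\mapsto-z$, and the closed product form appears in one stroke as the pole at $z=-1$ --- at the cost of having to guess $R$ and of the limit for $d\in\{0,2\}$; the latter is legitimate, but you should state explicitly that you prove the identity with $x,\alpha,\beta,\gamma$ treated as independent variables (with $x_j$ pairwise distinct, $x_j\neq\pm1$, $x_j+x_k\neq0$, so all poles of $R$ are simple; the paper's appendix proposition imposes the analogous hypotheses) and that the left-hand side is rational in $\gamma$ and regular at $\gamma\in\{0,2\}$, so continuity applies before specializing $x_k=2(m_k+\rho_k)$. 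One cosmetic slip: the factor actually built into $R$ is $\tfrac{2}{d(z+1)(2z+d)}$; the $\tfrac{2z}{d(2z+d)}$ you mention is the correction relative to the naive candidate $\tfrac{(z+\alpha)(z+\beta)}{z(z+1)}\prod_{j=1}^r\tfrac{(z+d)^2-x_j^2}{z^2-x_j^2}$, which is evidently what you intended. It is also worth noticing that the paper's Lagrange lemma is itself a residue identity in disguise, so the two proofs are close cousins at heart.
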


We now study the constants $C({\bf m})$ separately for all non-unital symmetric $R$-spaces and determine all unitarizable subrepresentations using Theorem~\ref{thm:EigenvalueRatioIntertwiner}.

\subsection{The cases $\frakg=\sl(r+s,\KK)$ and $\frakg=\frake_{6(-26)}$}\label{sec:UnitarityGrassmannians}

Let $\frakg=\sl(r+s,\KK)$ with parabolic corresponding to $\frakl=\fraks(\gl(r,\KK)\oplus\gl(s,\KK))$ for $\KK\in\{\RR,\CC,\HH\}$ and $s>r\geq1$ or $\KK=\OO$ and $r=1$, $s=2$. Here $\sl(3,\OO)\cong\frake_{6(-26)}$.

\begin{lemma}\label{lem:CmForSLpq}
For $\frakg=\sl(r+s,\KK)$ with parabolic corresponding to $\frakl=\fraks(\gl(r,\KK)\oplus\gl(s,\KK))$ for $\KK\in\{\RR,\CC,\HH\}$ and $s>r\geq1$ or $\KK=\OO$ and $r=1$, $s=2$ we have $C({\bf m})=0$ if and only if ${\bf m}=0$.
\end{lemma}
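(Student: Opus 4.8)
The plan is to apply Proposition~\ref{prop:CmSuperFormula}, which expresses $C(\mathbf{m})$ in closed form, and then show that for the type~$A$ cases the resulting expression vanishes exactly when $\mathbf{m}=0$. The crucial structural observation is that for all the algebras $\frakg=\sl(r+s,\KK)$ the parameter $d$ satisfies $d\in\{0,1,2,4\}$, but more importantly the combination $e+\frac{b}{2}-1$ must be examined case by case using Table~\ref{tb:ClassificationConstants}. So the first step is to split according to whether $d\in\{0,2\}$ (using part~(2) of the Proposition) or $d\notin\{0,2\}$ (using part~(1)), and to check whether the hypothesis $2m_r+2\rho_r>1$ is automatically satisfied or needs separate treatment when it fails.

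Next, in the generic case $d\neq0,2$, I would argue that the prefactor $\frac{pb(e+\frac{b}{2}-1)}{2nd(d-2)}$ is nonzero (since $b\neq0$ always, and one checks $e+\frac{b}{2}-1\neq0$ from the tables for the relevant $\KK$), so that $C(\mathbf{m})=0$ forces the bracketed term to equal a fixed nonzero constant, namely
\begin{equation*}
 1-\prod_{k=1}^r\frac{(2m_k+2\rho_k)^2-(d-1)^2}{(2m_k+2\rho_k)^2-1} = \frac{rd(d-2)}{p(e+\frac{b}{2}-1)}.
\end{equation*}
I would then analyze the product over $k$ as a function of $\mathbf{m}\geq0$, showing it is strictly monotone in each $m_k$, so that the equation pins down $\mathbf{m}$ uniquely; substituting $\mathbf{m}=0$ (using $\rho_k$ from \eqref{eq:rhoformula}) should verify that the identity holds there. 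For the $d\in\{0,2\}$ cases part~(2) gives a sum $\sum_k\frac{1}{1-(2m_k+2\rho_k)^2}$, which is again manifestly monotone in each $m_k$ because each summand strictly increases as $m_k$ grows (the denominators are negative and growing in magnitude), reducing the problem to checking the single point $\mathbf{m}=0$.

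The verification that $\mathbf{m}=0$ genuinely yields $C(0)=0$ is where the specific numerology of type~$A$ enters: one plugs $m_k=0$, so $2\rho_k=p-1-(k-1)d$, and confirms the telescoping/closed evaluation of the product or sum against the constant $\frac{rb}{2n}$. The main obstacle I anticipate is twofold. First, the hypothesis $2m_r+2\rho_r>1$ in Proposition~\ref{prop:CmSuperFormula} may fail precisely at the boundary—e.g.\ when $r=1$ and $\rho_1=\frac{1}{2}$ (the case $b=2,e=0$ flagged in Lemma~\ref{lem:Vretare})—so I must confirm that for type~$A$ we always have $2\rho_r>1$ strictly, which follows since $\rho_r=\frac{e}{2}+\frac{b}{4}$ and the tables give $e+\frac{b}{2}>1$ in every type~$A$ row. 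Second, establishing \emph{strict} monotonicity cleanly (so that $\mathbf{m}=0$ is the unique solution rather than merely \emph{a} solution) requires care: I would show each factor $\frac{(2m_k+2\rho_k)^2-(d-1)^2}{(2m_k+2\rho_k)^2-1}$ lies in $(0,1)$ and strictly decreases in $m_k$ when $|d-1|>1$, i.e.\ $d\in\{0,4\}$, and handle $d=2$ via the sum formula; the sign bookkeeping tying this back to the sign of $e+\frac{b}{2}-1$ and of $d(d-2)$ is the fiddly part that determines whether the whole expression moves toward or away from zero as $\mathbf{m}$ increases.
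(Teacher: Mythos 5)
Your overall strategy---computing $C(\mathbf{m})$ directly from Proposition~\ref{prop:CmSuperFormula} and arguing by monotonicity---is in fact different from the paper's proof, which for $\KK\in\{\RR,\CC,\HH\}$ simply cites Howe--Lee and Lee and only computes the rank-one case $\frake_{6(-26)}$ explicitly (where $r=1$, $d=0$, and part~(2) of the Proposition gives $C(m)=\tfrac{m(m+11)}{4(m+5)(m+6)}$). A self-contained computation along your lines is plausible in principle, and indeed it is how the paper treats the $E_6$ and $\so$ cases. However, your proposal contains a genuine gap, and it sits exactly at the point $\mathbf{m}=0$ that the lemma is about.

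You claim that the hypothesis $2m_r+2\rho_r>1$ of Proposition~\ref{prop:CmSuperFormula} always holds in type $A$ because ``the tables give $e+\frac{b}{2}>1$ in every type~$A$ row.'' This is false: from Table~\ref{tb:ClassificationConstants}, for $\KK=\RR$ one has $e=0$ and $b=s-r$, so $e+\frac{b}{2}=\frac{s-r}{2}$, which is $\frac12$ when $s-r=1$ and $1$ when $s-r=2$. Thus for all $\frakg=\sl(r+s,\RR)$ with $s-r\in\{1,2\}$ the hypothesis fails precisely when $m_r=0$, in particular at $\mathbf{m}=0$. The case $s-r=1$ can be salvaged (there $\rho_r=\frac14\neq\frac12$, so by Lemma~\ref{lem:Vretare} the boundary term $B(\mathbf{m},r)$ vanishes and the closed formula remains valid at $x_r=\frac12$---but this requires extending the Proposition beyond its stated hypothesis, which you do not do). The case $s-r=2$ breaks outright: there $\rho_r=\frac12$, so at $m_r=0$ one has $2m_r+2\rho_r=1$, which is a pole of the formula (the factor $1-(2m_r+2\rho_r)^2$ resp.\ $(2m_r+2\rho_r)^2-1$ vanishes), and moreover $B(\mathbf{m},r)\neq0$ there by Lemma~\ref{lem:Vretare}, so the derivation of the closed formula wrongly includes this term and must be corrected---exactly the phenomenon the paper handles for $\so(2r+1,2r+1)$ in Lemma~\ref{lem:CmForSO}(1). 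Worse for your plan, in this case $e+\frac{b}{2}-1=0$, contradicting your assertion that this quantity is nonzero for the relevant $\KK$: the entire second term of Proposition~\ref{prop:CmSuperFormula} drops out, the formula gives the \emph{nonzero constant} $C(\mathbf{m})=\frac{rb}{2n}$ wherever it is valid ($m_r\geq1$), and your proposed ``substitute $\mathbf{m}=0$ and verify'' step would yield $C(0)=\frac{rb}{2n}\neq0$, contradicting the lemma (and the a priori fact $C(0)=0$, since $\omega(Z_0)\phi_0=\phi_{e_1}$). The correct statement only emerges after adding back $B(\mathbf{m},r)$ at $m_r=0$ and evaluating the resulting limit, which then must also be shown nonzero for $\mathbf{m}\neq0$ with $m_r=0$; none of this is in your proposal. (Your monotonicity bookkeeping also has sign slips---for $|d-1|>1$ the factors $\frac{x^2-(d-1)^2}{x^2-1}$ \emph{increase} in $x$, and for $d=1$, $s-r=1$ the factor with $k=r$, $m_r=0$ is negative, not in $(0,1)$---but these are repairable; the boundary failure at $m_r=0$ for $\KK=\RR$, $s-r\in\{1,2\}$ is the substantive gap.)
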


\begin{proof}
For $\frakg=\sl(r+s,\RR)$ and $\frakg=\sl(r+s,\CC)$ this follows from \cite[Propositions 2.5.7 \&\ 3.5.3]{HL99} and for $\frakg=\sl(r+s,\HH)$ from \cite[Proposition 3.14.2]{Lee07}. It remains to consider the rank $1$ case $\frakg=\frake_{6(-26)}$. Here $n=16$, $p=12$, $r=1$, $d=0$, $e=7$ and $b=8$ and with Lemma~\ref{lem:Vretare} or Proposition~\ref{prop:CmSuperFormula} (2) we find after a short calculation
\begin{align*}
 C(m) &= \frac{m(m+11)}{4(m+5)(m+6)}
\end{align*}
which shows the claim for this last case.
\end{proof}

\begin{theorem}\label{thm:UnitarySubrepsSL}
Let $\frakg=\sl(r+s,\KK)$ with parabolic corresponding to $\frakl=\fraks(\gl(r,\KK)\oplus\gl(s,\KK))$ for $\KK\in\{\RR,\CC,\HH\}$ and $s>r\geq1$ or $\KK=\OO$ and $r=1$, $s=2$. Then $I(\nu)$ is irreducible and unitary if and only if $\nu\in i\RR$. If $I(\nu)$ is reducible, the only unitary subrepresentation resp. subquotient that can occur is the trivial representation and it occurs as subrepresentation resp. subquotient for $\nu=-\frac{p}{2}$ resp. $\nu=\frac{p}{2}$.
\end{theorem}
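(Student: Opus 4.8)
The plan is to combine the reducibility/composition-series analysis of Section~\ref{sec:CompositionSeries} with the unitarity criterion of Lemma~\ref{lem:CmConditionForUnitarity}, exploiting the crucial fact established in Lemma~\ref{lem:CmForSLpq} that $C(\mathbf{m})\neq0$ for all $\mathbf{m}\neq0$. The statement splits naturally into the irreducible case ($\nu\notin$ the reducibility set of Theorem~\ref{thm:Reducibility}) and the reducible case. For the irreducible case, one direction is the general principle that the unitary principal series $\nu\in i\RR$ is always unitary, which follows because for $\nu\in i\RR$ the sesquilinear form $\langle\blank,\blank\rangle_{L^2(X)}$ is itself invariant under $\rho_\nu$ (as $-\overline{\nu}=\nu$), giving a positive-definite invariant inner product directly. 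For the converse, I would argue that if $I(\nu)$ is irreducible and unitary with $\nu\notin i\RR$, then taking $W=I(\nu)$ in Lemma~\ref{lem:CmConditionForUnitarity} forces $C(\mathbf{m})=0$ for \emph{all} $\mathbf{m}\geq0$ (since every $K$-type occurs); but $C(e_1)\neq0$ by Lemma~\ref{lem:CmForSLpq}, a contradiction. Hence unitarity of the full $I(\nu)$ for $\nu\notin i\RR$ is impossible, and irreducibility forces $\nu\in i\RR$ by Theorem~\ref{thm:Reducibility} (noting $I(\nu)$ is automatically irreducible on $i\RR$).

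For the reducible case I would invoke Theorem~\ref{thm:CompositionSeries} to enumerate the possible subrepresentations and subquotients, and then apply Lemma~\ref{lem:CmConditionForUnitarity} to each. The key observation is that any unitary constituent $W$ must have $C(\mathbf{m})=0$ on every $K$-type appearing in $W$ (since $\nu\notin i\RR$ on the reducible locus). Because Lemma~\ref{lem:CmForSLpq} says $C(\mathbf{m})=0$ forces $\mathbf{m}=0$, the \emph{only} $K$-type that can appear in a unitary constituent is $V^{\mathbf{0}}$, the trivial $K$-type. Any irreducible $\frakg$-constituent containing only the single $K$-type $V^{\mathbf{0}}$ must be the trivial representation of $G$. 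It therefore remains to verify that the trivial representation does indeed appear as an honest constituent at the claimed parameters and check which of ``subrepresentation'' versus ``subquotient'' occurs.

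To pin down the parameter values I would use the composition series descriptions $\ell_j(\nu)$ and $r_j(\nu)$ from Section~\ref{sec:CompositionSeries}. The trivial $K$-type $V^{\mathbf{0}}$ (i.e.\ $\mathbf{m}=0$) sits at the \emph{bottom} of the filtration \eqref{eq:CompositionSeriesL} and at the \emph{top} of \eqref{eq:CompositionSeriesR}. For $\mathbf{m}=0$ the constituent $L_1(\nu)=\bigoplus_{\mathbf{m}\in\ell_1(\nu)}V^{\mathbf{m}}$ reduces to exactly $V^{\mathbf{0}}$ precisely when the defining inequality $m_1\leq-\nu-\frac{p}{2}+(j-1)\frac{d}{2}$ with $j=1$ admits only $m_1=0$, i.e.\ at the boundary value $\nu=-\frac{p}{2}$; by the symmetry $\nu\leftrightarrow-\nu$ in passing between \eqref{eq:CompositionSeriesL} and \eqref{eq:CompositionSeriesR}, the dual value $\nu=\frac{p}{2}$ yields $R_1(\nu)=V^{\mathbf{0}}$ as a subquotient. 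Thus the trivial representation occurs as a genuine subrepresentation at $\nu=-\frac{p}{2}$ and as a subquotient at $\nu=\frac{p}{2}$. Finally, since the trivial representation is manifestly unitary, these constituents are indeed unitary, completing the classification.

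The main obstacle I anticipate is \emph{not} the abstract logic but the bookkeeping needed to confirm that no \emph{larger} constituent slips through the unitarity criterion: one must be sure that Lemma~\ref{lem:CmConditionForUnitarity} is applied to every possible subquotient in the composition series, not merely to subrepresentations, and that a constituent spanning $K$-types $\{V^{\mathbf{m}}:\mathbf{m}\in\text{(some interval)}\}$ genuinely contains some $\mathbf{m}\neq0$ whenever it is nontrivial. This rests entirely on the input $C(\mathbf{m})=0\iff\mathbf{m}=0$ from Lemma~\ref{lem:CmForSLpq}, so the real content of the theorem is already packaged in that lemma; the proof proper is a clean deduction. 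The only subtle point is correctly reading off the two boundary values $\nu=\mp\frac{p}{2}$ and the subrepresentation/subquotient distinction from the two filtrations, which follows immediately from the explicit sets $\ell_j(\nu)$, $r_j(\nu)$.
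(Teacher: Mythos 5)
Your proposal is correct and follows essentially the same route as the paper's proof: Lemma~\ref{lem:CmConditionForUnitarity} combined with Lemma~\ref{lem:CmForSLpq} forces any unitary constituent with $\nu\notin i\RR$ to consist of the single $K$-type $V^{\bf 0}$, hence to be trivial, and Theorem~\ref{thm:CompositionSeries} locates it at $\nu=\mp\frac{p}{2}$. One small wording slip: at $\nu=\frac{p}{2}$ the trivial subquotient is the quotient $I(\frac{p}{2})/R_1(\frac{p}{2})$, not $R_1(\frac{p}{2})$ itself (which is the subrepresentation spanned by the $K$-types with $m_1\geq1$).
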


\begin{proof}
With Lemma~\ref{lem:CmConditionForUnitarity} and Lemma~\ref{lem:CmForSLpq} we find that either $\nu\in i\RR$ or the subrepresentation resp. subquotient is isomorphic to $V^{\bf 0}$, the trivial representation. That the trivial representation occurs as subrepresentation for $\nu=-\frac{p}{2}$ follows from Theorem~\ref{thm:CompositionSeries}.
\end{proof}

\begin{remark}
For $\frakg=\sl(r+s,\RR)$ and $\frakg=\sl(r+s,\CC)$ Theorem~\ref{thm:UnitarySubrepsSL} was shown by Howe--Lee~\cite[Propositions 2.5.7 \&\ 3.5.3]{HL99} and for $\frakg=\sl(r+s,\HH)$ by Lee~\cite[Proposition 3.14.2]{Lee07} (see also \cite{DZ97,Pas99,vDM99} for the case $r=1$). The case $\frakg=\frake_{6(-26)}$ does not seem to have been treated before.
\end{remark}

\subsection{The cases $\frakg=\frake_{6(6)}$ and $\frakg=\frake_6(\CC)$}\label{sec:UnitarityExceptional}

Next we study the two exceptional cases $\frakg=\frake_{6(6)}$ and $\frakg=\frake_6(\CC)$ of split rank $r=2$.

\begin{lemma}\label{lem:CmForRank2exceptional}
Suppose $\frakg=\frake_{6(6)}$ or $\frakg=\frake_6(\CC)$. Then $C({\bf m})=0$ if and only if $m_2=0$.
\end{lemma}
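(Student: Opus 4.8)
The plan is to determine exactly when $C(\mathbf{m})$ vanishes for the rank $r=2$ exceptional cases by directly invoking the closed formula from Proposition~\ref{prop:CmSuperFormula}, reading off the structure constants from Table~\ref{tb:ClassificationConstants}, and then analyzing the resulting rational function in $m_1,m_2$. For $\frakg=\frake_{6(6)}$ we have $n=16$, $p=6$, $r=2$, $d=3$, $e=0$, $b=4$, so $d\neq0,2$ and part~(1) of the proposition applies; for $\frakg=\frake_6(\CC)$ we have $n=32$, $p=12$, $r=2$, $d=6$, $e=1$, $b=8$, again with $d\neq0,2$, so part~(1) applies as well. In both cases $\rho_1=\frac{p-1}{2}$ and $\rho_2=\frac{p-1}{2}-\frac{d}{2}$ from \eqref{eq:rhoformula}, and one checks that $2m_2+2\rho_2>1$ holds for all $\mathbf{m}\geq0$, so the hypothesis of the proposition is satisfied and the closed formula is valid throughout.

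First I would substitute the constants into the part~(1) formula and simplify. Writing $x_k:=(2m_k+2\rho_k)^2$, the formula has the shape
\[
 C(\mathbf{m}) = \frac{rb}{2n}-\frac{pb(e+\frac{b}{2}-1)}{2nd(d-2)}\left(1-\prod_{k=1}^2\frac{x_k-(d-1)^2}{x_k-1}\right),
\]
so setting $C(\mathbf{m})=0$ amounts to a polynomial identity after clearing the denominators $x_1-1$ and $x_2-1$ (which are nonzero since $2m_k+2\rho_k>1$). The key point is that the factor $e+\frac{b}{2}-1$ is strictly positive in both cases ($e+\frac{b}{2}-1=1$ for $\frake_{6(6)}$ and $=4$ for $\frake_6(\CC)$), so the second term genuinely contributes and the vanishing condition is a nontrivial algebraic constraint rather than an identity. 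I would then solve this constraint explicitly: the equation becomes a relation between $x_1$ and $x_2$, and using $\rho_1-\rho_2=\frac{d}{2}$ one expects the algebra to collapse so that the only solutions with $\mathbf{m}\geq0$ are those with $m_2=0$.

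The cleanest route is to treat $m_2=0$ as the candidate answer and verify both directions. For the \emph{if} direction, I would substitute $m_2=0$ (so $x_2=(2\rho_2)^2=(p-1-d)^2$) directly into the formula and check that the bracketed expression equals $\frac{rd(d-2)}{p(e+\frac{b}{2}-1)}$, forcing $C(\mathbf{m})=0$; this is a finite calculation in each of the two cases. For the \emph{only if} direction, I would regard $C(\mathbf{m})=0$ as defining a curve in the $(m_1,m_2)$-plane and show that among the lattice points $\mathbf{m}\geq0$ it meets only the line $m_2=0$. Because $C$ is a rational function symmetric-looking in $x_1,x_2$ but with the ordering constraint $m_1\geq m_2\geq0$, the monotonicity of $x_k\mapsto\frac{x_k-(d-1)^2}{x_k-1}$ in $x_k$ (the derivative has fixed sign since $(d-1)^2>1$) lets me argue that the product is strictly monotone in $m_2$ for fixed $m_1$, pinning down the unique vanishing value of $m_2$ and identifying it with $0$.

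The main obstacle I anticipate is the \emph{only if} direction: showing that no $\mathbf{m}$ with $m_2\geq1$ can also make $C(\mathbf{m})$ vanish. The formula is a rational function of two integer variables, and a priori the zero set could contain sporadic lattice points off the line $m_2=0$. The safest way to rule this out is the monotonicity argument above — establishing that, with $m_1$ fixed, $C(\mathbf{m})$ is strictly monotone in $m_2$ (or equivalently in $x_2$) on the relevant range, so that the unique zero value of $x_2$ is exactly $x_2=(p-1-d)^2$, i.e.\ $m_2=0$. Carrying this out requires checking the sign of the relevant factor $e+\frac{b}{2}-1$ against the sign of $d(d-2)$ to ensure the prefactor of the product has the correct sign, but since both $\frake_{6(6)}$ and $\frake_6(\CC)$ have $d>2$ and $e+\frac{b}{2}-1>0$ this sign is unambiguous, and the monotonicity follows. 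A short explicit evaluation in each of the two cases then confirms that the formula indeed specializes to $C(\mathbf{m})=0$ precisely when $m_2=0$.
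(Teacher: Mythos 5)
Your proposal is correct and follows essentially the same route as the paper: the paper's proof likewise substitutes the tabulated constants into Proposition~\ref{prop:CmSuperFormula}~(1) for both $\frake_{6(6)}$ and $\frake_6(\CC)$ (the hypothesis $2m_2+2\rho_2>1$ indeed holds, since $\rho_2=1$ resp.\ $\rho_2=\tfrac{5}{2}$, and $\rho_2\neq\tfrac12$ so no correction term as in the $\so$ cases is needed). The only difference is in the endgame: the paper simplifies $C({\bf m})$ all the way to an explicit factored rational function whose numerator visibly carries the factor $m_2$, e.g.\ $C({\bf m})=\frac{m_2(2m_1+3)(2m_1+7)(m_2+2)}{4(m_1+2)(m_1+3)(2m_2+1)(2m_2+3)}$ for $\frake_{6(6)}$, settling both directions at once, whereas you verify the zero at $m_2=0$ (where the product in the formula vanishes, since $x_2=(2\rho_2)^2=(d-1)^2$ there, together with the identity $rd(d-2)=p(e+\tfrac{b}{2}-1)$ in both cases) and exclude other zeros by strict monotonicity in $m_2$ — which is valid because $x_1>(d-1)^2$ keeps the first factor positive and $d>2$, $e+\tfrac{b}{2}-1>0$ fix the sign of the prefactor.
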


\begin{proof}
We use Proposition~\ref{prop:CmSuperFormula}~(1) to calculate $C({\bf m})$ explicitly.
\begin{enumerate}[(1)]
\item For $\frakg=\frake_{6(6)}$ we have $n=16$, $p=6$, $r=2$, $e=0$, $b=4$ and $d=3$, and hence
\begin{align*}
 C({\bf m}) &= \frac{m_2(2m_1+3)(2m_1+7)(m_2+2)}{4(m_1+2)(m_1+3)(2m_2+1)(2m_2+3)}
\end{align*}
from which the claim directly follows.
\item For $\frakg=\frake_6(\CC)$ we have $n=32$, $p=12$, $r=2$, $e=1$, $b=8$ and $d=6$, and obtain
\begin{align*}
 C({\bf m}) &= \frac{m_2(m_1+3)(m_1+8)(m_2+5)}{4(m_1+5)(m_1+6)(m_2+2)(m_2+3)}.
\end{align*}
This completes the proof.\qedhere
\end{enumerate}
\end{proof}

Recall the definition of the subspaces $L_j(\nu)$ from Section~\ref{sec:CompositionSeries}.

\begin{theorem}\label{thm:UnitarySubrepsE6}
Let either $\frakg=\frake_{6(6)}$ or $\frakg=\frake_6(\CC)$. Then $I(\nu)$ is irreducible and unitary if and only if $\nu\in i\RR$. The only unitary subrepresentations that occur in a reducible $I(\nu)$ are given as follows:
\begin{enumerate}[(1)]
\item Let $\frakg=\frake_{6(6)}$.
\begin{itemize}
\item The subrepresentation $L_1(-3)$ is the trivial representation and hence unitary.
\item The subrepresentation $L_2(-\frac{3}{2})$ with $K$-type decomposition and norm given by
\begin{equation*}
 L_2(-\tfrac{3}{2}) = \bigoplus_{m=0}^\infty{V^{(m,0)}}, \qquad \left\|\sum_{m=0}^\infty{v_m}\right\|^2 = \sum_{m=0}^\infty{\frac{\Gamma(m+\frac{9}{2})}{\Gamma(m+\frac{3}{2})}\|v_m\|_{L^2(X)}^2}
\end{equation*}
for $v_m\in V^{(m,0)}$ is unitary.
\end{itemize}
\item Let $\frakg=\frake_6(\CC)$.
\begin{itemize}
\item The subrepresentation $L_1(-6)$ is the trivial representation and hence unitary.
\item The subrepresentation $L_2(-3)$ with $K$-type decomposition and norm given by
\begin{equation*}
 L_2(-3) = \bigoplus_{m=0}^\infty{V^{(m,0)}}, \qquad \left\|\sum_{m=0}^\infty{v_m}\right\|^2 = \sum_{m=0}^\infty{\frac{\Gamma(m+9)}{\Gamma(m+3)}\|v_m\|_{L^2(X)}^2}
\end{equation*}
for $v_m\in V^{(m,0)}$ is unitary.
\end{itemize}
\end{enumerate}
\end{theorem}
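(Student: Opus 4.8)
The plan is to handle the two assertions of Theorem~\ref{thm:UnitarySubrepsE6} by the same machinery used throughout Section~\ref{sec:Unitarity}, namely the eigenvalue recursion \eqref{eq:EigenvalueRatioIntertwiner}. The irreducibility claim is immediate: by Theorem~\ref{thm:Reducibility}, $I(\nu)$ is irreducible for $\nu\in i\RR$, and for such $\nu$ the representation is unitary since $-\overline\nu=\nu$ makes the $L^2$-pairing $\langle\blank,\blank\rangle_{L^2(X)}$ itself $G$-invariant. Conversely, for $\nu\notin i\RR$ and $I(\nu)$ irreducible, Lemma~\ref{lem:CmConditionForUnitarity} forces $C({\bf m})=0$ for all ${\bf m}\geq0$ with $V^{\bf m}$ occurring, i.e.\ for all ${\bf m}\geq0$; but Lemma~\ref{lem:CmForRank2exceptional} says $C({\bf m})=0$ only when $m_2=0$, a contradiction. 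So the interesting content is the reducible case.

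For the reducible $I(\nu)$, I would first locate the relevant parameters from Theorem~\ref{thm:Reducibility}: for $\frake_{6(6)}$ ($p=6,d=3,r=2$) the reducibility points in $-\NN_0-\tfrac{p}{2}+(j-1)\tfrac{d}{2}$ are $\nu=-3$ (from $j=1$) and $\nu=-\tfrac32$ (from $j=2$), and analogously $\nu=-6,-3$ for $\frake_6(\CC)$ ($p=12,d=6$). The first subrepresentation $L_1(\nu)$ at the deepest point is the trivial representation by Theorem~\ref{thm:CompositionSeries}, hence unitary. For the second point, the composition-series description gives $L_2(\nu)=\bigoplus_{m\geq0}V^{(m,0)}$ (the $K$-types with $m_2=0$), and here Lemma~\ref{lem:CmForRank2exceptional} guarantees $C({\bf m})=0$ on precisely this set, so the obstruction in Lemma~\ref{lem:CmConditionForUnitarity} is absent and \eqref{eq:EigenvalueRatioIntertwiner} becomes a genuine recursion for the eigenvalues $t(m):=t((m,0))$. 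Using Proposition~\ref{prop:ExplicitFormularhosInTermsOfOmega} and \eqref{eq:DifferencePim}, the relation between consecutive $K$-types ${\bf m}=(m,0)$ and ${\bf m}+e_1=(m+1,0)$ reads
\begin{equation*}
 (\pi_{{\bf m}+e_1}-\pi_{\bf m}+\tfrac{2n}{p}\nu)\,t(m+1) = (\pi_{{\bf m}+e_1}-\pi_{\bf m}-\tfrac{2n}{p}\overline\nu)\,t(m),
\end{equation*}
and with $\nu$ real this telescopes to a ratio of the form $t(m+1)/t(m)=(m+{\bf m}^+(1))/(m-\overline{\bf m}^-(1)+\cdots)$ which I expect to solve explicitly as a ratio of Gamma values. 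Plugging in the concrete constants should reproduce the stated weights $\Gamma(m+\tfrac92)/\Gamma(m+\tfrac32)$ and $\Gamma(m+9)/\Gamma(m+3)$.

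The main thing to verify — and the step I expect to carry the real weight — is \emph{positivity}: the intertwiner $T:L_2(\nu)\to I(-\overline\nu)$ yields a unitary structure only if all $t(m)>0$. Concretely I would normalize $t(0)>0$, show every factor in the recursion ratio is strictly positive for $m\geq0$ at the specific value $\nu=-\tfrac32$ (resp.\ $\nu=-3$), and then recognize the resulting product as the claimed Gamma-quotient, which is manifestly positive. Because the $K$-types form the single chain $(0,0)\to(1,0)\to\cdots$, there is no branching to check and the recursion determines $T$ uniquely up to scale on the whole subrepresentation; the only real risk is a sign degeneracy at some intermediate $m$, so I would confirm that neither the numerator nor denominator of the ratio vanishes or changes sign on $\NN_0$ at these parameters. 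Once positivity holds across the chain, Theorem~\ref{thm:EigenvalueRatioIntertwiner} furnishes the required positive-definite intertwiner and hence the invariant inner product, completing the proof.
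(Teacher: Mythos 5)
Your proposal follows essentially the same route as the paper's own proof: both use Lemma~\ref{lem:CmConditionForUnitarity} together with Lemma~\ref{lem:CmForRank2exceptional} to force either $\nu\in i\RR$ or $K$-types $V^{(m,0)}$ only, identify the admissible subrepresentations via Theorem~\ref{thm:CompositionSeries} as $L_1$ and $L_2$ at the stated parameters, and then solve the recursion of Theorem~\ref{thm:EigenvalueRatioIntertwiner} along the single chain $(m,0)\to(m+1,0)$ with the Gamma quotients $t(m)$, whose strict positivity yields the invariant inner product. The only differences are cosmetic: you make explicit the positivity check and the absence of branching in the $K$-type chain, which the paper leaves implicit.
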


\begin{proof}
By Lemma~\ref{lem:CmConditionForUnitarity} and Lemma~\ref{lem:CmForRank2exceptional} we find that a subrepresentation $W\subseteq I(\nu)$ can only be unitary if either $\nu\in i\RR$ or $W$ only contains $K$-types of the form $V^{(m,0)}$, $m\in\NN_0$.
\begin{enumerate}[(1)]
\item For $\frakg=\frake_{6(6)}$ this is by Theorem~\ref{thm:CompositionSeries} precisely the case if either $\nu=-3$ or $\nu=-\frac{3}{2}$. For $\nu=-3$ the subrepresentation $L_1(-\frac{3}{2})$ is the trivial representation and hence unitarizable. For $\nu=-\frac{3}{2}$ the subrepresentation $L_2(-\frac{3}{2})$ contains the $K$-types $V^{(m,0)}$ for $m\in\NN_0$ and it remains to show that this subrepresentation is unitary. By Theorem~\ref{thm:EigenvalueRatioIntertwiner} we obtain the following recurrence relation for the eigenvalues $t(m)$ of an intertwiner $T:L_2(-\frac{3}{2})\to I(\frac{3}{2})$ on $V^{(m,0)}$:
\begin{equation*}
 \frac{t(m+1)}{t(m)} = \frac{\pi_{(m+1,0)}-\pi_{(m,0)}-\frac{2n}{p}\nu}{\pi_{(m+1,0)}-\pi_{(m,0)}+\frac{2n}{p}\nu} = \frac{2m+p+3}{2m+p-3} = \frac{m+\frac{9}{2}}{m+\frac{3}{2}}.
\end{equation*}
Choosing $t(m):=\Gamma(m+\frac{9}{2})\Gamma(m+\frac{3}{2})^{-1}$ we obtain an intertwiner with strictly positive eigenvalues and hence an invariant inner product on $L_2(-\frac{3}{2})$.
\item Using the same argument for $\frakg=\frake_6(\CC)$ we find for $\nu=-3$ the subrepresentation $L_2(-3)$ with $K$-types $V^{(m,0)}$ and every operator $T:L_2(-3)\to I(3)$ with eigenvalue $t(m)$ on $V^{(m,0)}$ satisfying
\begin{align*}
 \frac{t(m+1)}{t(m)} = \frac{\pi_{(m+1,0)}-\pi_{(m,0)}-\frac{2n}{p}\nu}{\pi_{(m+1,0)}-\pi_{(m,0)}+\frac{2n}{p}\nu} = \frac{2m+p+6}{2m+p-6} = \frac{m+9}{m+3}
\end{align*}
provides an invariant form on $L_2(-3)$.\qedhere
\end{enumerate}
\end{proof}

\subsection{The cases $\frakg=\so(2r+1,2r+1)$ and $\frakg=\so(4r+2,\CC)$}\label{sec:UnitarityOrthogonal}

Finally we treat the remaining cases $\frakg=\so(2r+1,2r+1)$ and $\frakg=\so(4r+2,\CC)$.

\begin{lemma}\label{lem:CmForSO}
Suppose that $\frakg=\so(2r+1,2r+1)$ or $\frakg=\so(4r+2,\CC)$. Then $C({\bf m})=0$ if and only if $m_r=0$.
\end{lemma}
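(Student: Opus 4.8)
The plan is to read off the structure constants of the two families from Table~\ref{tb:ClassificationConstants} and feed them into Proposition~\ref{prop:CmSuperFormula}, treating $\frakg=\so(4r+2,\CC)$ (where $d=4$) and $\frakg=\so(2r+1,2r+1)$ (where $d=2$) separately. The two decisive numerical facts I would isolate first are that $\rho_r=\tfrac{e}{2}+\tfrac{b}{4}$ equals $\tfrac32$ in the complex case but only $\tfrac12$ in the split case, and that the combination $e+\tfrac b2-1$ equals $2$ in the complex case but \emph{vanishes} in the split case. The first controls when the hypothesis $2m_r+2\rho_r>1$ of Proposition~\ref{prop:CmSuperFormula} is at my disposal, and the second controls which term of that formula survives.

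For $\frakg=\so(4r+2,\CC)$ we have $d=4\neq0,2$, and since $2m_r+2\rho_r=2m_r+3>1$ for every ${\bf m}\geq0$, Proposition~\ref{prop:CmSuperFormula}~(1) applies unconditionally. Substituting $n=2r(2r+1)$, $p=4r$, $e=1$, $b=4$, I would check that the two prefactors $\tfrac{rb}{2n}$ and $\tfrac{pb(e+b/2-1)}{2nd(d-2)}$ both collapse to $\tfrac{1}{2r+1}$, so the constant term cancels and
\begin{equation*}
 C({\bf m})=\frac{1}{2r+1}\prod_{k=1}^r\frac{(2m_k+2\rho_k)^2-9}{(2m_k+2\rho_k)^2-1}.
\end{equation*}
Here $\rho_k=2(r-k)+\tfrac32$, so each $2m_k+2\rho_k=2m_k+4(r-k)+3\geq3$, with equality to $3$ exactly when $k=r$ and $m_r=0$. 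Thus no denominator vanishes, and the product vanishes precisely when a numerator factor does, i.e.\ if and only if $m_r=0$.

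For $\frakg=\so(2r+1,2r+1)$ we have $d=2$, $e=0$, $b=2$, so $e+\tfrac b2-1=0$. When $m_r\geq1$ one has $2m_k+2\rho_k>1$ for all $k$, so Proposition~\ref{prop:CmSuperFormula}~(2) applies and the entire (finite) sum is annihilated by the vanishing factor $e+\tfrac b2-1$; hence $C({\bf m})=\tfrac{rb}{2n}=\tfrac{1}{2r+1}\neq0$. The genuinely delicate case, which I expect to be the main obstacle, is $m_r=0$: then $2m_r+2\rho_r=1$, Proposition~\ref{prop:CmSuperFormula} no longer computes $C({\bf m})$, and I would go back to the defining expression in Lemma~\ref{lem:Vretare}. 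This is exactly the exceptional configuration $k=r$, $m_r=0$, $\rho_r=\tfrac12$ flagged there, in which $B({\bf m},r)\neq0$ although ${\bf m}-e_r\ngeq0$.

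To handle it I would write $\tilde C({\bf m}):=1-\sum_{k}A({\bf m},k)-\sum_{k}B({\bf m},k)$ for the sum of \emph{all} coefficients as rational functions. Every term with ${\bf m}\pm e_k\ngeq0$ vanishes except this one spurious $B({\bf m},r)$, so the true coefficient satisfies $C({\bf m})=\tilde C({\bf m})+B({\bf m},r)$. Now $\tilde C({\bf m})$ is the rational function evaluated in Proposition~\ref{prop:CmSuperFormula}~(2), which for these constants is identically $\tfrac{1}{2r+1}$ (the singular sum being killed by $e+\tfrac b2-1=0$, and the expression staying finite at $m_r=0$). Evaluating $B({\bf m},r)$ directly from Lemma~\ref{lem:Vretare} at $m_r=0$, $\rho_r=\tfrac12$, the factor $2(m_r+\rho_r)-\tfrac b2-e$ cancels $2(m_r+\rho_r)-1$ in the denominator, the remaining leading fraction equals $-1$, and since $\rho_r=\tfrac12$ and $\tfrac d2=1$ every factor of the product over $j\neq r$ collapses to $1$; hence $B({\bf m},r)=-\tfrac{p}{2n}=-\tfrac{1}{2r+1}$. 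Therefore $C({\bf m})=\tfrac{1}{2r+1}-\tfrac{1}{2r+1}=0$, which closes the argument. The one point requiring genuine care is precisely this bookkeeping: one must recognize that the naive formula of Proposition~\ref{prop:CmSuperFormula} ceases to compute $C({\bf m})$ on the wall $2m_r+2\rho_r=1$ and correctly restore the single term $B({\bf m},r)$, whose numerator and denominator both vanish there and must be evaluated only after cancellation.
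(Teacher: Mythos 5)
Your proof is correct and takes essentially the same route as the paper: Proposition~\ref{prop:CmSuperFormula} with the constants from Table~\ref{tb:ClassificationConstants} gives the product formula in the complex case and the constant $\tfrac{1}{2r+1}$ in the split case for $m_r\geq1$, and on the wall $m_r=0$, $\rho_r=\tfrac12$ you restore the spurious term $B({\bf m},r)=-\tfrac{1}{2r+1}$ exactly as the paper does. Your explicit check that the full rational-function expression $\tilde C({\bf m})$ stays finite and equal to $\tfrac{1}{2r+1}$ at the wall (the apparent $0\cdot\infty$ being resolved by $e+\tfrac b2-1=0$) is just a more careful articulation of the paper's remark that $B({\bf m},r)$ was already ``taken into account'' in deriving Proposition~\ref{prop:CmSuperFormula}~(2).
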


\begin{proof}
Again we calculate $C({\bf m})$ explicitly using Proposition~\ref{prop:CmSuperFormula}.
\begin{enumerate}[(1)]
\item For $\frakg=\so(2r+1,2r+1)$ the rank of $X$ is $r$ and $n=r(2r+1)$, $p=2r$, $e=0$, $b=d=2$, $\rho_k=\frac{2r-2k+1}{2}$. Since in this case $e+\frac{b}{2}-1=0$ it follows from Proposition~\ref{prop:CmSuperFormula}~(2) that for $m_r>0$
\begin{equation}
 C({\bf m}) = \frac{rb}{2n} = \frac{1}{2r+1}.\label{eq:CmForSOWithPositiveMr}
\end{equation}
Now let $m_r=0$. Since $2m_r+2\rho_r=1$ we cannot directly apply Proposition~\ref{prop:CmSuperFormula}~(2). Using the formula in Lemma~\ref{lem:Vretare} we find that
\begin{equation*}
 B({\bf m},r) = \frac{p}{2n}\frac{2m_r-1}{2m_r+1}\prod_{j<r}\frac{(2m_r-1)^2-(2m_j+2\rho_j)^2}{(2m_r+1)^2-(2m_j+2\rho_j)^2} = -\frac{1}{2r+1}
\end{equation*}
and hence non-zero. In the derivation of the formula in Proposition~\ref{prop:CmSuperFormula}~(2) the term $B({\bf m},r)$ was taken into account although ${\bf m}-e_r\ngeq0$. Therefore we can determine $C({\bf m})$ by adding $B({\bf m},r)$ to \eqref{eq:CmForSOWithPositiveMr} and obtain
\begin{align*}
 C({\bf m}) = \frac{1}{2r+1}+B({\bf m},r) = 0.
\end{align*}
\item For $\frakg=\so(4r+2,\CC)$ the rank of $X$ is $r$ and $n=2r(2r+1)$, $p=4r$, $e=1$, $b=d=4$, $\rho_k=\frac{4r-4k+3}{2}$ and we find with Proposition~\ref{prop:CmSuperFormula}~(1)
\begin{align*}
 C({\bf m}) &= \frac{1}{2r+1}\prod_{j=1}^r\frac{(m_j+2r-2j)(m_j+2r-2j+3)}{(m_j+2r-2j+1)(m_j+2r-2j+2)}.
\end{align*}
For ${\bf m}\geq0$ all factors in the product are strictly positive except the one for $j=r$ which is equal to
\begin{equation*}
 \frac{m_r(m_r+3)}{(m_r+1)(m_r+2)}.
\end{equation*}
Hence the whole product vanishes if and only if $m_r=0$.\qedhere
\end{enumerate}
\end{proof}

\begin{remark}
For $\frakg=\so(2r+1,2r+1)$ Lemma~\ref{lem:CmForSO} was already shown by Johnson~\cite[Proposition~3.6~(iii)]{Joh90} using different methods.
\end{remark}

To express the invariant inner product in these cases we define the Siegel--Gindikin Gamma function $\Gamma_{k,d}({\bf s})$ as a meromorphic function of ${\bf s}\in\CC^k$ by
\begin{equation*}
 \Gamma_{k,d}({\bf s}) := \prod_{j=1}^k\Gamma\left(s_j-(j-1)\frac{d}{2}\right).
\end{equation*}
We further identify a complex number $\sigma\in\CC$ with the vector $(\sigma,\ldots,\sigma)\in\CC^k$.

\begin{theorem}\label{thm:UnitarySubrepsSO}
Let either $\frakg=\so(2r+1,2r+1)$ or $\frakg=\so(4r+2,\CC)$. Then $I(\nu)$ is irreducible and unitary if and only if $\nu\in i\RR$. The only unitary subrepresentations that occur in a reducible $I(\nu)$ are given as follows:
\begin{enumerate}[(1)]
\item Let $\frakg=\so(2r+1,2r+1)$. For $j=0,1,\ldots,r-1$ the subrepresentation $L_{j+1}(-(r-j))$ with $K$-type decomposition and norm given by
\begin{align*}
 L_{j+1}(-(r-j)) &= \bigoplus_{{\bf m}\geq0, m_{j+1}=0}{V^{\bf m}},\\
 \left\|\sum_{{\bf m}\geq0,\,m_{j+1}=0}{v_{\bf m}}\right\|^2 &= \sum_{{\bf m}\geq0,\,m_{j+1}=0}^\infty{\frac{\Gamma_{j+1,2}({\bf m}+(2r-j))}{\Gamma_{j+1,2}({\bf m}+j)}\|v_{\bf m}\|_{L^2(X)}^2}
\end{align*}
for $v_{\bf m}\in V^{\bf m}$ is unitary.
\item Let $\frakg=\so(4r+2,\CC)$. For $j=0,1,\ldots,r-1$ the subrepresentation $L_{j+1}(-2(r-j))$ with $K$-type decomposition and norm given by
\begin{align*}
 L_{j+1}(-2(r-j)) &= \bigoplus_{{\bf m}\geq0, m_{j+1}=0}{V^{\bf m}},\\
 \left\|\sum_{{\bf m}\geq0,\,m_{j+1}=0}{v_{\bf m}}\right\|^2 &= \sum_{{\bf m}\geq0,\,m_{j+1}=0}^\infty{\frac{\Gamma_{j+1,4}({\bf m}+2(2r-j))}{\Gamma_{j+1,4}({\bf m}+2j)}\|v_{\bf m}\|_{L^2(X)}^2}
\end{align*}
for $v_{\bf m}\in V^{\bf m}$ is unitary.
\end{enumerate}
\end{theorem}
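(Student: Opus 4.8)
The plan is to handle the irreducible and reducible parameters separately, using the vanishing criterion $C({\bf m})=0\iff m_r=0$ of Lemma~\ref{lem:CmForSO} to locate the unitary constituents and the spectrum-generating recurrence of Theorem~\ref{thm:EigenvalueRatioIntertwiner} to realize the invariant forms explicitly. For $\nu\in i\RR$ irreducibility is immediate from Theorem~\ref{thm:Reducibility}, and since then $-\overline\nu=\nu$ the pairing $\langle\blank,\blank\rangle_{L^2(X)}$ is a $G$-invariant positive-definite Hermitian form on $I(\nu)$, so $I(\nu)$ is unitary. Conversely, if $I(\nu)$ were irreducible and unitary for some $\nu\notin i\RR$, then applying Lemma~\ref{lem:CmConditionForUnitarity} to $W=I(\nu)$ would force $C({\bf m})=0$ for every ${\bf m}\geq0$, contradicting Lemma~\ref{lem:CmForSO}. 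This establishes the first assertion.

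For reducible $\nu$ (necessarily real) I first extract the necessary condition: by Lemmas~\ref{lem:CmConditionForUnitarity} and~\ref{lem:CmForSO} every $K$-type $V^{\bf m}$ occurring in a unitary subrepresentation $W$ satisfies $m_r=0$. I then test this against the submodule chain of Theorem~\ref{thm:CompositionSeries}. Writing the level of $L_{j'}(\nu)$ as $M_{j'}:=-\nu-\tfrac p2+(j'-1)\tfrac d2$, so that $L_{j'}(\nu)=\{{\bf m}\geq0:m_{j'}\leq M_{j'}\}$, the condition $m_{j'}=0$ forces $m_{j'}=\dots=m_r=0$, whereas any level $M_{j'}\geq1$ already admits the constant tuple $(1,\dots,1)$ with $m_r=1$. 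Hence a submodule is contained in the span of the $K$-types with $m_r=0$ precisely when its level vanishes; since the smallest submodule $L_{j_1}(\nu)$ has integral level $M_{j_1}\in\NN_0$, a nonzero unitary subrepresentation can occur only for $M_{j_1}=0$, i.e.\ $\nu=-\tfrac p2+j\tfrac d2$ with $j=j_1-1$, and then $W=L_{j+1}(\nu)=\{m_{j+1}=0\}$ is forced. For the reducible parameters $\nu>0$ the submodules $R_{j'}(\nu)$ each contain arbitrarily large constant tuples, hence $K$-types with $m_r\neq0$, and so none is unitary. Specializing $-\tfrac p2+j\tfrac d2$ to the two algebras reproduces the listed values $-(r-j)$ and $-2(r-j)$.

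It remains to produce the invariant inner product on each $W=L_{j+1}(\nu)$. Since $C({\bf m})=0$ throughout $W$, Corollary~\ref{cor:Sm} gives ${\bf m}\notin S_{\bf m}$, so~\eqref{eq:EigenvalueRatioIntertwiner} carries no diagonal constraint; the only transitions remaining inside $W$ are ${\bf m}\to{\bf m}+e_k$ with $1\leq k\leq j$, and by~\eqref{eq:DifferencePim} the resulting ratio $t({\bf m}+e_k)/t({\bf m})$ depends only on $m_k$ and $k$. The recurrence is therefore consistent and telescopes into a product of Gamma-quotients defining the eigenvalues $t({\bf m})$ of an intertwiner $T\colon W\to I(-\nu)$, and a direct inspection of the shifts shows that every factor is strictly positive for $0\leq j\leq r-1$; thus $T$ has strictly positive eigenvalues and $(v,w)\mapsto\langle v,Tw\rangle_{L^2(X)}$ is the asserted form. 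The transitions ${\bf m}\to{\bf m}+e_{j+1}$ that would leave $W$ impose nothing, since $W({\bf m}+e_{j+1})=0$, and for $\so(2r+1,2r+1)$ with $j=r-1$ the anomalous value $B({\bf m},r)\neq0$ at $m_r=0$ of Lemma~\ref{lem:Vretare} is already absorbed into the identity $C({\bf m})=0$ and never enters $S_{\bf m}$.

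The main obstacle is this last step: verifying that the single telescoped product simultaneously solves all instances of~\eqref{eq:EigenvalueRatioIntertwiner} and stays strictly positive across the entire $K$-spectrum of $W$. The positivity rests on the shifts $2r-j-k+1$ and $j-k+1$ (and their doubles in the $\so(4r+2,\CC)$ case) being positive for all admissible $k$, which is exactly where the range $0\leq j\leq r-1$ enters; rewriting the product in terms of the Siegel--Gindikin function $\Gamma_{k,d}$ is then only a matter of reindexing.
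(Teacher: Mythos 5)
Your proposal is correct and follows essentially the same route as the paper: necessity via Lemma~\ref{lem:CmConditionForUnitarity} combined with Lemma~\ref{lem:CmForSO}, identification of the candidate subrepresentations $L_{j+1}(\nu)$ at the level-zero parameters through Theorem~\ref{thm:CompositionSeries}, and construction of the intertwiner by solving the recurrence of Theorem~\ref{thm:EigenvalueRatioIntertwiner}, telescoping into the Gamma-quotients $t({\bf m})$ with the positivity of the shifts checked for $0\leq j\leq r-1$. You merely spell out a few steps the paper leaves implicit (the argument that only level-zero submodules avoid $K$-types with $m_r\neq0$, the dismissal of the $R_{j'}(\nu)$ via large constant tuples, and the consistency of the multi-index recurrence), which is sound but not a different method.
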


\begin{proof}
Using again Lemma~\ref{lem:CmConditionForUnitarity} and Lemma~\ref{lem:CmForSO} we find that a subrepresentation $W\subseteq I(\nu)$ can only be unitary if either $\nu\in i\RR$ or $W$ only contains $K$-types of the form $V^{\bf m}$ with $m_r=0$.
\begin{enumerate}[(1)]
\item Using Theorem~\ref{thm:CompositionSeries} we find that for $\frakg=\so(2r+1,2r+1)$ the subrepresentations of $I(\nu)$ containing only $V^{\bf m}$ with $m_r=0$ occur for $\nu=-(r-j)$, $j=0,1,\ldots,r-1$, and are given by $L_{j+1}(-(r-j))$. As in the proof of Theorem~\ref{thm:UnitarySubrepsE6} we use Theorem~\ref{thm:EigenvalueRatioIntertwiner} to find that a map $T:L_{j+1}(-(r-j))\to I((r-j))$ which acts by a scalar $t({\bf m})$ on the $K$-type $V^{\bf m}$ is an intertwining operator if and only if
\begin{align*}
 \frac{t({\bf m}+e_k)}{t({\bf m})} &= \frac{\pi_{{\bf m}+e_k}-\pi_{\bf m}-\frac{2n}{p}\nu}{\pi_{{\bf m}+e_k}-\pi_{\bf m}+\frac{2n}{p}\nu} = \frac{m_k+(2r-k-j+1)}{m_k+(j-k+1)}.
\end{align*}
Choosing
\begin{equation*}
 t({\bf m}) := \frac{\Gamma_{j+1,2}({\bf m}+(2r-j))}{\Gamma_{j+1,2}({\bf m}+j)}
\end{equation*}
we obtain an intertwiner $T:L_{j+1}(-(r-j))\to I((r-j))$ with strictly positive eigenvalues and therefore it provides an invariant inner product on $L_{j+1}(-(r-j))$.
\item Using Theorem~\ref{thm:CompositionSeries} we find that for $\frakg=\so(4r+2,\CC)$ the subrepresentations of $I(\nu)$ containing only $V^{\bf m}$ with $m_r=0$ occur for $\nu=-2(r-j)$, $j=0,1,\ldots,r-1$, and are given by $L_{j+1}(-2(r-j))$. The condition~\eqref{eq:EigenvalueRatioIntertwiner} for the eigenvalues $t({\bf m})$ of an intertwiner $T:L_{j+1}(-2(r-j))$ on the $K$-type $V^{\bf m}$ reads
\begin{align*}
 \frac{t({\bf m}+e_k)}{t({\bf m})} &= \frac{\pi_{{\bf m}+e_k}-\pi_{\bf m}-\frac{2n}{p}\nu}{\pi_{{\bf m}+e_k}-\pi_{\bf m}+\frac{2n}{p}\nu} = \frac{m_k+2(2r-k-j+1)}{m_k+2(j-k+1)}
\end{align*}
which shows the claim by the same method as in (1).\qedhere
\end{enumerate}
\end{proof}

\section{Gelfand--Kirillov dimension and associated varieties}\label{sec:AssocVar}

We calculate the Gelfand--Kirillov dimension of some unitary representations we found in the previous section and use it to find their associated variety.

\subsection{Gelfand--Kirillov dimension}

Let us briefly recall the definition of the Gelfand--Kirillov dimension (see \cite{Vog78} for details). We denote by
\begin{align*}
 0=\calU_{-1}(\frakg)\subseteq\calU_0(\frakg)\subseteq\calU_1(\frakg)\subseteq\cdots\subseteq\calU_k(\frakg)\subseteq\cdots
\end{align*}
the canonical filtration of the universal enveloping algebra $\calU(\frakg)$ of $\frakg$. For a finitely generated $\calU(\frakg)$-module $W$ choose a finite-dimensional generating subspace $W_0\subseteq W$ and define the Gelfand--Kirillov dimension of $W$ by
\begin{align*}
 \GKdim(W) &:= \limsup_{k\to\infty}{\frac{\log\dim\calU_k(\frakg)W_0}{\log k}}.
\end{align*}
This definition does not depend on the choice of $W_0$.

For an irreducible unitary representation $(\pi,\calH)$ of $G$ let $\calH_{K\textup{-finite}}$ denote the space of $K$-finite vectors. $\calH_{K\textup{-finite}}$ is a finitely generated $\calU(\frakg)$-module and we define the Gelfand--Kirillov dimension of $\pi$ by
\begin{equation*}
 \GKdim(\pi) := \GKdim(\calH_{K\textup{-finite}}).
\end{equation*}

\subsection{Associated variety and nilpotent orbits}

The Gelfand--Kirillov dimension of an irreducible unitary representation $\pi$ is closely related to the associated variety of $\pi$. Let us recall the definition of the associated variety (see \cite{Vog91} for details). For a finitely generated $\calU(\frakg)$-module $W$ we again choose a finite-dimensional generating subspace $W_0\subseteq W$ and define a filtration $W=\cup_{k=0}^\infty{W_k}$ of $W$ by $W_k:=\calU_k(\frakg)W_0$. The graded algebra $\gr\calU(\frakg)=\bigoplus_{k=0}^\infty{\calU_k(\frakg)/\calU_{k-1}(\frakg)}$ is isomorphic to $S(\frakg_\CC)$ and therefore we can regard the corresponding graded module $\gr W=\bigoplus_{k=0}^\infty{W_k/W_{k-1}}$ as a module over $S(\frakg_\CC)$. Denote by $J:=\Ann_{S(\frakg_\CC)}(\gr W)$ the annihilator ideal of $\gr W$ in $S(\frakg_\CC)$. The affine variety $\calV(J)\subseteq\frakg_\CC^*$ corresponding to the ideal $J\subseteq S(\frakg_\CC)\cong\CC[\frakg_\CC^*]$ is called the \textit{associated variety} of $W$. It does not depend on the choice of the generating subspace $W_0$ (see \cite[Proposition 2.2]{Vog91}).

We define the associated variety $\calV(\pi)$ of an irreducible unitary representation $(\pi,\calH)$ to be the associated variety of the $\calU(\frakg)$-module $\calH_{K\textup{-finite}}$. The variety $\calV(\pi)$ is always a $K_\CC$-stable closed subvariety of $\frakp_\CC^*$ consisting of nilpotent elements and hence the union of finitely many nilpotent $K_\CC$-orbits (see \cite[Corollary 5.23]{Vog91}). All irreducible components of the associated variety $\calV(\pi)$ have the same dimension and the relation to the Gelfand--Kirillov dimension of $\pi$ is given by
\begin{equation*}
 \GKdim(\pi) = \dim\calV(\pi).
\end{equation*}

Assume that $G$ is non-Hermitian. Then there exists a unique non-zero nilpotent $K_\CC$-orbit $\calO_{\min}^{K_\CC}\subseteq\frakp_\CC^*$ of minimal dimension $m(\frakg)$ (see \cite[Proposition 2.2]{KO12}). If $\frakg_\CC$ is a simple complex Lie algebra then there is also a unique minimal nilpotent coadjoint orbit $\calO_{\min}^{G_\CC}\subseteq\frakg_\CC^*$. By results of Kostant--Rallis~\cite{KR71}, Brylinski~\cite{Bry98} and Okuda~\cite{Oku11} the two orbits are related as follows:

\begin{proposition}[{see \cite{Bry98,KR71,Oku11}}]\label{prop:MinOrbitWithRealPoints}
Let $\frakg$ be a non-Hermitian simple real Lie algebra. Then exactly one of the following three cases occurs:
\begin{enumerate}[(1)]
\item $\frakg_\CC$ is a simple complex Lie algebra and $\calO_{\min}^{G_\CC}\cap\frakp_\CC^*=\calO_{\min}^{K_\CC}$. In this case the dimension of $\calO_{\min}^{K_\CC}$ is half the dimension of $\calO_{\min}^{G_\CC}$ and hence given by the following table:
\begin{equation*}
\begin{array}{c|c|c|c|c|c|c|c|c|c}
\frakg_\CC & A_k & B_k\ (k\geq2) & C_k & D_k & \frakg_2^\CC & \frakf_4^\CC & \frake_6^\CC & \frake_7^\CC & \frake_8^\CC\\
\hline
\frac{1}{2}\dim\calO_{\min}^{G_\CC} & k & 2k-2 & k & 2k-3 & 3 & 8 & 11 & 17 & 29
\end{array}
\end{equation*}
\item $\frakg_\CC$ is a simple complex Lie algebra and $\calO_{\min}^{G_\CC}\cap\frakp_\CC^*=\emptyset$. This happens exactly for $\frakg$ in the following list
\begin{equation*}
\begin{array}{c|c|c|c|c|c}
\frakg & \su^*(2k) & \so(k+1,1) & \sp(p,q) & \frake_{6(-26)} & \frakf_{4(-20)}\\
\hline
\dim\calO_{\min}^{K_\CC} & 4k-4 & k & 2(p+q)-1 & 16 & 11
\end{array}
\end{equation*}
\item $\frakg=\frakh_\CC$ is the complexification of a simple real Lie algebra $\frakh$. In this case
\begin{equation*}
 \dim\calO_{\min}^{K_\CC} = 2\,m(\frakh).
\end{equation*}
\end{enumerate}
\end{proposition}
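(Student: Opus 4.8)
The plan is to split along the structure of $\frakg_\CC$ and reduce each case to a known nilpotent orbit. Recall the standard dichotomy from the structure theory of real simple Lie algebras: for $\frakg$ simple real, the complexification $\frakg_\CC$ is either simple, or $\frakg$ itself carries an invariant complex structure and is the realification of a complex simple Lie algebra, in which case $\frakg_\CC$ splits as a sum of two copies of that algebra. The first alternative gives cases (1) and (2), the second gives case (3). In case (3), writing $\frakg=\frakh_\CC$, the maximal compact subalgebra $\frakk$ is a compact real form $\fraku$ of the complex algebra $\frakg$ and $\frakp=i\fraku$; hence both $\frakk_\CC$ and $\frakp_\CC$ are isomorphic to $\frakg$ as $\frakk_\CC$-modules, with $K_\CC$ acting by the adjoint representation. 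The nilpotent $K_\CC$-orbits in $\frakp_\CC$ therefore coincide with the complex nilpotent orbits in $\frakh_\CC$, so $\calO_{\min}^{K_\CC}$ is the minimal complex nilpotent orbit and $\dim\calO_{\min}^{K_\CC}=\dim_\CC\calO_{\min}^{\frakh_\CC}=2\,m(\frakh)$, the last equality following from case (1) applied to a split real form of $\frakg$ (for which the minimal complex orbit always has real points).

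For $\frakg_\CC$ simple I would invoke the Kostant--Sekiguchi correspondence, a dimension-halving bijection between nilpotent $K_\CC$-orbits in $\frakp_\CC$ and nilpotent $G$-orbits in $\frakg$ under which $\calO_{\min}^{G_\CC}\cap\frakp_\CC^*$ consists exactly of the real forms of the minimal complex orbit. The key observation is that any nonzero nilpotent $e\in\frakp_\CC$ satisfies $\overline{\calO_{\min}^{G_\CC}}\subseteq\overline{G_\CC\cdot e}$, whence $\dim_\CC K_\CC\cdot e\geq\frac{1}{2}\dim_\CC\calO_{\min}^{G_\CC}$. Thus, if $\calO_{\min}^{G_\CC}\cap\frakp_\CC^*$ is nonempty, it is a nonzero nilpotent $K_\CC$-orbit of the smallest possible dimension; combined with the uniqueness of the minimal $K_\CC$-orbit from \cite{KO12} this forces it to be a single orbit equal to $\calO_{\min}^{K_\CC}$, of dimension $\frac{1}{2}\dim_\CC\calO_{\min}^{G_\CC}$. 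This is case (1), and the resulting table of half-dimensions is the classical list of minimal nilpotent orbit dimensions organized by Cartan type.

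The heart of the argument is to decide for which $\frakg$ the intersection $\calO_{\min}^{G_\CC}\cap\frakp_\CC^*$ is empty, which is case (2). Here I would import the description of $\calO_{\min}^{G_\CC}\cap\frakp_\CC^*$ from Kostant--Rallis \cite{KR71} and Brylinski \cite{Bry98} together with Okuda's classification \cite{Oku11} of the real forms for which the minimal complex orbit has no real points; this yields precisely the list $\su^*(2k)$, $\so(k+1,1)$, $\sp(p,q)$, $\frake_{6(-26)}$, $\frakf_{4(-20)}$. For each of these the minimal nonzero nilpotent $K_\CC$-orbit is then the (strictly larger) complexified minimal real nilpotent orbit, whose dimension is computed individually. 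The main obstacle is exactly this case-by-case classification: one must verify the emptiness of the intersection for each listed algebra, e.g.\ through the criterion that $\calO_{\min}^{G_\CC}$ has a real point if and only if $\frakg$ contains a root vector for the highest root defined over $\RR$ (equivalently, a long real restricted root), which has to be checked against the Satake diagram of every non-Hermitian real form. This is the combinatorial input supplied by \cite{Oku11}.
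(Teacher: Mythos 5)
Note first that the paper gives no proof of this proposition: it is imported wholesale from Kostant--Rallis \cite{KR71}, Brylinski \cite{Bry98} and Okuda \cite{Oku11}, and your proposal is a correct reconstruction along exactly those lines --- the dimension-halving $\dim_\CC K_\CC\cdot e=\tfrac{1}{2}\dim_\CC G_\CC\cdot e$ together with the minimality of $\calO_{\min}^{G_\CC}$ in the closure order and the uniqueness of the minimal $K_\CC$-orbit from \cite{KO12} for case (1), the identification of nilpotent $K_\CC$-orbits in $\frakp_\CC$ with complex adjoint nilpotent orbits (plus the split real form to pin down $m(\frakh)$) for case (3), and Okuda's case-by-case classification for the emptiness statement and the dimension table in case (2), which is precisely the hard combinatorial input the paper also outsources. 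The only imprecision is your parenthetical non-emptiness criterion, which should be phrased as the existence of a long root of $\frakg_\CC$ that is real with respect to a maximally split $\theta$-stable Cartan subalgebra (a ``long real restricted root'' is not quite the right notion); since you invoke it only as a pointer to \cite{Oku11}, nothing in your argument depends on its exact form.
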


In fact, as shown by Okuda~\cite{Oku11}, in case (2) the orbit $\calO_{\min}^{K_\CC}$ is the intersection of the nilpotent coadjoint orbit $G_\CC\cdot\calO_{\min}^{K_\CC}\subseteq\frakg_\CC^*$ with $\frakp_\CC^*$. This orbit is the unique coadjoint orbit of minimal dimension in $\frakg_\CC^*$ which has nontrivial intersection with $\frakp_\CC^*$ (or equivalently with $\frakg^*$). For more details on nilpotent orbits we refer to the book by Collingwood-McGovern~\cite{CM93}.

A direct consequence of this is that for every infinite-dimensional irreducible unitary representation $(\pi,\calH)$ of $G$ we have
\begin{equation}
 \GKdim(\pi) \geq m(\frakg).\label{eq:LowerBoundGKDIM}
\end{equation}

\subsection{Small representations}

Using these observations we can now determine the associated variety of some unitary representations constructed in Section~\ref{sec:Unitarity}. For this we first calculate their Gelfand--Kirillov dimension.

\begin{proposition}\label{prop:EstimateGKDIM}
\begin{enumerate}[(1)]
\item Let $X$ be an arbitrary symmetric $R$-space. For $\nu\in i\RR$ we have
\begin{equation*}
 \GKdim(\pi_\nu) = n = \dim(\frakn).
\end{equation*}
\item For $\frakg=\frake_{6(6)}$, $\frakg=\frake(\CC)$, $\frakg=\so(2r+1,2r+1)$ and $\frakg=\so(4r+2,\CC)$ let $W$ be the unique unitary subrepresentation of $I(\nu)$ with $K$-types $W=\bigoplus_{m=0}^\infty{V^{(m,0,\ldots,0)}}$. Then
\begin{equation*}
 \GKdim(W) = m(\frakg).
\end{equation*}
\end{enumerate}
\end{proposition}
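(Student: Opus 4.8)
The plan is to compute $\GKdim$ directly from the filtration $\calU_k(\frakg)W_0$ for a well-chosen generating subspace, reading off the growth rate from the $K$-type decomposition of Proposition~\ref{prop:KtypeDecompositionL2K} together with the Weyl dimension formula. In both parts I take $W_0:=V^{\bf 0}$, the line spanned by the normalized spherical vector. For (1) the module $I(\nu)_{K\textup{-finite}}$ is irreducible by Theorem~\ref{thm:Reducibility}, and for (2) the module $W$ is the bottom term of the composition series in Theorem~\ref{thm:CompositionSeries}, hence irreducible; in either case $W_0$ generates.

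The crucial step is to identify $\calU_k(\frakg)V^{\bf 0}$ exactly as a sum of $K$-types. By Proposition~\ref{prop:ExplicitFormularhosInTermsOfOmega} the operators $\td\rho_\nu(T)$, $T\in\frakp_\CC$, map $V^{\bf m}$ into $V^{{\bf m}+e_j}\oplus V^{{\bf m}-e_j}\oplus V^{\bf m}$, and the raising coefficient $A({\bf m},j)(\nu+{\bf m}^+(j))$ is non-zero whenever ${\bf m}+e_j\geq0$: indeed $A({\bf m},j)\neq0$ by Lemma~\ref{lem:Vretare}, while ${\bf m}^+(j)=m_j+(r-j)\tfrac{d}{2}+\tfrac{e+1}{2}+\tfrac{b}{4}>0$, so $\nu+{\bf m}^+(j)\neq0$ for $\nu\in i\RR$ in part (1) and for the specific real $\nu$ in part (2). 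Thus each projection $P_{{\bf m}+e_j}\circ\td\rho_\nu(T)|_{V^{\bf m}}$ is, for suitable $T$, a non-zero $K$-homomorphism of irreducible $K$-modules, hence surjective. Since $\frakk$ preserves each $K$-type and $\calU(\frakg)=\calU(\frakp_\CC)\calU(\frakk_\CC)$, an induction on $k$ gives in case (1)
\[
 \calU_k(\frakg)V^{\bf 0}=\bigoplus_{\substack{{\bf m}\geq0\\ |{\bf m}|\leq k}}V^{\bf m},\qquad |{\bf m}|:=\textstyle\sum_j m_j,
\]
because any ${\bf m}\geq0$ is reached from ${\bf 0}$ by exactly $|{\bf m}|$ raisings staying in the dominant cone while no ${\bf m}$ with $|{\bf m}|>k$ is reachable in $k$ steps; in case (2) the submodule $W$ contains only the $K$-types $V^{(m,0,\dots,0)}$, so the same argument confined to this one-parameter chain yields $\calU_k(\frakg)V^{\bf 0}=\bigoplus_{m=0}^k V^{(m,0,\dots,0)}$.

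It then remains to estimate $\dim\calU_k(\frakg)V^{\bf 0}=\sum_{{\bf m}\in F_k}\dim V^{\bf m}$ as $k\to\infty$. By the Weyl dimension formula for $\frakk_\CC$ with highest weight $\lambda_{\bf m}=\sum_i m_i\gamma_i$, the function ${\bf m}\mapsto\dim V^{\bf m}$ is, for regular ${\bf m}$, a polynomial whose degree equals the number of positive roots of $\Delta(\frakk_\CC,\frakt_\CC)$ not orthogonal to the relevant support, counted with multiplicity, the orthogonal roots contributing constant factors. Using $\langle\gamma_j,\gamma_k\rangle=-\tfrac{n}{p}\delta_{jk}$ and the multiplicities $b,d,e$ from Section~\ref{sec:RootData}, every positive restricted root is non-orthogonal to a generic $\lambda_{\bf m}$, giving degree $N:=rb+r(r-1)d+re$; summing a degree-$N$ polynomial over the $r$-dimensional region $\{{\bf m}\geq0,\ |{\bf m}|\leq k\}$ gives growth $\sim k^{N+r}$, so $\GKdim(\pi_\nu)=N+r$. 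In case (2) only $\tfrac{\gamma_1}{2}$, $\tfrac{\gamma_1\pm\gamma_k}{2}\ (k>1)$ and $\gamma_1$ are non-orthogonal to $\gamma_1$, giving degree $M:=b+2(r-1)d+e$ in the single variable $m$, so $\sum_{m\leq k}\dim V^{(m,0,\dots,0)}\sim k^{M+1}$ and $\GKdim(W)=M+1$.

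Finally I would match these numbers to the assertion. From \eqref{eq:rhoformula} one has $b+(r-1)d+e+1=p+\tfrac{b}{2}$, whence $N+r=r\big(p+\tfrac{b}{2}\big)$; a direct check against Table~\ref{tb:ClassificationConstants} confirms $r\big(p+\tfrac{b}{2}\big)=n$ for every non-unital symmetric $R$-space, proving (1). For (2) I read off $M+1=b+2(r-1)d+e+1$ and verify case by case, using Table~\ref{tb:ClassificationConstants} and Proposition~\ref{prop:MinOrbitWithRealPoints}, that this equals $m(\frakg)$: for $\so(2r+1,2r+1)$ it gives $4r-1=\tfrac{1}{2}\dim\calO_{\min}^{G_\CC}$ for $D_{2r+1}$, for $\frake_{6(6)}$ it gives $11$, and for the complex cases $\so(4r+2,\CC)$ and $\frake_6(\CC)$ it gives $8r-2$ and $22$, each matching $\dim\calO_{\min}^{K_\CC}$ from Proposition~\ref{prop:MinOrbitWithRealPoints}. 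The main obstacle is the exact determination of the filtration $\calU_k(\frakg)V^{\bf 0}$ in the second paragraph: one must ensure Proposition~\ref{prop:ExplicitFormularhosInTermsOfOmega} permits no shifts other than $\pm e_j$ and that the raising coefficients never vanish along the chain, so that the $\calU_k(\frakg)$-filtration is genuinely commensurable with the $K$-type filtration; the degree counts and the closing numerics are then routine.
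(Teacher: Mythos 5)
Your overall strategy is sound and, where it overlaps with the paper, the numerics agree exactly; but you take a genuinely different route in both parts. For (1) the paper does not compute anything: it simply quotes the general result \cite[Lemma~6.5]{Vog78}, which gives $\GKdim(\pi_\nu)=\dim\frakn$ for unitary principal series directly. For (2) the paper deliberately avoids your hardest step, the exact determination of the filtration $\calU_k(\frakg)V^{\bf 0}$: it uses only the trivial upper bound $\calU_k(\frakg)W_0\subseteq\bigoplus_{m\leq k}V^{(m,0,\ldots,0)}$ (which needs no non-vanishing of raising coefficients, just Proposition~\ref{prop:ExplicitFormularhosInTermsOfOmega}), yielding $\GKdim(W)\leq D+1$, and then closes the gap from below with the a priori bound \eqref{eq:LowerBoundGKDIM}, $\GKdim(W)\geq m(\frakg)$, after checking case by case that $D+1=m(\frakg)$. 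Your surjectivity argument for the raising maps (non-zero $K$-equivariant map into a multiplicity-free $K$-type, hence onto) is correct and buys you an unconditional computation $\GKdim(W)=M+1$ independent of the orbit-theoretic lower bound; the paper's version buys economy. Your uniform degree count via restricted roots with multiplicities, $M=b+2(r-1)d+e$, reproduces the paper's case-by-case counts ($D=10,21,4r-2,8r-3$ computed from the explicit root systems $C_4$, $E_6$, $B_r\times B_r$, $D_{2r+1}$), and is legitimate since a positive root of $\Delta(\frakk_\CC,\frakh_\CC)$ pairs non-trivially with $\lambda_{\bf m}\in\frakt_\CC^*$ exactly when its restriction to $\frakt_\CC$ does, and the number of positive roots restricting to a given restricted root is its multiplicity (with a compatible ordering, as the paper also uses).

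Two caveats. First, a genuine gap in your (1): the statement is for an \emph{arbitrary} symmetric $R$-space, including unital ones, but your argument runs entirely through Proposition~\ref{prop:KtypeDecompositionL2K}, Lemma~\ref{lem:Vretare} and Proposition~\ref{prop:ExplicitFormularhosInTermsOfOmega}, all of which the paper states under the standing non-unital hypothesis, and your closing identity $r\bigl(p+\tfrac{b}{2}\bigr)=n$ is checked only against the non-unital table (you say as much yourself). As written you have proved (1) only for non-unital $X$; either cite \cite[Lemma~6.5]{Vog78} as the paper does, or redo the root data in the unital case. Second, in (2) your blanket claim that $A({\bf m},j)(\nu+{\bf m}^+(j))\neq0$ whenever ${\bf m}+e_j\geq0$ ``for the specific real $\nu$'' is false for $j\geq2$: e.g.\ for $\frakg=\so(2r+1,2r+1)$ one has $W=L_2(-(r-1))$, and at ${\bf m}=(m,0,\ldots,0)$, $j=2$, the factor $\nu+{\bf m}^+(2)=-(r-1)+(r-1)=0$ --- this vanishing is precisely the reason $W$ is a subrepresentation at all. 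Your argument survives because you only use $j=1$ along the chain, where $\nu+{\bf m}^+(1)$ equals $m+\tfrac{3}{2}$, $m+3$, $m+1$, $m+2$ in the four cases, but this must be verified for each $\nu$ rather than asserted, since $\nu$ is negative real and cancellation is exactly what happens in the other directions.
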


\begin{proof}
\begin{enumerate}[(1)]
\item This follows directly from the more general statement \cite[Lemma 6.5]{Vog78}.
\item The strategy for the other cases can be described as follows: First calculate the dimension of the $K$-types $V^{(m,0,\ldots,0)}$ as a function of $m$ and determine the highest power $D$ of $m$, i.e. $\dim V^{(m,0,\ldots,0)}=cm^D+o(m^D)$ with $c\neq0$. This can be done by using the Weyl dimension formula
\begin{equation}
 \dim V^{(m,0,\ldots,0)} = \prod_{\alpha\in\Delta^+(\frakk_\CC,\frakh_\CC)}{\frac{\langle\lambda_{(m,0,\ldots,0)}+\rho^\frakh,\alpha\rangle}{\langle\rho^\frakh,\alpha\rangle}}.\label{eq:WeylDimFormula}
\end{equation}
Here $\frakh_\CC\subseteq\frakk_\CC$ is a split Cartan subalgebra such that $\frakh_\CC=(\frakh_\CC\cap\frakm_\CC)\oplus\frakt_\CC$ with $\frakt$ the maximal torus in $\frakn_\frakk$ from \eqref{eq:DefTorusT}, where $\Delta^+(\frakk_\CC,\frakh_\CC)$ a set of positive roots for $\Delta(\frakk_\CC,\frakh_\CC)$, compatible with the ordering of $\frakt_\CC^*$, and $\rho^\frakh$ denotes half the sum of all positive roots. Next observe that if $W_0=V^{\bf 0}$ generates a subrepresentation $W\subseteq I(\nu)$ with $K$-types $V^{(m,0,\ldots,0)}$, $m\in\NN_0$, we have
\begin{equation*}
 \dim\calU_k(\frakg)W_0 \leq \dim\bigoplus_{m=0}^kV^{(m,0,\ldots,0)} = c'k^{D+1}+o(k^{D+1})
\end{equation*}
with $c'\neq0$ and hence
\begin{equation*}
 \GKdim(W) = \lim_{k\to\infty}{\frac{\log\dim\calU_k(\frakg)W_0}{\log k}} \leq \lim_{k\to\infty}\frac{\log k^{D+1}}{\log k} = D+1.
\end{equation*}
This gives an upper bound for the Gelfand--Kirillov dimension of $W$. Compare this upper bound with the lower bound~\eqref{eq:LowerBoundGKDIM} to find that $\GKdim(W)=D+1=m(\frakg)$.
\begin{enumerate}[(a)]
\item $\frakg=\frake_{6(6)}$. The Lie algebra $\frakk=\sp(4)$ is of type $C_4$. Its corresponding symmetric subalgebra is given by $\frakm=\sp(2)+\sp(2)$. Write $\frakk=\frakm\oplus\frakn_{\frakk}$ with $\frakn_\frakk$ as in \eqref{eq:nIntersection} and choose a split Cartan subalgebra $\frakh_\CC\subseteq\frakk_\CC$. We can label the roots $\Delta(\frakk_\CC,\frakh_\CC)=\set{\frac{\pm\varepsilon_j\pm\varepsilon_k}{2}}{1\leq j<k\leq4}\cup\set{\pm\varepsilon_j}{1\leq j\leq4}$ such that $\frakt_\CC^*=\CC(\varepsilon_1+\varepsilon_2)+\CC(\varepsilon_3+\varepsilon_4)$ and
\begin{align*}
 \varepsilon_1+\varepsilon_2+\varepsilon_3+\varepsilon_4 &= 2\gamma_1,\\
 \varepsilon_1+\varepsilon_2-\varepsilon_3-\varepsilon_4 &= 2\gamma_2,
\end{align*}
where $\gamma_1$ and $\gamma_2$ were defined in Section~\ref{sec:RootData}. As positive roots we may take $\Delta^+(\frakk_\CC,\frakh_\CC)=\set{\frac{\varepsilon_j\pm\varepsilon_k}{2}}{1\leq j<k\leq4}\cup\set{\varepsilon_j}{1\leq j\leq4}$. Then the highest weight $\lambda_{(m,0)}=m\gamma_1$ of $V^{(m,0)}$ is given by
\begin{equation*}
 \lambda_{(m,0)} = m\frac{\varepsilon_1+\varepsilon_2+\varepsilon_3+\varepsilon_4}{2}.
\end{equation*}
In the Weyl dimension formula~\eqref{eq:WeylDimFormula} the only terms contributing a power of $m$ to the dimension occur for $\alpha$ equal to
\begin{equation*}
 \varepsilon_j\ (j=1,2,3,4), \qquad \frac{\varepsilon_j+\varepsilon_k}{2}\ (1\leq j<k\leq 4).
\end{equation*}
Counting these roots yields $D=10$. Comparing with $m(\frakg)$ gives by Proposition~\ref{prop:MinOrbitWithRealPoints}~(1) that $D+1=11=m(\frakg)$.
\item $\frakg=\frake_6(\CC)$. Note that $(\frakk,\frakm)=(\frake_6,\so(10)+\RR)$ is a Hermitian symmetric pair. Therefore, the roots $\gamma_1$ and $\gamma_2$ defined in Section~\ref{sec:RootData} form a maximal system of strongly orthogonal non-compact roots. In the notation of Freudenthal--de Vries~\cite{FdV69} we have
\begin{equation*}
 \gamma_1 = \left(\begin{matrix}&&2&&\\1&2&3&2&1\end{matrix}\right), \qquad \gamma_2 = \left(\begin{matrix}&&0&&\\1&1&1&1&1\end{matrix}\right).
\end{equation*}
Let $\Delta^+(\frakk_\CC,\frakh_\CC)$ denote the positive roots of $\frake_6$ as in \cite[Table B]{FdV69}. Then $\lambda_{(m,0)}=m\gamma_1$ is the highest weight of $V^{(m,0)}$. Since all roots in $\Delta^+(\frakk_\CC,\frakh_\CC)$ have the same length and the angle between two simple roots is either $\frac{\pi}{2}$ or $\frac{2\pi}{3}$ a positive root $\alpha$ contributes a power of $m$ to the dimension in \eqref{eq:WeylDimFormula} if and only if
\begin{align*}
 \alpha &= \left(\begin{matrix}&&a&&\\ *&*&*&*&*\end{matrix}\right)
\end{align*}
with $a\neq0$. (In fact, only $a=1,2$ occur.) Counting these roots using \cite[Table B]{FdV69} yields $D=21$. Comparison with $m(\frakg)$ gives by Proposition~\ref{prop:MinOrbitWithRealPoints}~(3) that $D+1=22=m(\frakg)$.
\item $\frakg=\so(2r+1,2r+1)$. The Lie algebra $\frakk=\so(2r+1)+\so(2r+1)$ is of type $B_r\times B_r$. Its corresponding symmetric subalgebra is given by $\frakm=\so(2r+1)$. Write $\frakk=\frakm\oplus\frakn_{\frakk}$ with $\frakn_\frakk$ as in \eqref{eq:nIntersection} and choose a split Cartan subalgebra $\frakh_\CC\subseteq\frakk_\CC$. We can label the roots $\set{\pm\varepsilon_i}{1\leq i\leq2r}\cup\set{\pm\varepsilon_i\pm\varepsilon_j}{1\leq i<j\leq r\mbox{ or }r+1\leq i<j\leq2r}$ such that $\frakt_\CC^*=\sum_{i=1}^r{\CC(\varepsilon_i+\varepsilon_{i+r})}$ and
\begin{align*}
 \varepsilon_i+\varepsilon_{i+r} &= \gamma_i, & 1\leq i\leq r,
\end{align*}
where $\gamma_i$ were defined in Section~\ref{sec:RootData}. As positive roots we may choose $\Delta^+(\frakk_\CC,\frakh_\CC)=\set{\varepsilon_i}{1\leq i\leq2r}\cup\set{\varepsilon_i\pm\varepsilon_j}{1\leq i<j\leq r\mbox{ or }r+1\leq i<j\leq2r}$. The highest weight $\lambda_{(m,0,\ldots,0)}=m\gamma_1$ of $V^{(m,0,\ldots,0)}$ is given by
\begin{equation*}
 \lambda_{(m,0,\ldots,0)} = m(\varepsilon_1+\varepsilon_{r+1}).
\end{equation*}
In the Weyl dimension formula~\eqref{eq:WeylDimFormula} the only terms contributing a power of $m$ to the dimension occur for $\alpha$ equal to
\begin{equation*}
 \varepsilon_1,\quad\varepsilon_1\pm\varepsilon_i\ (2\leq i\leq r), \quad \varepsilon_{r+1},\quad\varepsilon_{r+1}\pm\varepsilon_i\ (r+2\leq i\leq2r).
\end{equation*}
Counting these roots yields $D=4r-2$ and by Proposition~\ref{prop:MinOrbitWithRealPoints}~(1) we find $D+1=4r-1=m(\frakg)$.
\item $\frakg=\so(4r+2,\CC)$. The Lie algebra $\frakk=\so(4r+2)$ is of type $D_{2r+1}$. Its corresponding symmetric subalgebra is given by $\frakm=\fraku(2r+1)$. Write $\frakk=\frakm\oplus\frakn_{\frakk}$ with $\frakn_\frakk$ as in \eqref{eq:nIntersection} and choose a split Cartan subalgebra $\frakh_\CC\subseteq\frakk_\CC$. We can label the roots $\set{\pm\varepsilon_i\pm\varepsilon_j}{1\leq i<j\leq2r+1}$ such that $\frakt_\CC^*=\sum_{i=1}^r{\CC(\varepsilon_{2i-1}+\varepsilon_{2i})}$ and
\begin{align*}
 \varepsilon_{2i-1}+\varepsilon_{2i} &= \gamma_i, & 1\leq i\leq r,
\end{align*}
where $\gamma_i$ were defined in Section~\ref{sec:RootData}. As positive roots we may choose $\Delta^+(\frakk_\CC,\frakh_\CC)=\set{\varepsilon_i\pm\varepsilon_j}{1\leq i<j\leq2r+1}$. Then the highest weight $\Lambda_{(m,0,\ldots,0)}=m\gamma_1$ of $V^{(m,0,\ldots,0)}$ is given by
\begin{equation*}
 \Lambda_{(m,0)} = m(\varepsilon_1+\varepsilon_2).
\end{equation*}
By the Weyl dimension formula~\eqref{eq:WeylDimFormula} the only terms contributing a power of $m$ to the dimension occur for $\alpha$ equal to
\begin{align*}
 \varepsilon_1+\varepsilon_2,\quad\varepsilon_1\pm\varepsilon_i\ (3\leq i\leq2r+1),\quad\varepsilon_2\pm\varepsilon_i\ (3\leq i\leq2r+1).
\end{align*}
Counting these roots yields $D=8r-3$. Comparing with $m(\frakg)$ gives by Proposition~\ref{prop:MinOrbitWithRealPoints}~(3) that $D+1=8r-2=m(\frakg)$.\qedhere
\end{enumerate}
\end{enumerate}
\end{proof}

\begin{theorem}\label{thm:AssociatedVariety}
Let $W$ be one of the following representations:
\begin{enumerate}[(1)]
\item $\frakg=\sl(1+s,\RR),\sl(1+s,\CC),\sl(1+s,\HH),\frake_{6(-26)}$: Let $W=I(\nu)$ with $\nu\in i\RR$, the unitary principal series associated to the corresponding non-unital symmetric $R$-space of rank $1$,
\item $\frakg=\frake_{6(6)},\frake_6(\CC),\so(2r+1,2r+1),\so(4r+2,\CC)$: Let $W$ be the unique unitary subrepresentation of $I(\nu)$ with $K$-types $W=\bigoplus_{m=0}^\infty{V^{(m,0,\ldots,0)}}$.
\end{enumerate}
Then the associated variety of $W$ is the closure of the minimal nilpotent $K_\CC$-orbit:
\begin{equation*}
 \calV(W) = \overline{\calO_{\min}^{K_\CC}}.
\end{equation*}
In particular, the Gelfand--Kirillov dimension of $W$ attains its minimum among all infinite-dimensional irreducible unitary representations of $G$ and any of its covering groups.
\end{theorem}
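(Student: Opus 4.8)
The plan is to deduce the identity $\calV(W)=\overline{\calO_{\min}^{K_\CC}}$ purely from a dimension count, exploiting the two structural facts recalled above: that the associated variety $\calV(W)$ is a $K_\CC$-stable equidimensional union of finitely many nilpotent $K_\CC$-orbits in $\frakp_\CC^*$ with $\dim\calV(W)=\GKdim(W)$, and that $\calO_{\min}^{K_\CC}$ is the \emph{unique} non-zero nilpotent $K_\CC$-orbit of minimal dimension $m(\frakg)$. Granting these, it suffices to show $\GKdim(W)=m(\frakg)$; then every irreducible component of $\calV(W)$ is the closure of its unique open dense $K_\CC$-orbit, which has dimension $m(\frakg)$, and since $W$ is infinite-dimensional we have $\GKdim(W)>0$, so no component is the zero orbit. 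Uniqueness of the minimal orbit then forces each component to equal $\overline{\calO_{\min}^{K_\CC}}$, whence $\calV(W)=\overline{\calO_{\min}^{K_\CC}}$.

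First I would establish $\GKdim(W)=m(\frakg)$ in both cases. For the representations in (2) this is exactly Proposition~\ref{prop:EstimateGKDIM}~(2), so nothing further is needed. For the rank-one representations in (1), Proposition~\ref{prop:EstimateGKDIM}~(1) gives $\GKdim(\pi_\nu)=n=\dim\frakn$, and it remains to verify $n=m(\frakg)$ case by case by comparing the value of $n$ from Table~\ref{tb:ClassificationConstants} with $\dim\calO_{\min}^{K_\CC}$ supplied by Proposition~\ref{prop:MinOrbitWithRealPoints}. Concretely: for $\frakg=\sl(1+s,\RR)$ (complexification of type $A_s$, case~(1) of that proposition) one gets $\dim\calO_{\min}^{K_\CC}=\tfrac12\dim\calO_{\min}^{G_\CC}=s=n$; for $\frakg=\sl(1+s,\CC)=(\sl(1+s,\RR))_\CC$ (case~(3)) one gets $2\,m(\sl(1+s,\RR))=2s=n$; for $\frakg=\sl(1+s,\HH)=\su^*(2s+2)$ (case~(2)) one gets $4(s+1)-4=4s=n$; and for $\frakg=\frake_{6(-26)}$ (case~(2)) one gets $16=n$. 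This is the only slightly fiddly bookkeeping in the argument.

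With $\GKdim(W)=m(\frakg)$ in hand, I would finish exactly as sketched in the first paragraph: $\calV(W)$ is a non-zero equidimensional union of nilpotent $K_\CC$-orbits of dimension $m(\frakg)$, each irreducible component is the closure of its open dense orbit, and the only non-zero nilpotent orbit of dimension $m(\frakg)$ is $\calO_{\min}^{K_\CC}$, giving $\calV(W)=\overline{\calO_{\min}^{K_\CC}}$.

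For the final ``in particular'' assertion, note that $\GKdim(W)=m(\frakg)$ together with the lower bound \eqref{eq:LowerBoundGKDIM} shows that $\GKdim(W)$ is minimal among all infinite-dimensional irreducible unitary representations. The extension to covering groups is immediate, because $\GKdim$ is computed from the underlying $(\frakg,K)$-module structure while the quantities $m(\frakg)$ and $\calO_{\min}^{K_\CC}\subseteq\frakp_\CC^*$ depend only on the complexified data $(\frakg_\CC,\frakk_\CC,\frakp_\CC)$, which is unchanged under passage to a covering. The main difficulty of the whole argument is thus not located in this final theorem at all but is absorbed into Proposition~\ref{prop:EstimateGKDIM}, namely the determination of the leading power $D$ of $m$ in $\dim V^{(m,0,\ldots,0)}$ via the Weyl dimension formula; here the only residual care is the rank-one arithmetic identification $n=m(\frakg)$ above.
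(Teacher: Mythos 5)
Your proposal is correct and takes essentially the same route as the paper's own proof: reduce everything to showing $\GKdim(W)=m(\frakg)$, cite Proposition~\ref{prop:EstimateGKDIM}~(2) for the representations in case (2), and in the rank-one case (1) combine Proposition~\ref{prop:EstimateGKDIM}~(1) with the identical case-by-case check $n=\dim\frakn=m(\frakg)$ via Proposition~\ref{prop:MinOrbitWithRealPoints}, concluding from the uniqueness of the non-zero nilpotent $K_\CC$-orbit of minimal dimension. You merely spell out slightly more explicitly than the paper the equidimensionality/open-dense-orbit argument and the covering-group remark (GK-dimension and $\calO_{\min}^{K_\CC}$ depend only on the underlying $(\frakg,K)$-module and complexified data), which is harmless and accurate.
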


\begin{proof}
First note that $\calO_{\min}^{K_\CC}$ is the unique coadjoint orbit with smallest possible dimension that can occur in the associated variety of a non-trivial irreducible unitary representation. We show that the associated variety of the representation in question has in all cases the same dimension as $\calO_{\min}^{K_\CC}$ and hence the result follows. Since the dimension of the associated variety is equal to the Gelfand--Kirillov dimension of the representation it suffices to show that the Gelfand--Kirillov dimension coincides with the dimension $m(\frakg)$ of $\calO_{\min}^{K_\CC}$. For case (2) this is already explicitly stated in Proposition~\ref{prop:EstimateGKDIM}. For case (1) the statement in Proposition~\ref{prop:EstimateGKDIM}~(1) reduces this to show that $\dim\frakn=m(\frakg)$ which we check for the separate cases:
\begin{enumerate}[(a)]
\item $\frakg=\sl(1+s,\RR)$, $\frakn=M(1\times s,\RR)$. By Proposition~\ref{prop:MinOrbitWithRealPoints}~(1) the orbit $\calO_{\min}^{K_\CC}$ has dimension $s$ and hence $\dim\calO_{\min}^{K_\CC}=\dim\frakn$.
\item $\frakg=\sl(1+s,\CC)$, $\frakn=M(1\times s,\CC)$. We have $\dim\calO_{\min}^{K_\CC}=2s$ by Proposition~\ref{prop:MinOrbitWithRealPoints}~(3) and hence $\dim\calO_{\min}^{K_\CC}=\dim\frakn$.
\item $\frakg=\sl(1+s,\HH)$, $\frakn=M(1\times s,\HH)$. Since $\frakg\cong\su^*(2s+2)$ we have by Proposition~\ref{prop:MinOrbitWithRealPoints}~(2) that $\dim\calO_{\min}^{K_\CC}=4s=\dim\frakn$.
\item $\frakg=\frake_{6(-26)}$, $\frakn=M(1\times2,\OO)$. Again by Proposition~\ref{prop:MinOrbitWithRealPoints}~(2) we obtain $\dim\calO_{\min}^{K_\CC}=16=\dim\frakn$.\qedhere
\end{enumerate}
\end{proof}

\begin{remark}
For the groups $G=\SO(2r+1,2r+1)$ and $G=E_{6(6)}$ the irreducible unitary representation $W$ in Theorem~\ref{thm:AssociatedVariety} is in fact the minimal representation, i.e. the annihilator of $W$ in $\calU(\frakg)$ is equal to the Joseph ideal. This can e.g. be seen by comparing both infinitesimal character and associated variety of the annihilator of $W$ and the Joseph ideal. They both agree and hence, by a result of Duflo, this implies that the annihilator of $W$ is the Joseph ideal. However, we will give a different proof of minimality in a forthcoming paper which does not use the infinitesimal character. In both cases in question the complex Lie algebra $\frakg_\CC$ is simple and not of type $A$. Hence the Joseph ideal is the unique completely prime two-sided ideal in $\calU(\frakg)$ with associated variety equal to the minimal nilpotent coadjoint orbit in $\frakg_\CC^*$ by \cite[Proposition 10.2]{Jos76} (see also \cite[Theorem 3.1]{GS04}). From Theorem~\ref{thm:AssociatedVariety} it follows that the annihilator of $W$ in $\calU(\frakg)$ has associated variety equal to the minimal nilpotent coadjoint orbit in $\frakg_\CC^*$. Therefore it remains to show that the annihilator is a completely prime ideal in $\calU(\frakg)$. We will show that the representation $W$ can be realized by regular differential operators on an irreducible algebraic variety. The ring of those differential operators does not contain zero divisors and hence the annihilator of $W$ is completely prime. This method was applied before in \cite[Theorem 2.18]{HKM}.
\end{remark}

\appendix

\section{Proof of Proposition~\ref{prop:CmSuperFormula}}\label{sec:CmFormula}

This section is devoted to the proof of Proposition~\ref{prop:CmSuperFormula}. For this we put $x_k:=2(m_k+\rho_k)$, $\alpha:=\frac{b}{2}+1$, $\beta:=\frac{b}{2}+e$ and $\gamma:=d$. Then by Lemma~\ref{lem:Vretare}
\begin{equation*}
 C({\bf m}) = 1-\frac{p}{2n}d(x),
\end{equation*}
where we abbreviate
\begin{align*}
 d(x) &:= \sum_{k=1}^r{\left(a(x,k)+b(x,k)\right)},\\
 a(x,k) &:= \frac{(x_k + \alpha)(x_k+\beta)}{x_k\,(x_k+1)}\cdot\prod_{j\neq k}\frac{(x_k + \gamma)^2-x_j^2}{x_k^2-x_j^2},\\
 b(x,k) &:= \frac{(x_k - \alpha)(x_k-\beta)}{x_k\,(x_k-1)}\cdot\prod_{j\neq k}\frac{(x_k - \gamma)^2-x_j^2}{x_k^2-x_j^2}.
\end{align*}
Proposition~\ref{prop:CmSuperFormula} now follows from the following identities:

\begin{proposition}
Let $x\in\CC^r$ with $x_j\neq x_k$ for $j\neq k$ and $x_j\neq\pm1$. Further let $\alpha,\beta,\gamma\in\CC$.
\begin{enumerate}[(1)]
\item For $\gamma\neq0,2$ we have
	\[
		d(x) = 2r + \frac{2(1-\alpha)(1-\beta)}{\gamma(\gamma-2)}
					\left(1-\prod_{k=1}^r\frac{(\gamma-1)^2-x_k^2}{1-x_k^2}\right).
	\]
\item For $\gamma\in\{0,2\}$ we have
	\[
		d(x) = 2r - 2(1-\alpha)(1-\beta)\sum_{k=1}^r\frac{1}{1-x_k^2}.
	\]
\end{enumerate}
\end{proposition}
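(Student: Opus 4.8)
The plan is to realise the sum $d(x)$ as a sum of residues of a single rational function and to evaluate that sum via the behaviour at the remaining poles and at infinity. Concretely, I would introduce
\[
	\Phi(z):=\frac{2(z+\alpha)(z+\beta)}{\gamma\,(2z+\gamma)(z+1)}\prod_{j=1}^r\frac{(z+\gamma)^2-x_j^2}{z^2-x_j^2}.
\]
The point of the prefactor is the following cancellation. At the simple pole $z=x_k$ of $\tfrac1{z^2-x_k^2}$, the residue of that factor is $\tfrac1{2x_k}$, while the $j=k$ term of the numerator product evaluates to $(x_k+\gamma)^2-x_k^2=\gamma(2x_k+\gamma)$, cancelling the factor $\gamma(2z+\gamma)$ in the denominator; the leftover $\tfrac{2}{2x_k}=\tfrac1{x_k}$ then yields exactly $\operatorname{Res}_{z=x_k}\Phi=a(x,k)$. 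The same mechanism at $z=-x_k$, where $(-x_k+\gamma)^2-x_k^2=\gamma(\gamma-2x_k)$ cancels $\gamma(2z+\gamma)\big|_{z=-x_k}$, gives $\operatorname{Res}_{z=-x_k}\Phi=b(x,k)$. Hence $d(x)=\sum_{k=1}^r\big(\operatorname{Res}_{z=x_k}\Phi+\operatorname{Res}_{z=-x_k}\Phi\big)$.

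Next I would collect the sum of residues from infinity. As numerator and denominator of $\Phi$ both have degree $2r+2$, one has $\Phi(z)\to\tfrac1\gamma$, and for generic $x$ the only finite poles are the simple poles $\pm x_k$, $z=-1$ and $z=-\gamma/2$. By partial fractions (equivalently, by the residue theorem on $\PP^1$) the sum of all finite residues equals the coefficient $c_{-1}$ of $z^{-1}$ in the expansion of $\Phi$ at infinity, that is $c_{-1}=\lim_{z\to\infty}z\big(\Phi(z)-\tfrac1\gamma\big)$. Using $\tfrac{(z+\gamma)^2-x_j^2}{z^2-x_j^2}=1+\tfrac{2\gamma}{z}+O(z^{-2})$ for each factor, a short expansion gives $c_{-1}=\tfrac1\gamma\big((\alpha+\beta)-1-\tfrac\gamma2+2\gamma r\big)$. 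Subtracting the two unwanted poles leaves
\[
	d(x)=c_{-1}-\operatorname{Res}_{z=-1}\Phi-\operatorname{Res}_{z=-\gamma/2}\Phi.
\]

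The remaining residues are elementary. At $z=-1$ the product becomes $\prod_k\tfrac{(\gamma-1)^2-x_k^2}{1-x_k^2}$, producing precisely the term $\tfrac{2(1-\alpha)(1-\beta)}{\gamma(\gamma-2)}\prod_k\tfrac{(\gamma-1)^2-x_k^2}{1-x_k^2}$ that appears in assertion (1); at $z=-\gamma/2$ one has $(z+\gamma)^2=z^2=\tfrac{\gamma^2}4$, so every factor of the product equals $1$ and the contribution is a pure constant. Assembling the three pieces, (1) reduces to the scalar identity
\[
	\big[(\alpha+\beta)-1-\tfrac\gamma2\big](\gamma-2)+2\big(\alpha-\tfrac\gamma2\big)\big(\beta-\tfrac\gamma2\big)=2(1-\alpha)(1-\beta),
\]
which I would check by expanding both sides (the $\gamma^2$-terms cancel). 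Since both sides of (1) are rational functions of $x$, it suffices to carry out the residue computation for generic $x$ (all poles distinct and simple) and invoke the identity theorem for rational functions to obtain (1) under the stated hypotheses.

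For part (2) I would avoid the singular values $\gamma\in\{0,2\}$ by a continuity argument. Each $a(x,k)$ and $b(x,k)$, hence $d(x)$, is a polynomial in $\gamma$, so the right-hand side of (1) extends continuously through $\gamma=0$ and $\gamma=2$ and its limit must equal $d(x)$. As $\gamma\to0$ (respectively $\gamma\to2$) the product $\prod_k\tfrac{(\gamma-1)^2-x_k^2}{1-x_k^2}\to1$, so the bracket is an indeterminate $0/0$; one application of l'H\^opital (differentiating the product logarithmically and using $\tfrac{d}{d\gamma}(\gamma-1)^2=2(\gamma-1)$) turns it into $-\sum_k\tfrac1{1-x_k^2}$, which gives exactly the formula in (2). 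The main difficulty is conceptual rather than computational: it lies in guessing the auxiliary function $\Phi$, in particular the normalising prefactor $\tfrac{2}{\gamma(2z+\gamma)}$ that simultaneously produces the cancellations at $\pm x_k$ and a controllable pole at $z=-\gamma/2$; once $\Phi$ is fixed, the residue bookkeeping and the final limit are routine.
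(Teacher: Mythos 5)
Your argument is correct; I checked the key computations. The residues of your $\Phi$ at $z=\pm x_k$ do reproduce $a(x,k)$ and $b(x,k)$, precisely because $(\pm x_k+\gamma)^2-x_k^2=\gamma(\gamma\pm 2x_k)$ cancels the factor $\gamma(2z+\gamma)$ evaluated at $z=\pm x_k$; the expansion at infinity gives $c_{-1}=\tfrac1\gamma\bigl(\alpha+\beta-1-\tfrac\gamma2+2\gamma r\bigr)$ as you claim; and your scalar identity holds, both sides equalling $2-2(\alpha+\beta)+2\alpha\beta$. Your genericity step is also sound: for fixed $\gamma\neq 0,2$ the poles $\pm x_k$, $-1$, $-\gamma/2$ are simple and distinct for $x$ off a proper closed subvariety, and since both sides of (1) are rational in $x$ the identity propagates to every $x$ where both sides are defined. (In fact, for $d(x)$ to be defined one needs $x_k\neq 0$ and $x_j^2\neq x_k^2$, slightly more than the hypotheses as literally stated --- an imprecision the paper's statement shares --- and your identity-theorem step handles exactly this.)

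Your route is genuinely different from the paper's. The paper expands each product $\prod_{j\neq k}\bigl((x_k\pm\gamma)^2-x_j^2\bigr)/\bigl(x_k^2-x_j^2\bigr)$ as a sum over subsets $J\subseteq\{1,\ldots,r\}\setminus\{k\}$, interchanges the order of summation, kills all non-boundary terms with a Lagrange-interpolation lemma ($\sum_{k}y_k^m/\prod_{j\neq k}(y_k-y_j)=0$ for $m<N-1$, applied in the squared variables after adjoining the extra node $x_0=1$), and resums the survivors via $\sum_{J}(\gamma(\gamma-2))^{|J|}\prod_{j\notin J}(1-x_j^2)=\prod_k\bigl((\gamma-1)^2-x_k^2\bigr)$. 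That interpolation lemma is the residue theorem for $p(z)/\prod_j(z-y_j)$ in disguise, so the two proofs are cousins; but yours is more economical, compressing the subset combinatorics into a single global rational function whose three remaining contributions (at $-1$, at $-\gamma/2$, and at infinity) assemble the closed formula, at the price of having to guess the normalizing prefactor $2/\bigl(\gamma(2z+\gamma)\bigr)$. What the paper's route buys in exchange is that case (2) falls out of the same computation with no limiting argument: at $\gamma\in\{0,2\}$ the weight $(\gamma(\gamma-2))^{|J|-1}$ annihilates all terms with $|J|\geq 2$ directly, whereas you deduce (2) by continuity in $\gamma$ plus one application of l'H\^{o}pital. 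That deduction is legitimate, as you observe, because $d(x)$ is visibly a polynomial in $\gamma$ for fixed $x$, and your logarithmic differentiation of the product correctly yields $-\sum_k(1-x_k^2)^{-1}$ for the bracketed quotient at both $\gamma=0$ and $\gamma=2$.
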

\begin{proof}
For convenience, we set $\underline r:=\{1,\ldots, r\}$. For fixed $k\in\underline r$ we first note that
\begin{align*}
	\prod_{j\neq k}\frac{(x_k \pm \gamma)^2-x_j^2}{x_k^2-x_j^2}
		= \prod_{j\neq k}\left(1 + \frac{\gamma(\gamma\pm 2x_k)}{x_k^2-x_j^2}\right)
		= \sum_{J\subseteq\underline r\setminus\{k\}}
				\frac{\gamma^{|J|}(\gamma\pm2x_k)^{|J|}}{\prod_{j\in J}(x_k^2-x_j^2)}
\end{align*}
Then a short calculation shows that 
\[
	\sum_{k=1}^r a(x,k) + b(x,k)
		= \sum_{k=1}^r \sum_{J\subseteq\underline r\setminus\{k\}}
			\frac{\gamma^{|J|}p_{|J|}(x_k)}{(x_k^2-1)\prod_{j\in J}(x_k^2-x_j^2)},
\]
where
\[
	p_m(x):=\frac{1}{x}\,\big((x+\alpha)(x+\beta)(x-1)(\gamma + 2x)^m
		+ (x-\alpha)(x-\beta)(x+1)(\gamma-2x)^m\big).
\]
We note that $p_m$ is an even polynomial of degree $\leq m+2$. Changing the order of summation by using the bijection $J\mapsto J\cup\{k\}$ between subsets of $\underline r$ not containing $k$ and those containing $k$, we obtain
\[
	d(x) = \sum_{\substack{J\subseteq\underline r,\\J\neq\emptyset}} \gamma^{|J|-1}\sum_{k\in J}
					\frac{p_{|J|-1}(x_k)}{(x_k^2-1)\prod_{j\in J\setminus\{k\}}(x_k^2-x_j^2)}.
\]
In order to evaluate the inner sum, we set $x_0:=1$ and $J_0:=J\cup\{0\}$, and find that
\[
	\sum_{k\in J}\frac{p_{|J|-1}(x_k)}{(x_k^2-1)\prod_{j\in J\setminus\{k\}}(x_k^2-x_j^2)}
		= \left(\sum_{k\in J_0}\frac{p_{|J|-1}(x_k)}{\prod_{j\in J_0\setminus\{k\}}(x_k^2-x_j^2)}\right)
			- \frac{p_{|J|-1}(x_0)}{\prod_{j\in J}(x_0^2-x_j^2)}.
\]
Due to the following lemma (applied to $\{y_1,\ldots,y_{|J_0|}\} = \{x_j\,|\,j\in J_0\}$), the first term vanishes if $|J|>1$. Therefore,
\[
	d(x) = \sum_{k=1}^r\left(\frac{p_0(x_k)}{x_k^2-x_0^2} + \frac{p_0(x_0)}{x_0^2-x_k^2}\right)
			- \sum_{\substack{J\subseteq\underline r\\J\neq\emptyset}}
				\frac{\gamma^{|J|-1}p_{|J|-1}(x_0)}{\prod_{j\in J}(x_0^2-x_j^2)}.
\]
Resubstituting $x_0$ by $1$, we obtain for the first term by an elementary calculation
\[
	\sum_{k=1}^r\left(\frac{p_0(x_k)}{x_k^2-x_0^2} + \frac{p_0(x_0)}{x_0^2-x_k^2}\right)
		= \sum_{k=1}^r 2 = 2r.
\]
Using $p_m(1) = 2(1-\alpha)(1-\beta)(\gamma-2)^m$ in the second term, we find
\begin{equation*}
	\sum_{\substack{J\subseteq\underline r\\J\neq\emptyset}}
			\frac{\gamma^{|J|-1}p_{|J|-1}(x_0)}{\prod_{j\in J}(1-x_j^2)}
	 = \frac{2(1-\alpha)(1-\beta)}{\prod_{k=1}^n (1-x_k^2)}
	 		\sum_{\substack{J\subseteq\underline r\\J\neq\emptyset}}
	 			(\gamma(\gamma-2))^{|J|-1}\prod_{j\in\underline r\setminus J}(1-x_j^2).
\end{equation*}
Now if $\gamma\in\{0,2\}$ only the summands for $|J|=1$ survive and the claimed formula follows. For $\gamma\neq0,2$ we find
\begin{multline*}
	\frac{2(1-\alpha)(1-\beta)}{\prod_{k=1}^n (1-x_k^2)}
	 		\sum_{\substack{J\subseteq\underline r\\J\neq\emptyset}}
	 			(\gamma(\gamma-2))^{|J|-1}\prod_{j\in\underline r\setminus J}(1-x_j^2)\\
	 = \frac{2(1-\alpha)(1-\beta)}{\gamma(\gamma-2)}\left(-1 + 
	 		\frac{\sum_{J\subseteq\underline r}
	 			(\gamma(\gamma-2))^{|J|}\prod_{j\in\underline r\setminus J}(1-x_j^2)}{\prod_{k=1}^n (1-x_k^2)}\right)
\end{multline*}
Finally, since
\[
	\sum_{J\subseteq\underline r}(\gamma(\gamma-2))^{|J|}
	\prod_{j\in\underline r\setminus J}(1-x_j^2)
	= \prod_{k=1}^r \big(\gamma(\gamma-2) + (1 - x_k^2)\big)
	= \prod_{k=1}^r ((\gamma-1)^2 - x_k^2),
\]
it follows that
\[
	d(x) = 2r + \frac{2(1-\alpha)(1-\beta)}{\gamma(\gamma-2)}
				\left(1-\prod_{k=1}^r\frac{(\gamma-1)^2 - x_k^2}{1-x_k^2}\right).
\]
This completes the proof.
\end{proof}

\begin{lemma}
	Let $y_1,\ldots, y_N$ be pairwise distinct real numbers. If $m<N-1$, then
	\[
		\sum_{k=1}^N\frac{y_k^m}{\prod_{j\neq k} (y_k-y_j)} = 0.
	\]
\end{lemma}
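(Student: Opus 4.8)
The plan is to recognize the sum as the leading coefficient of a Lagrange interpolation polynomial and then to exploit the exactness of interpolation on low-degree polynomials. Concretely, I would introduce
\[
 L(x) := \sum_{k=1}^N y_k^m \prod_{j\neq k}\frac{x-y_j}{y_k-y_j},
\]
the polynomial of degree at most $N-1$ that interpolates the function $x\mapsto x^m$ at the $N$ distinct nodes $y_1,\dots,y_N$. The sum in question is exactly the coefficient of $x^{N-1}$ in $L$, since each factor $\prod_{j\neq k}(x-y_j)$ is monic of degree $N-1$, and the remaining scalar prefactor of the $k$-th summand is $y_k^m/\prod_{j\neq k}(y_k-y_j)$.

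First I would verify the interpolation property $L(y_i)=y_i^m$ for all $i$, which is immediate from $\prod_{j\neq k}\frac{y_i-y_j}{y_k-y_j}=\delta_{ik}$. Since $m<N-1$, the target function $x\mapsto x^m$ is itself a polynomial of degree at most $N-1$, and it agrees with $L$ at the $N$ distinct points $y_1,\dots,y_N$; hence $L(x)=x^m$ identically. Comparing the coefficients of $x^{N-1}$ on both sides, the right-hand side contributes $0$ because $m<N-1$, while the left-hand side contributes precisely $\sum_{k=1}^N y_k^m/\prod_{j\neq k}(y_k-y_j)$. This forces the sum to vanish, as claimed.

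There is no genuine obstacle here; the only point requiring care is the degree bookkeeping, namely checking that both the interpolant and the target have degree at most $N-1$, so that agreement at $N$ nodes forces equality, and that the polynomials $\prod_{j\neq k}(x-y_j)$ are monic, so that the asserted sum really is the top coefficient of $L$. As an alternative one could argue by residues: the sum equals the total residue of the rational function $x^m/\prod_{j=1}^N(x-y_j)$ at its simple poles $y_k$, and since the denominator exceeds the numerator in degree by $N-m\geq 2$, the residue at infinity vanishes, whence the sum of the finite residues is zero. Either route is short, and I would present the interpolation argument, since it avoids complex analysis and keeps everything over $\RR$.
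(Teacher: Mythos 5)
Your proof is correct. It uses the same underlying tool as the paper --- exactness of Lagrange interpolation on polynomials of degree at most $N-1$ --- but extracts the conclusion by a genuinely different mechanism, and your variant is arguably cleaner. The paper expands the auxiliary polynomial $p(x)=-x^{m+1}$ in the Lagrange basis $\ell_k$ and evaluates at $x=0$; untangling $\ell_k(0)$ then yields the desired sum only after dividing out the factor $\prod_{j=1}^N(-y_j)$, which forces the extra (and only briefly justified) reduction ``we may assume that $y_j\neq 0$ for all $j$.'' You instead interpolate $x^m$ itself, observe that the interpolant must equal $x^m$ identically since both are polynomials of degree at most $N-1$ agreeing at $N$ distinct nodes, and read the sum off as the coefficient of $x^{N-1}$, which vanishes precisely because $m<N-1$. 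This avoids both the shift of exponent and any nonvanishing hypothesis on the nodes, and it works verbatim over any field --- a genuine advantage here, since in the application in Appendix~A the lemma is invoked with complex nodes (the quantities $x_j^2$ with $x\in\CC^r$), even though the lemma is stated for real numbers. Your residue alternative is equally valid (the residue at infinity vanishes since the degree gap is $N-m\geq 2$) and is really the same top-coefficient count in disguise; your choice to present the interpolation argument is sensible.
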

\begin{proof}
Let 
\[
	\ell_k(x):=\prod_{j\neq k}\frac{x-y_j}{y_k-y_j}
\]
be the $k$'th Lagrange polynomial. Then, $\ell_k(y_j) = \delta_{kj}$ and hence
\[
	\mbox{span}\{\ell_k\,|\,k=1,\ldots,N\}= \RR[X]_{\leq N-1},
\]
the space of polynomials of degree $\leq N-1$. For any $p\in\RR[X]_{\leq N-1}$ we obtain
\[
	p(x) = \sum_{k=1}^N p(y_k)\,\ell_k(x).
\]
Applied to $p(x):=-x^{m+1}$ with $m<N-1$ and evaluated at $x=0$, this yields
\[
	0 = \sum_{k=1}^N \frac{y_k^m}{\prod_{j\neq k} (y_k-y_j)}\cdot\prod_{j=1}^N (-y_j).
\]
We may assume that $y_j\neq 0$ for all $j$ and conclude the statement.
\end{proof}

\bibliographystyle{amsplain}
\providecommand{\bysame}{\leavevmode\hbox to3em{\hrulefill}\thinspace}
\providecommand{\MR}{\relax\ifhmode\unskip\space\fi MR }
\providecommand{\MRhref}[2]{\href{http://www.ams.org/mathscinet-getitem?mr=#1}{#2}}
\providecommand{\href}[2]{#2}

\end{document}